\theoremstyle{remark}
\theoremstyle{definition}
\newtheorem{definition}{\bf Definition}[section]
\theoremstyle{plain}
\newtheorem{theorem}[definition]{Theorem}
\newtheorem{lemma}[definition]{Lemma}
\newtheorem{corollary}[definition]{Corollary}
\newtheorem{proposition}[definition]{Proposition}
\theoremstyle{remark}
\newtheorem{examples}[definition]{\bf Examples}
\newtheorem{problems}[definition]{\bf Problems}
\newtheorem{remark}[definition]{Remark}
\newcommand{\NA}{\mathbb{N}}
\newcommand{\cont}{\subseteq}
\newcommand{\set}[1]{\left\{#1\right\}}
\newcommand{\jn}[1]{\overline{#1}}
\newcommand{\dom}[1]{\mathop{dom}(#1)}
\begin{document}

\title{$n$--Arc Connected Spaces}
\author{Benjamin Espinoza, Paul Gartside, Ana Mamatelashvili.}
\address[Benjamin Espinoza]{Department of Mathematics\\ University of Pittsburgh at Greensburg\\
236 Frank A. Cassell Hall\\
150 Finoli Drive\\
Greensburg, PA 15601\\ USA}
\email{bee1@pitt.edu}
\address[Paul Gartside]{Department of Mathematics\\ University of Pittsburgh\\
508 Thackeray Hall\\
Pittsburgh, PA 15260\\ USA}
\email{gartside@math.pitt.edu}
\address[Ana Mamatelashvili]{Department of Mathematics\\ University of Pittsburgh\\
301 Thackeray Hall\\
Pittsburgh, PA 15260\\ USA}
\email{anm137@pitt.edu@math.pitt.edu}

\keywords{Arcwise connected, Borel hierarchy,  long line, n-arc connected, finite graph.}

\subjclass[2010]{Primary 54F15.; Secondary 54D05, 54F15, 54H05.}

\begin{abstract} A space is {\it $n$--arc connected} ($n$--ac) if any family of no more than $n$--points are contained in an arc. For graphs the following are equivalent: (i) $7$--ac, (ii) $n$--ac for all $n$, (iii) continuous injective image of a closed sub--interval of the real line, and (iv) one of a finite family of graphs. General continua that are $\aleph_0$--ac are characterized. The complexity of characterizing $n$--ac graphs for $n=2,3,4,5$ is determined to be strictly higher than that of the stated characterization of $7$--ac graphs. 
\end{abstract}

\maketitle

\section{Introduction}

A topological space $X$ is called {\it $n$--arc connected} ($n$--ac) if for any points $p_1, p_2, \dots, p_n$ in $X$, there
exists an arc $\alpha$ in $X$ such that $p_1, p_2, \dots p_n$ are all in $\alpha$. If a space is $n$--ac for all $n\in \NA$, then we will say that it is {\it $\omega$--ac}. Note that this is equivalent to saying that for any finite $F$ contained in $X$ there is an arc $\alpha$ in $X$ containing $F$. Call a space $\aleph_0$--ac if for every countable subset, $S$, there is an arc containing $S$.  Evidently a space is arc connected if and only if it is $2$--ac, and `$\aleph_0$--ac' implies `$\omega$--ac' implies `$(n+1)$--ac' implies `$n$--ac' (for any fixed $n$). 

Thus we have a family of natural strengthenings of arc connectedness, and the main aim of this paper is to characterize when `nice' spaces have one of these strong arc connectedness properties. Secondary aims are to distinguish `$n$--ac' (for each $n$), `$\omega$--ac' and `$\aleph_0$--ac', and to compare and contrast the familiar arc connectedness (i.e. $2$--ac) with its strengthenings.

Observe that any Hausdorff image of an $n$--ac (respectively, $\omega$--ac, $\aleph_0$--ac) space under a continuous injective map is also $n$--ac (respectively, $\omega$--ac, $\aleph_0$--ac). 
Below, unless explicitly stated otherwise, all spaces are (metrizable) continua. 

It turns out that  `sufficiently large' (in terms of dimension) arc connected spaces tend to be $\omega$--ac. Indeed it is not hard to see that manifolds (with or without boundary) of dimension at least $2$ are $\omega$--ac. Thus we focus on curves ($1$--dimensional continua) and especially on graphs (those connected spaces obtained by taking a finite family of arcs and then identifying some of the endpoints).

To motivate our main results consider the following examples.

\newpage

\begin{examples}{\label{exs}}  \ 

\begin{itemize}
\item[(A)] The arc (the closed unit interval, $I=[0,1]$) is $\aleph_0$--ac.

\item[(B)] The  open interval, $(0,1)$; and ray, $[0,1)$, are $\omega$--ac.

\item[(C)] From (A) and (B), all continua which are the continuous injective images of the arc, open interval and ray are $\omega$--ac. It is easy to verify that these include: (a) the arc, (b) the circle, (c)  figure eight curve, (d) lollipop, (e) dumbbell and (f) theta curve.

\begin{tabular}{cccccc}
(a) & (b) & (c) & (d) & (e) & (f) \\
\begin{tikzpicture}
 [vertex/.style={circle,draw=blue!50,fill=blue!20,thick, inner sep=0mm, minimum size=1mm},
  point/.style={coordinate}]

\node at (0,0) {};
\node  at (0,2) {};

\node[vertex] (arc_bot) at (0,.5) {};
\node[vertex] (arc_top) [above=of arc_bot] {}; 
\draw [thick] (arc_bot) -- (arc_top);
\end{tikzpicture}
&
\begin{tikzpicture}
 [vertex/.style={circle,draw=blue!50,fill=blue!20,thick, inner sep=0mm, minimum size=1mm},
  point/.style={coordinate}]
\node at (0,0) {};
\node  at (0,2) {};

\node[vertex] at (0,1) (circle_pt)  {};
\node[point] (pt)  [right=of circle_pt] {};

\draw  (circle_pt.center) to [out=90, in=90,looseness=2.5] (pt.center);
\draw   (pt.center) to [out=270, in=270, looseness=2.5] (circle_pt.center);

\node[vertex] at (circle_pt) {};


\end{tikzpicture}
&
\begin{tikzpicture}
 [vertex/.style={circle,draw=blue!50,fill=blue!20,thick, inner sep=0mm, minimum size=1mm},
  point/.style={coordinate}]

\node[vertex] (center_pt)  {};
\node[point] (top_pt)  [above=of center_pt] {};
\node[point] (bot_pt)  [below=of center_pt] {};

\draw  (center_pt.center) to [out=-10, in=-10,looseness=1.5] (top_pt);
\draw   (top_pt.center) to [out=170, in=170, looseness=1.5] (center_pt.center);

\draw  (center_pt.center) to [out=-10, in=-10,looseness=1.5] (bot_pt);
\draw   (bot_pt.center) to [out=170, in=170, looseness=1.5] (center_pt.center);



\end{tikzpicture}
&
\begin{tikzpicture} [vertex/.style={circle,draw=blue!50,fill=blue!20,thick, inner sep=0mm, minimum size=1mm}, point/.style={coordinate}]

\node[vertex] (lollipop_bot) {};
\node[vertex] (lollipop_mid) [above=of lollipop_bot] {}; 
\node[point] (pt) [above=of lollipop_mid] {};

\draw  (lollipop_bot.north)  -- (lollipop_mid.south);
\draw  (lollipop_mid.east) to [out=10,in=10,looseness=1.5] (pt);
\draw   (pt) to [out=170, in=170,looseness=1.5] (lollipop_mid.west);
\end{tikzpicture}
&
\begin{tikzpicture}
 [vertex/.style={circle,draw=blue!50,fill=blue!20,thick, inner sep=0mm, minimum size=1mm},
  point/.style={coordinate}]
\node at (0,0) {};
\node  at (0,2) {};

\node[vertex] at (0,1) (dumbbell_left) {};
\node[vertex] (dumbbell_right) [right=of dumbbell_left] {}; 
\node[point] (pt1) [left=of dumbbell_left] {};
\node[point] (pt2) [right=of dumbbell_right] {};

\draw  (dumbbell_left.east)  -- (dumbbell_right.west);
\draw  (dumbbell_left.north) to [out=90, in=90,looseness=1.5] (pt1.north);
\draw   (pt1.south) to [out=270, in=270,looseness=1.5] (dumbbell_left.south);
\draw  (dumbbell_right.north) to [out=90, in=90,looseness=1.5] (pt2.north);
\draw   (pt2.south) to [out=270, in=270,looseness=1.5] (dumbbell_right.south);
\end{tikzpicture}
&
\begin{tikzpicture}
 [vertex/.style={circle,draw=blue!50,fill=blue!20,thick, inner sep=0mm, minimum size=1mm},
  point/.style={coordinate}]

\node at (0,0) {};
\node  at (0,2) {};

\node[vertex] at (0,1) (theta_left) {};
\node[vertex] (theta_right) [right=of theta_left] {};

\draw (theta_left.east) to (theta_right.west) {};
\draw  (theta_left.north) to [out=90, in=90,looseness=2.5] (theta_right.north);
\draw (theta_right.south) to [out=270, in=270,looseness=2] (theta_left.south);
\end{tikzpicture}
\end{tabular}

\item[(D)] The Warsaw circle; double Warsaw circle; Menger cube; and Sierpinski triangle, are $\omega$--ac.

\item[(E)] The simple triod  is $2$--ac but not $3$--ac.
It is minimal in the sense that no graph with strictly fewer edges is $2$--ac not $3$--ac.

The graphs (a), (b) and~(c)  below are: $3$--ac but not $4$--ac, $4$--ac but not $5$--ac, and
$5$--ac but not $6$--ac, respectively. All are minimal.
\begin{center}
\begin{tabular}{ccc}
(a) & (b) & (c)  \\

\begin{tikzpicture}[vertex/.style={circle,draw=blue!50,fill=blue!20,thick, inner sep=0mm, minimum size=1mm},
  point/.style={coordinate}]

\draw (0,0) ellipse (8mm and 8mm);
\node[vertex] (v1) at (180:12mm) {};
\node[vertex] (v2) at (0:12mm) {};
\node[vertex] (v3) at (180:8mm) {};
\node[vertex] (v4) at (0:8mm) {};

\draw (v1) -- (v3);
\draw (v2) -- (v4);
\end{tikzpicture}
&
\begin{tikzpicture}[vertex/.style={circle,draw=blue!50,fill=blue!20,thick, inner sep=0mm, minimum size=1mm},
  point/.style={coordinate}]
\node at (-12mm,0) {};
\node  at (12mm,0) {};

\draw (0,0) circle (8mm);
\node[vertex] (v1) at (60:8mm) {};
\node[vertex] (v2) at (180:8mm) {};
\node[vertex] (v3) at (300:8mm) {};
\draw (v1) -- (v2) -- (v3);

\end{tikzpicture}
&
\begin{tikzpicture}[vertex/.style={circle,draw=blue!50,fill=blue!20,thick, inner sep=0mm, minimum size=1mm},
  point/.style={coordinate}]

\node at (-12mm,0) {};
\node  at (12mm,0) {};

\draw (0,0) circle (8mm);

\node[vertex] (v1) at (0:8mm) {};
\node[vertex] (v2) at (120:8mm) {};
\node[vertex] (v3) at (240:8mm) {};

\draw (0,0) -- (v1);
\draw (0,0) -- (v2);
\draw (0,0) -- (v3);

\node[vertex] (v0) at (0,0) {};
\end{tikzpicture}
\end{tabular}
\end{center}


\item[(F)] The Kuratowski graph $K_{3,3}$ is $6$--ac but not $7$--ac. It is also minimal.

\item[(G)] The graphs below are all $6$--ac and,
by Theorem~\ref{main1},  none is $7$--ac. Unlike  $K_{3,3}$ all are planar. It is 
unknown if the first of these graphs (which has $12$ edges) is minimal among planar graphs. A minimal example must have  at 
least nine edges.

\begin{center}
\begin{tabular}{cccc}
\begin{tikzpicture}[vertex/.style={circle,draw=blue!50,fill=blue!20,thick, inner sep=0mm, minimum size=1mm},
  point/.style={coordinate}]
\node at (-12mm,0) {};
\node  at (12mm,0) {};

\draw (0,0) circle (8mm);
\draw (0,0) circle (4mm);
\node[vertex] (v1) at (0:8mm) {};
\node[vertex] (v2) at (90:8mm) {};
\node[vertex] (v3) at (180:8mm) {};
\node[vertex] (v4) at (270:8mm) {};
\node[vertex] (v1i) at (0:4mm) {};
\node[vertex] (v2i) at (90:4mm) {};
\node[vertex] (v3i) at (180:4mm) {};
\node[vertex] (v4i) at (270:4mm) {};

\draw (v1i) -- (v1);
\draw (v3i) -- (v3);
\draw (v2i) -- (v2);
\draw (v4i) -- (v4);
\end{tikzpicture}
&
\begin{tikzpicture}[vertex/.style={circle,draw=blue!50,fill=blue!20,thick, inner sep=0mm, minimum size=1mm},
  point/.style={coordinate}]
\node at (-12mm,0) {};
\node  at (12mm,0) {};

\draw (0,0) circle (8mm);
\draw (0,0) circle (4mm);
\node[vertex] (v1) at (0:8mm) {};
\node[vertex] (v2) at (72:8mm) {};
\node[vertex] (v3) at (144:8mm) {};
\node[vertex] (v4) at (216:8mm) {};
\node[vertex] (v5) at (288:8mm) {};
\node[vertex] (v1i) at (0:4mm) {};
\node[vertex] (v2i) at (72:4mm) {};
\node[vertex] (v3i) at (144:4mm) {};
\node[vertex] (v4i) at (216:4mm) {};
\node[vertex] (v5i) at (288:4mm) {};

\draw (v1i) -- (v1);
\draw (v3i) -- (v3);
\draw (v2i) -- (v2);
\draw (v4i) -- (v4);
\draw (v5i) -- (v5);
\end{tikzpicture}
&
\begin{tikzpicture}[vertex/.style={circle,draw=blue!50,fill=blue!20,thick, inner sep=0mm, minimum size=1mm},
  point/.style={coordinate}]
\node at (-12mm,0) {};
\node  at (12mm,0) {};

\draw (0,0) circle (8mm);
\draw (0,0) circle (4mm);
\node[vertex] (v1) at (0:8mm) {};
\node[vertex] (v2) at (60:8mm) {};
\node[vertex] (v3) at (120:8mm) {};
\node[vertex] (v4) at (180:8mm) {};
\node[vertex] (v5) at (240:8mm) {};
\node[vertex] (v6) at (300:8mm) {};
\node[vertex] (v1i) at (0:4mm) {};
\node[vertex] (v2i) at (60:4mm) {};
\node[vertex] (v3i) at (120:4mm) {};
\node[vertex] (v4i) at (180:4mm) {};
\node[vertex] (v5i) at (240:4mm) {};
\node[vertex] (v6i) at (300:4mm) {};

\draw (v1i) -- (v1);
\draw (v3i) -- (v3);
\draw (v2i) -- (v2);
\draw (v4i) -- (v4);
\draw (v5i) -- (v5);
\draw (v6i) -- (v6);
\end{tikzpicture}
&
{
\begin{tikzpicture}
\node at (0,8mm) {};
\draw[thick, dotted]  (0,0) -- (0.5,0);
\node at (0,-8mm) {};
\end{tikzpicture}
}
\end{tabular}
\end{center}
\end{itemize}
\end{examples}

The diversity of examples in (D) of $\omega$--ac curves suggests that no simple characterization of these continua is likely. The authors, together with Kovan--Bakan, prove that there is indeed no characterization of $\omega$--ac curves any simpler than the definition, see \cite{egkm} for details.

This prompts us to restrict attention to the more concrete case of graphs, and leads us to the following natural problems.
\begin{problems} \ 

\begin{itemize}
\item[(1)] Characterize the $\omega$--ac graphs.
\item[(2)] Characterize the $\aleph_0$--ac graphs.
\item[(3)] Characterize, for each $n$, the graphs which are $n$--ac but not $(n+1)$--ac.
\end{itemize}
\end{problems}

In Section~\ref{omegag} below we show that the list of $\omega$--ac graphs given in (C) is complete, answering Problem~(1). 
\begin{theorem}{\label{main1}} 
For a graph $G$ the following are equivalent:

\begin{itemize}
\item[1)] $G$ is $7$--ac,
\item[2)] $G$ is $\omega$--ac,
\item[3)] $G$ is the continuous injective image of a sub--interval of the real line,
\item[4)] $G$ is one of the following graphs: the arc, simple closed curve, figure eight curve, lollipop, dumbbell or theta curve.
\end{itemize}
\end{theorem}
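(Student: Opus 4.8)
The plan is to prove the cycle $(4)\Rightarrow(3)\Rightarrow(2)\Rightarrow(1)\Rightarrow(4)$; all the work is in the last implication. For $(4)\Rightarrow(3)$ I would exhibit, for each of the six graphs, an explicit continuous injection onto it from $[0,1]$, $[0,1)$ or $(0,1)$ — the ``almost Eulerian'' traversals of Example~\ref{exs}(C): the arc is the image of $[0,1]$; the simple closed curve and the lollipop are wrap-arounds of the ray $[0,1)$; and the figure eight, dumbbell and theta curve are images of $(0,1)$, obtained from an Eulerian walk (one using each edge once, which exists since each of these graphs has at most two odd-degree vertices) by ``opening up'' its first and last vertex-visits into the two non-compact ends of the interval. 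Then $(3)\Rightarrow(2)$ follows from Example~\ref{exs}(A),(B) (every sub-interval of $\mathbb R$ is $\omega$--ac) together with the remark in the introduction that a continuous injective Hausdorff image of an $\omega$--ac space is $\omega$--ac, and $(2)\Rightarrow(1)$ is trivial.

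The heart of the matter is $(1)\Rightarrow(4)$, which I would prove contrapositively: if $G$ is not one of the six graphs, then some seven points of $G$ lie on no common arc. The key is a simple observation about arcs. Replace $G$ by its topologically equivalent reduced multigraph $\widehat G$ (suppress all degree-$2$ vertices). An arc $A\subseteq G$ meets each vertex of $\widehat G$ at most once, so the set of edges of $\widehat G$ whose interior meets $A$ spans a subgraph that is a simple path together with at most two further edges, one at each end of $A$ (a loop, a chord back onto the path, or an edge running off to a new vertex); hence this ``touched'' subgraph has first Betti number at most $2$. Consequently, if $S$ is any edge set of $\widehat G$ spanning a subgraph of first Betti number at least $3$, then putting one point in the interior of each edge of $S$ already yields points on no common arc.

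The argument then splits on the homotopy type of $G$. If $\beta_1(G)\le 2$, a direct inspection of the core of $G$ (obtained by deleting pendant arcs) together with the pendant structure — using that a finite graph with all degrees $\ge 3$ and first Betti number $\le 2$ has at most two vertices — either identifies $G$ as one of the six, or exhibits a ``rigid triod'' in $G$: a pendant arc $P$ ending at a point $v$ from which at least two essentially distinct continuations run, so that forcing the free end of $P$ to be an endpoint of an arc commits the arc to a single one of those continuations, and two suitably placed points then defeat every arc; in this case $G$ already fails to be $3$--ac or $4$--ac. The decisive case is $\beta_1(G)\ge 3$. Here $\widehat G$, or its core, contains a minimal subgraph of first Betti number $3$ which, after suppressing its own degree-$2$ vertices, is one of finitely many multigraphs with at most four vertices and six edges (the three-petal rose, the multi-edge theta curves, $K_4$, and so on); for each of these I would produce at most seven points in those edges lying on no common arc, using the Betti bound above together with, in the tight cases, a sharper count — e.g. an arc one of whose endpoints is a prescribed vertex of a copy of $K_{2,3}$ meets the interiors of at most five of its six edges, which is exactly what forces $K_{3,3}$ not to be $7$--ac once the two ``omitted'' edges of $K_{3,3}$ are chosen to share a vertex.

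I expect the main obstacle to lie in this $\beta_1\ge 3$ case, for two reasons. First, it is not enough to find a small subgraph that fails $7$--ac in isolation: an arc of the ambient $G$ can cross such a subgraph in several disjoint sub-arcs, so the seven points must be chosen so that \emph{every} arc of $G$ misses one of them, which is where the global bound on the Betti number of the touched subgraph has to be combined with the finer, endpoint-sensitive counting. Second, there is the bookkeeping needed to verify that the case analysis on $\widehat G$ is exhaustive — so that the six graphs listed are precisely those surviving all the obstructions and the threshold is exactly $7$, consistent with the assertions in Example~\ref{exs}(E),(F),(G) that $K_{3,3}$ and the planar graphs there are $6$--ac but not $7$--ac.
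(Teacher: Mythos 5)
Your implications $(4)\Rightarrow(3)\Rightarrow(2)\Rightarrow(1)$ match the paper, and your key observation for $(1)\Rightarrow(4)$ --- that the set of edges of the suppressed multigraph $\widehat G$ whose interiors an arc meets spans a simple path plus at most two further edges, hence a subgraph of first Betti number at most $2$ --- is correct and genuinely different from the paper's main tool. But both places where you lean on it have real gaps. In the case $\beta_1(G)\le 2$, your claim that a non-listed graph exhibits a ``rigid triod'' forcing failure of $3$--ac or $4$--ac is false: take the theta curve with one pendant edge attached at an interior point $w$ of one of its three edges. This graph has $\beta_1=2$, is not on the list, and the two ``continuations'' out of $w$ reconnect at both vertices of the theta, so an arc starting at the tip of the pendant can pass through $w$, run along one half of the subdivided edge, cross via a second edge of the theta, and return along the other half; checking cases shows this graph is $4$--ac (it fails $5$--ac, not $3$-- or $4$--ac). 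So two extra points never ``defeat every arc'' here, and your mechanism does not close for the whole family of $\beta_1\le 2$ graphs with three or more branch points.

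In the case $\beta_1(G)\ge 3$, the Betti bound yields non-$7$--ac only when $\widehat G$ has an edge set $S$ with $\beta_1(S)\ge 3$ and $|S|\le 7$. This already fails for $K_{3,3}$, where the smallest such $S$ has $8$ edges (any subgraph with $\beta_1=3$ must span all six vertices and be connected), and it fails whenever the three independent cycles pass through additional branch points of $G$: the cycles then decompose into many $\widehat G$--edges and, worse, an arc can exit the chosen subgraph at such a branch point and re-enter it elsewhere --- exactly the difficulty you flag but do not resolve. You would need a separate endpoint-sensitive argument for each of an in-principle unbounded family of configurations. The paper avoids both problems with a different engine: after deleting terminal edges, it proves Lemma~\ref{4ptsarc} (an arc through three points, one in each leg of a triod whose legs contain no branch points, must have one of the three as an endpoint and the center in its interior). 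Since an arc has only two endpoints, placing three such triods along a path through three branch points (Theorem~\ref{no7wac}), or around two branch points of degree four, gives a uniform seven-point obstruction that is insensitive to how the rest of the graph reconnects; together with the degree bound from the same lemma, a short case analysis on the reduced graph finishes. To salvage your route you would need a substitute for that lemma to control reconnection, at which point you would essentially be reproving Lemma~\ref{4ptsarc} and Theorem~\ref{no7wac}.
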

In Section~\ref{aleph0} we go on to characterize the $\aleph_0$--ac continua, giving a very strong solution to Problem~(2). 
\begin{theorem} {\label{main2}} 
For any continuum $K$ (not necessarily metrizable) the following are equivalent:
\begin{itemize}
\item[1)] $K$ is $\aleph_0$--ac,
\item[2)] $K$ is a continuous injective image of a closed sub--interval of the long line,
\item[3)] $K$ is one of: the arc, the long circle, the long lollipop, the long dumbbell, the long figure eight or the long theta-curve.
\end{itemize}
\end{theorem}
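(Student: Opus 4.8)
I would prove the equivalences through the cycle $(3)\Rightarrow(2)\Rightarrow(1)\Rightarrow(3)$. For $(3)\Rightarrow(2)$ one exhibits, for each of the six continua, a continuous bijection onto it from a closed subinterval of the long line $\mathbb L$: the arc is the identity image of $I$, and each of the five ``long'' continua arises from the closed long ray $\mathbb L_{\ge 0}$, or from $\mathbb L$ itself, by a ``wrapping'' construction parallel to the classical fact that $t\mapsto e^{2\pi i t}$ is a continuous bijection of $[0,1)$ onto the circle --- for instance the long circle is the image of $\mathbb L_{\ge 0}$ under the composite of its inclusion into the one-point compactification $\mathbb L^{*}$ with the quotient identifying $0$ and $\infty$. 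These verifications are routine, so I would only record the maps. For $(2)\Rightarrow(1)$, suppose $K=f(J)$ with $J$ a closed subinterval of $\mathbb L$ and $f$ continuous and injective; the decisive point is that every countable subset of $J$ is bounded in $J$ (the defining property of $\omega_1$) and hence lies in a closed bounded subinterval $[a,b]\subseteq J$, which is an ordinary arc. Given a countable $S\subseteq K$, choose such an $[a,b]$ containing $f^{-1}(S)$; since $[a,b]$ is compact and $f$ injective, the restriction of $f$ to $[a,b]$ is an embedding and $f([a,b])$ is an arc containing $S$. (Equivalently: closed subintervals of $\mathbb L$ are $\aleph_0$--ac, and by the remark in the introduction $\aleph_0$--ac passes to continuous injective images in Hausdorff spaces.)

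The substance of the theorem is $(1)\Rightarrow(3)$. If $K$ is separable --- in particular if it is metrizable --- take a countable dense $D\subseteq K$ and an arc $\alpha\supseteq D$; then $\alpha$ is closed and dense in $K$, so $K=\alpha$ is an arc. So assume $K$ is not separable. The heart of the argument is the structural claim that $K$ is a \emph{long graph}: it has only finitely many points of order $\ne 2$, and deleting them leaves finitely many open pieces, each an open arc or an open long arc. To prove this I would first show $K$ is locally connected --- if local connectedness failed at a point $p$, one selects a point from each of infinitely many small pieces clustering at $p$ and uses the order of $p$ together with arc connectedness to see that no arc can contain this countable set and $p$ --- and then show that the set $B$ of points of order $\ne 2$ is closed and discrete, hence finite by compactness, by a similar clustering argument that otherwise produces a countable set defeating $\aleph_0$--ac. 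Off $B$ the space is locally an arc with no branch points, so $K\setminus B$ is a finite disjoint union of open (long) arcs and $K$ is reassembled from their closures by gluing along $B$.

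Given $K$ as a long graph, it remains to identify it. The underlying finite graph of $K$ (collapse each long edge to an ordinary edge) must be one of the arc, simple closed curve, figure eight, lollipop, dumbbell or theta curve; this follows by analysing how arcs can traverse a long graph, parallel to --- and for the most part reducing to --- the proof of Theorem~\ref{main1}, using that $K$, being $\aleph_0$--ac, is $7$--ac and that the relevant forbidden configurations are detected near the (intrinsic) branch points. One then decides, for each of these six combinatorial types, which assignments of ``long'' or ``short'' to the edges give an $\aleph_0$--ac continuum and checks that the survivor is the named space. Non-separability rules out the all-short graphs, and the remaining finite case analysis hinges on the fact that \emph{a long end is never a cluster point of a countable set} (a countable subset of a long ray is bounded), so a branch point with its long edges arranged correctly can always be ``run past'' by an arc through a prescribed countable set, whereas any other arrangement fails $\aleph_0$--ac by the same ``surrounding'' obstruction that already shows the ordinary circle, lollipop and dumbbell are not $\aleph_0$--ac.

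The main obstacle, I expect, is the structural step: proving that a non-separable $\aleph_0$--ac continuum is a long graph --- establishing local connectedness, and the finiteness of the set of points of order $\ne 2$. Once that is in place, identifying the underlying graph largely imports Theorem~\ref{main1}, and selecting which edges must be long is a short combinatorial check whose only genuinely new ingredient is the long-end phenomenon noted above.
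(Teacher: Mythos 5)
Your cycle puts all the weight on $(1)\Rightarrow(3)$ via a structure theorem asserting that a non--separable $\aleph_0$--ac continuum is a ``long graph''; the paper takes a genuinely different route. It proves $(1)\Rightarrow(2)$ directly by Zorn's Lemma (Proposition~\ref{nonsepaleph0}): the continuous injective maps from closed subintervals of $L$ into $K$ are ordered by extension, chains are shown to have upper bounds, an extension lemma (Lemma~\ref{l3}) shows any such map omitting a point can be properly extended through it, and a maximal element is then forced to have unbounded domain and to be surjective. The identification $(2)\Rightarrow(3)$ is done by the Pressing Down Lemma (Proposition~\ref{11image}), which shows the image topology can differ from the order topology of the long ray at only one point, all of whose neighborhoods contain a tail. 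This completely sidesteps any local structure theory for $K$.

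The genuine gap in your proposal is exactly the step you flag as the main obstacle: the long--graph structure theorem is only gestured at, and the gestures do not survive the non--metrizable setting. Your local connectedness argument selects ``a point from each of infinitely many small pieces clustering at $p$'' and invokes ``the order of $p$'', but points of $K$ need not have countable neighborhood bases (so clustering cannot be handled with sequences or countably many pieces), and no finiteness of order at $p$ is available at that stage --- that is essentially what is being proved. More seriously, even granting local connectedness, the Menger--Urysohn order theory and Hahn--Mazurkiewicz machinery that convert ``order $2$ at every point off $B$'' into ``locally an arc'' are theorems about \emph{metrizable} continua; without them you cannot conclude that the components of $K\setminus B$ are $1$--manifolds, and even then you would need the classification of non--metrizable connected $1$--manifolds (real line, open long ray, long line) to get your list of edge types. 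Finally, the closing combinatorial check is subtler than the ``surrounding'' obstruction you cite: a theta curve with one ordinary edge and two long edges contains no ordinary circle or triod, yet fails to be $\aleph_0$--ac because any arc containing a dense subset of the short edge together with interior points of the long edges would have to traverse a compactified long ray, which is impossible for a (metrizable) arc. None of this refutes your strategy in principle, but as written its central step is missing, whereas the paper's Zorn--plus--Fodor argument is complete.
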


From Theorem~\ref{main1} we see that there are no examples of graphs that are $n$--ac but not $(n+1)$--ac, for $n \ge 7$, solving Problem~(3) in these cases. Of course Examples~(E)  and~(F)  show that there are $n$--ac not $(n+1)$--ac graphs for $n=2,3,4,5$ and $6$. But the question remains: can we characterize these latter graphs? Informally our answers are as follows.
\begin{theorem} {\label{main3}} \ 

\begin{itemize}
\item[1)] The characterization of $\omega$--ac graphs given in Theorem~\ref{main1} is as simple as possible,
\item[2)] there exist reasonably simple characterizations of $n$--ac not $(n+1)$--ac graphs for $n \le 7$, 
\item[3)] for $n=2,3,4$ and $5$, there is no possible characterization of $n$--ac not $(n+1)$--ac graphs which is as simple as that for $\omega$--ac graphs.
\end{itemize}
\end{theorem}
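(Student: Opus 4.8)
The plan is to make the three informal assertions of Theorem~\ref{main3} precise as statements about the position, in the Borel hierarchy, of certain subsets of a hyperspace of graphs, and then to prove matching upper and lower complexity bounds. Fix a compact metrizable space $M$ containing a homeomorphic copy of every graph --- the Menger curve will do --- and work in the hyperspace $C(M)$ of subcontinua of $M$ with the Vietoris (equivalently Hausdorff) topology, itself a compact metrizable space. Let $\mathcal G\subseteq C(M)$ be the set of subcontinua that are graphs, and for a homeomorphism--invariant property $P$ put $\mathcal G_P=\{G\in\mathcal G:G\text{ has }P\}$; write $\mathcal G_n$ and $\mathcal G_\omega$ for the cases $P=$``$n$--ac'' and $P=$``$\omega$--ac''. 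Since $n$--arc connectedness of a graph depends only on its combinatorial type, each $\mathcal G_P$ is a union of countably many homeomorphism classes. We agree that a characterization of the $P$--graphs is \emph{as simple as} one of the $Q$--graphs when $\mathcal G_P$ and $\mathcal G_Q$ lie in the same (finite) Borel class, and is \emph{as simple as possible} when $\mathcal G_P$ is moreover complete for that class. Then Theorem~\ref{main3} unwinds to: (1$'$) $\mathcal G_\omega$ is $\mathbf\Pi^0_3$--complete; (2$'$) for every $n\le 7$ the set $\mathcal G_n\setminus\mathcal G_{n+1}$ is Borel of bounded rank (for $n=7$ it is empty, and for $n=6$ it is cut out by a finite list of graphs and one--parameter families); (3$'$) for $n\in\{2,3,4,5\}$ the set $\mathcal G_n\setminus\mathcal G_{n+1}$ is $\mathbf\Sigma^0_4$--hard, hence not $\mathbf\Pi^0_3$.

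For the upper bound in (1$'$), I would first show that for every finite graph $H$ the homeomorphism class $[H]=\{G\in\mathcal G:G\cong H\}$ is $\mathbf\Pi^0_3$: being a graph homeomorphic to $H$ is the conjunction of finitely many conditions of the shapes ``$X$ is a Peano continuum'', ``$X$ has exactly $m_d$ points of order $d$'' (for the finitely many relevant $d$), and ``the closures of the complementary arcs of the branch points realise the incidence pattern of $H$''; local connectedness and ``$\operatorname{ord}(x)=d$'' are $\mathbf\Pi^0_3$, and the remaining clauses add only finitely many quantifiers ranging over the compact spaces $X$ and $C(M)$, which by compactness do not raise the class. As a finite union of such classes, $\mathcal G_\omega=\mathcal G_7$ --- which by Theorem~\ref{main1} consists exactly of the arc, circle, figure eight, lollipop, dumbbell and theta--curve --- is $\mathbf\Pi^0_3$, and the same bookkeeping applied to the (finite, for $n=6,7$) data of the combinatorial characterizations extracted from the analysis behind Theorem~\ref{main1} and Examples~\ref{exs}(E)--(G) gives (2$'$); for $n\le 5$ the characterization is a bounded--rank Borel condition of a less finitary but still structured kind. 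The hardness half of (1$'$) uses the known fact that $[\text{arc}]$ is $\mathbf\Pi^0_3$--complete in $C(M)$: the standard reduction --- essentially, grow an arc carrying a null sequence of potential spurs indexed by the input --- produces only arcs and non--arc graphs possessing a branch point, hence never any of the six $\omega$--ac graphs, so it witnesses the $\mathbf\Pi^0_3$--hardness of $\mathcal G_\omega$ as well.

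The substance is (3$'$). For each $n\in\{2,3,4,5\}$ I would exhibit a continuous reduction of a fixed $\mathbf\Sigma^0_4$--complete set $A\subseteq\omega^\omega$ --- say $A=\{x:\exists i\,(x_i\in P)\}$ with $P$ a fixed $\mathbf\Pi^0_3$--complete subset of $\omega^\omega$ --- to $\mathcal G_n\setminus\mathcal G_{n+1}$. The reduction $x\mapsto G_x$ is assembled from gadgets: fix a base graph $B$ that is $n$--ac but not $(n+1)$--ac and carries a distinguished arc along which further structure can be attached without disturbing the $(n+1)$--ac obstruction already in $B$; using boundedly many nested copies of the arc--hardness construction, build a parametrized gadget $\gamma(y)$ --- a small graph, or a null sequence of small graphs shrinking to a point --- that is \emph{inert}, meaning its attachment leaves the host $n$--ac, exactly when $y\in P$, and that otherwise forces a configuration destroying $n$--arc connectedness (for instance one branch point too many for level $n$); and glue $\gamma(x_i)$ along the $i$--th term of a null sequence of disjoint subarcs of $B$. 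Then $x\mapsto G_x$ is continuous into $C(M)$; if $x\in A$ then, once the inert gadgets are absorbed, $G_x$ is a genuine graph that is still $n$--ac and, thanks to $B$, never $(n+1)$--ac; if $x\notin A$ then $G_x$ either fails to be a graph (infinitely many non--inert gadgets survive) or is a graph carrying a fatal configuration, so in either case $G_x\notin\mathcal G_n$. Hence $G_x\in\mathcal G_n\setminus\mathcal G_{n+1}$ iff $x\in A$, so $\mathcal G_n\setminus\mathcal G_{n+1}$ is $\mathbf\Sigma^0_4$--hard, and with the upper bound it is $\mathbf\Sigma^0_4$--complete, in particular not $\mathbf\Pi^0_3$ --- which is (3$'$).

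The main obstacle is the gadget design in (3$'$): for each of the four values of $n$ one must produce a single combinatorial device whose inertness is genuinely $\mathbf\Pi^0_3$--complete in its parameter --- this is where the nested arc--hardness reductions enter, and where the finiteness of graphs forces those nestings to have bounded depth but unbounded width --- while at the same time its non--inert form yields precisely an obstruction to level $n$ and to no higher level, so that the $n$--ac/not--$(n+1)$--ac status of the whole assembly is controlled coordinate by coordinate and the Borel class comes out exactly $\mathbf\Sigma^0_4$. This last feature also explains why $n=6$ is excluded from part~(3): there (as when $n=7$) the non--inert behaviour is realised by only finitely many configurations, so a finite description of $\mathcal G_6\setminus\mathcal G_7$ is available --- this is part~(2) --- and the hardness argument above has nothing on which to run. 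What part~(3) records is precisely that no such finite description is possible for $n\le5$, equivalently the $\mathbf\Sigma^0_4$--hardness just sketched.
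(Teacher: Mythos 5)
Your formalizations of parts (1) and (2) are broadly in line with the paper's (Theorem~\ref{main3formal}, Lemmas~\ref{omega_c} and~\ref{between}): $\omega$--ac graphs form a $\mathbf{\Pi}_3$--complete set, and any union of homeomorphism classes of graphs is Borel of low rank. But your formalization and proof strategy for part (3) contain a genuine error: you claim $\mathcal G_n\setminus\mathcal G_{n+1}$ is $\mathbf{\Sigma}^0_4$--hard (indeed $\mathbf{\Sigma}^0_4$--complete) for $n=2,3,4,5$. This is impossible, and it contradicts the upper bound. The paper proves (Lemma~\ref{between}, using the $G$--like machinery of Camerlo--Darji--Marcone and Kato--Ye) that \emph{every} family of homeomorphism classes of graphs is in $D_2(\Sigma_3)$, the class of intersections of one $\Pi_3$ set with one $\Sigma_3$ set. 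Since $\Pi_3\cup\Sigma_3\subseteq\Delta_4$, every $D_2(\Sigma_3)$ set is $\Delta_4=\Sigma^0_4\cap\Pi^0_4$; a $\mathbf{\Sigma}^0_4$--hard set cannot be $\Pi^0_4$ (else the hierarchy would collapse at level $4$), so no set of graphs closed under homeomorphism can be $\mathbf{\Sigma}^0_4$--hard. Consequently the reduction you sketch --- attaching a null sequence of gadgets whose ``inertness'' is $\Pi^0_3$--complete in the parameter, and then existentially quantifying over the gadgets to reach $\exists i\,(x_i\in P)$ --- cannot work: the requirement that the output be a graph forces all but finitely many gadgets to degenerate, and this structural constraint is exactly what caps the complexity at $D_2(\Sigma_3)$, one quantifier short of $\Sigma^0_4$.

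The correct target, and the one the paper proves, is $D_2(\Sigma_3)$--\emph{completeness}: this already suffices for the informal claim in part (3), since a $D_2(\Sigma_3)$--complete set is not $\Pi_3$ (otherwise the $D_2(\Sigma_3)$--complete set $S_3^*\times P_3$ would be $\Pi_3$), whereas the set of $\omega$--ac graphs is $\Pi_3$. The paper's hardness witness (Proposition~\ref{n_c}) is a continuous map $F:\bigl(2^{\mathbb N\times\mathbb N}\bigr)^2\to C(I^N)$ with $F^{-1}(AC_n)=S_3^*\times P_3$: a fixed base graph $K_0$ that is $5$--ac but not $6$--ac carries two independent null families of gadgets, one ($R_j$, spurs that either survive or collapse into the boundary) testing the $\Sigma_3$ condition $\alpha\in S_3^*$, the other ($S_j$, which either close up into arcs or accumulate into a topologist's sine curve) testing the $\Pi_3$ condition $\beta\in P_3$; the conjunction of one $\Sigma_3$ and one $\Pi_3$ test is precisely what realizes $D_2(\Sigma_3)$--hardness, and the cases $n=2,3,4$ follow by attaching extra edges to the base. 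So the architecture of your gadget idea is salvageable, but it must encode a conjunction of a single $\Sigma_3$ test and a single $\Pi_3$ test, not an infinite disjunction of $\Pi_3$ tests.
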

In Section~\ref{cplxty} we outline some machinery from descriptive set theory which allows us to formalize and prove these claims.
The situation with $n=6$ --- the complexity of characterizing  graphs which are $6$--ac but not $7$--ac --- remains unclear. This, and other remaining open problems, are discussed in Section~\ref{probs}.

\section{Characterizations}
In this section we prove the characterization theorems stated in the Introduction. First Theorem~\ref{main1} characterizing $\omega$--ac graphs. Second Theorem~\ref{main2} characterizing $\aleph_0$--ac continua. 

\subsection{$\omega$--ac Graphs}\label{omegag}

As noted in Example~(C) the graphs listed in part 4) of Theorem~\ref{main1} are all the continuous injective image of a closed sub--interval of the real line, giving 4) implies 3), and all such images are $\omega$--ac, yielding 3) implies 2) of Theorem~\ref{main1}. Clearly $\omega$--ac graphs are $7$--ac, and so 2) implies 1) in Theorem~\ref{main1}. 

It remains to show 1) implies 4) in Theorem~\ref{main1}, in other words that any $7$--ac graph is one of the graphs listed in 4). This is established in Theorem~\ref{ins} below. We proceed by establishing an ever tightening sequence of restrictions on the structure of $7$--ac graphs.

We note that Lelek and McAuley, \cite{lelek}, showed that the only Peano continua which are continuous injective images of the real line are the figure eight, dumbbell and theta--curve. Their proof can be modified to establish the equivalence of 3) and 4) in Theorem~\ref{main1}. 

\begin{proposition}
Let $G$ be a finite graph, and let $H \subset G$ be a subgraph of $G$ such that 
$G-H$ is connected and $\overline{G-H}\cap H = r$ is a branch point of $G$.
If $G$ is $n$--ac, then $\overline{G-H}$ is $n$--ac.
\label{complement}
\end{proposition}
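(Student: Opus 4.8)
The plan is to set $K=\overline{G-H}$ and prove directly that $K$ is $n$--ac: given $n$ points of $K$, I would produce an arc of $G$ through them (available because $G$ is $n$--ac), trim it so that its two endpoints lie in $K$, and then show that the trimmed arc cannot leave $K$ at all. The preliminary observation is purely set-theoretic. Since $H$ is a closed subgraph, $G-H$ and $H$ partition $G$; combined with $\overline{G-H}\cap H=r$ this gives $K=(G-H)\cup\{r\}$ and $G\setminus K=H\setminus\{r\}$, so that $K$ is closed in $G$ and meets $H$ only at $r$. (The hypotheses that $G-H$ is connected and that $r$ is a branch point of $G$ appear to be present so that $K$ is again a graph, with $r$ among its vertices, which is what makes the proposition usable as an inductive step; they play no further role in the argument I have in mind.)

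Next, fix $p_1,\dots,p_n\in K$ and use $n$--ac of $G$ to choose an arc $\alpha\colon[0,1]\to G$ with $p_1,\dots,p_n\in\alpha([0,1])$. Since $K$ is closed, $\alpha^{-1}(K)$ is closed, and it is nonempty since it contains every $\alpha^{-1}(p_i)$; I would put $s=\min\alpha^{-1}(K)$, $t=\max\alpha^{-1}(K)$ and let $\beta=\alpha|_{[s,t]}$. Then $\beta$ is a sub-arc of $\alpha$ whose endpoints $\beta(s),\beta(t)$ lie in $K$, and it still passes through every $p_i$ (each $\alpha^{-1}(p_i)$ lies between $s$ and $t$). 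If $s=t$ all the $p_i$ coincide and the conclusion is trivial, so I may assume $s<t$.

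The crux is to show $\beta([s,t])\subseteq K$, which finishes the proof. Consider the open set $\beta^{-1}(G\setminus K)=\beta^{-1}(H\setminus\{r\})$; because $\beta(s),\beta(t)\in K$ it contains neither $s$ nor $t$, so it is a union of intervals $(c,d)$ with $s\le c<d\le t$. Fix one such $(c,d)$. The endpoints $c,d$ are not in $\beta^{-1}(G\setminus K)$, hence $\beta(c),\beta(d)\in K$; on the other hand $\beta(c)$ and $\beta(d)$ are limits of points of $\beta((c,d))\subseteq H\setminus\{r\}$, hence lie in $H$ since $H$ is closed. Therefore $\beta(c),\beta(d)\in H\cap K=\{r\}$, so $\beta(c)=\beta(d)=r$, contradicting injectivity of $\beta$ as $c<d$. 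Thus $\beta^{-1}(H\setminus\{r\})=\emptyset$, i.e.\ $\beta([s,t])\subseteq K$, and $\beta$ is an arc of $K$ containing $p_1,\dots,p_n$.

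I expect the third paragraph to be the only real content: morally, an arc that dips from $K$ into $H\setminus\{r\}$ and comes back must pass through the single separating point $r$ twice, which an arc cannot do. Everything else — the identification of $K$ and the trimming of $\alpha$ — is routine. The only detail to watch is that a component of $\beta^{-1}(H\setminus\{r\})$ might abut $s$ or $t$; this case needs no separate treatment, since such an endpoint still maps into both $K$ and $H$, hence to $r$, and the injectivity contradiction persists.
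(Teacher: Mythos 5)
Your proof is correct and rests on the same key fact as the paper's: since $\overline{G-H}=(G-H)\cup\{r\}$ and $H$ is closed, any excursion of an injective arc out of $\overline{G-H}$ into $H\setminus\{r\}$ and back would force it to pass through $r$ twice. The paper organizes the argument slightly differently (cutting the arc at the unique parameter mapping to $r$ and keeping the half containing the $p_i$, rather than trimming to the extreme preimages of $\overline{G-H}$ and ruling out every excursion), but it is essentially the same approach.
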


\begin{proof}
First note that $\overline{G-H} = (G-H)\cup \set{r}$. Hence every connected set intersecting $G-H$ and $H-\set{r}$,
must contain $r$. 

Let $\mathcal{P}=\set{p_1, p_2, \dots , p_n}$ be a set of $n$ points in 
$\overline{G-H}$. Then, since $G$ is $n$--ac, there exists an arc $\alpha$ in $G$ containing $\mathcal{P}$.
If $\alpha \subset \overline{G-H}$, we are done. So assume $\alpha$
intersects $H-\set{r}$. Let $t_0, t_1\in [0,1]$ such that $\alpha(t_0)\in G-H$
and $\alpha(t_1)\in H-\set{r}$, assume without loss of generality that $t_0< t_1$. Hence
there exists $s\in [t_0, t_1]$ such that $\alpha(s)=r$. Then $\alpha([0,s])$ is an arc in $\overline{G-H}$
containing $\mathcal{P}$, otherwise $r\in \alpha((s,1])$ which is impossible since $\alpha$ is an injective image of $[0,1]$.
This proves that $\overline{G-H}$ is $n$--ac.
\end{proof}

The reverse implication of Proposition~\ref{complement} does not hold. To see this, let $G$ be a simple triod and
$H$ be one of the edges of $G$. Clearly $G$ is not $3$--ac but $\overline{G-H}$ (an arc) is $3$--ac.

\begin{definition}
Let $G$ be a finite graph. An edge $e$ of $G$ is called a {\it terminal edge} of $G$ if 
one of the vertices of $e$ is an end--point of $G$.
\label{isolated}
\end{definition}

\begin{definition}
Let $G$ be a finite graph, and let $I=\set{e_1, e_2, \dots e_m}$ be the set of terminal edges of $G$. 
Let $G^*$ be the graph given by $\overline{G-I}$. Clearly this operation can be applied to $G^*$
as well. We perform this operation as many times as necessary until we obtain a graph
$G'$ having no terminal edges. We called the graph $G'$ the {\it reduced graph of} $G$.
\label{reduced}
\end{definition}

The following is a corollary of Proposition~\ref{complement}.

\begin{corollary}
Let $G$ be an $n$--ac finite graph. Then the reduced graph of $G$ is an $n$--ac finite graph containing
no terminal edges.
\label{nwacreduced}
\end{corollary}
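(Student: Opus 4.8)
The plan is to reduce the statement to a single application of the idea underlying Proposition~\ref{complement}, and then iterate. Since the reduced graph $G'$ is obtained from $G$ by finitely many rounds of deleting terminal edges, and each such deletion strictly decreases the number of edges, it suffices to prove the one-step claim: \emph{if $G$ is $n$--ac and $e=[r,w]$ is a terminal edge of $G$, with $w$ the end--point of $G$ lying on $e$, then $\overline{G-e}$ is $n$--ac}; the corollary then follows by induction on the number of edges, and $G'$ has no terminal edges by construction. Here $\overline{G-e}$ denotes $G$ with the open edge $\mathring e$ deleted together with the now--isolated vertex $w$. Since $w$ has degree $1$, the closed edge $e$ meets $\overline{G-e}$ only at $r$, and $\overline{G-e}$ is connected because every component of $G\setminus\{r\}$ other than $\mathring e\cup\{w\}$ has $r$ in its closure.

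For the one-step claim, fix $p_1,\dots,p_n\in\overline{G-e}$ and use $n$--ac of $G$ to get an arc $\alpha$ in $G$ containing all of them. If $\alpha\subseteq\overline{G-e}$ we are done, so suppose $\alpha$ meets $\mathring e$. As in the proof of Proposition~\ref{complement}, the only access from the rest of $G$ into $\mathring e$ is through $r$, and since every interior vertex of $e$ has degree $2$ while $w$ has degree $1$, once $\alpha$ enters $\mathring e$ it must run monotonically to the dead end $w$: thus $w$ is an end--point of $\alpha$ and $\alpha\cap e$ is precisely the terminal segment of $\alpha$ from $r$ to $w$. Let $\beta$ be the sub-arc of $\alpha$ obtained by deleting the half-open piece from $r$ (exclusive) to $w$. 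Then $\beta$ meets $e$ only at $r$ (re-entering $\mathring e$ would force a second visit to $r$), so $\beta\subseteq\overline{G-e}$; and since no $p_i$ lies in $\mathring e$ or equals $w$, each $p_i$ lies on $\beta$. Hence $\beta$ is an arc in $\overline{G-e}$ containing $p_1,\dots,p_n$, so $\overline{G-e}$ is $n$--ac.

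The point requiring care — and the reason the corollary is not a purely formal consequence of Proposition~\ref{complement} — is that Proposition~\ref{complement} requires $G-H$ to be connected, whereas deleting a terminal edge $e$ incident to a cut point $r$ of $G$ leaves $G-e$ disconnected (even though $\overline{G-e}$ is still connected). The truncation argument above is written so as not to use connectedness of $G-e$, hence covers all configurations; when $r$ happens to be a branch point and $G-e$ is connected it is exactly Proposition~\ref{complement} with $H=e$. The only other thing to watch is the degenerate endgame of the pruning process, when further pruning would exhaust the graph, which does not arise for the graphs relevant to Theorem~\ref{main1}. Iterating the one-step claim through the pruning rounds thus terminates in finitely many steps at the reduced graph $G'$, which is therefore $n$--ac and free of terminal edges.
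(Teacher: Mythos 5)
Your proof is correct and follows the same overall strategy as the paper's: reduce to the one-step claim that deleting a single terminal edge preserves the $n$--ac property, observe that the reduced graph is reached from $G$ by finitely many such single deletions, and induct. The difference lies in how the one-step claim is justified. The paper simply invokes Proposition~\ref{complement} with $H=e$; you rightly point out that this does not literally apply, since a terminal edge need not satisfy that proposition's hypotheses --- $G-e$ can be disconnected (for instance, when $e$ is one leg of a simple $k$--od, removing the closed edge $e$ deletes the common vertex $r$ and scatters the remaining legs), and $r$ need not be a branch point at all (for instance, when $r$ has degree $2$). Your direct truncation argument closes this gap; it is essentially the argument inside the proof of Proposition~\ref{complement} rerun without those hypotheses, which is what the paper implicitly relies on. One small inaccuracy: it is not true that once $\alpha$ enters $\mathring{e}$ it must continue all the way to $w$ --- the arc may terminate at an interior point of $e$, in which case $w\notin\alpha$. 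This does not damage your construction: the excursion of $\alpha$ into $e\setminus\{r\}$ is still a single half-open terminal piece of the parameter interval (a second excursion would force a second visit to $r$, contradicting injectivity), and deleting that piece, whether or not it reaches $w$, yields the required sub-arc $\beta\subseteq\overline{G-e}$ containing all the $p_i$. Your closing remark about the degenerate endgame of the pruning process (which can exhaust the graph, as for a subdivided arc) flags a real, if harmless, wrinkle that the paper also glosses over.
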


\begin{proof}
Observe that the reduced graph of $G$ can also be obtained by removing terminal edges one at a time.

Now, from Proposition~\ref{complement}, if $G$ is an $n$--ac finite graph and $e$ is a terminal 
edge of $G$, then $\overline{G-e}$ is $n$--ac. This implies that each time we remove a terminal edge
we obtain an $n$--ac graph. This and the observation prove the corollary.
\end{proof}

\begin{remark}
Note that if $X$ is an $n$--ac space and $\set{p_1, p_2, \dots , p_n}$ are $n$ different points of $X$, then
there is an arc $\alpha$ such that $\set{p_1, p_2, \dots , p_n}\subset \alpha$ and such that the
end--points of $\alpha$ belong to $\set{p_1, p_2, \dots , p_n}$. To see this, let $\beta$ be the arc
containing $\set{p_1, p_2, \dots , p_n}$, given by the fact that $X$ is $n$--ac. 
Let $t_0=\text{min}\set{\beta^{-1}(p_i) \, |\,  i=1, \dots , n}$ and 
$t_1=\text{max}\set{\beta^{-1}(p_i) \, | \, i=1, \dots ,n}$. Then $\beta([t_0, t_1])$ satisfies the
conditions of $\alpha$.

From now on, if $X$ is an $n$--ac space, $\set{p_1, \dots , p_n}$ are $n$ different points and
$\alpha$ is an arc passing through $\set{p_1, p_2, \dots , p_n}$, then we will assume that the
end--points of $\alpha$ belong to $\set{p_1, p_2, \dots , p_n}$.
\label{endpoints}
\end{remark}

\begin{lemma}
Let $G$ be a finite graph. Assume that $G$ contains a simple triod $T=L_1\cup L_2\cup L_3$ (with
$\set{q}=L_i\cap L_j$, for $i\neq j$) such that for each $i$, $L_i-\set{q}$
contains no branch points of $G$. For each $i=1, 2, 3$, let $p_i\in \text{int}(L_i)$.
If $\alpha$ is an arc containing $\set{p_1, p_2, p_3}$, then

\begin{enumerate}
\item $q\in \text{int}(\alpha)$, and
\item at least one of the end points of $\alpha$ lies in $[q,p_1]\cup [q,p_2]\cup [q,p_3]$.
\end{enumerate}
\label{4ptsarc}
\end{lemma}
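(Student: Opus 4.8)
The plan rests on a structural observation about the legs. Because $L_i-\set{q}$ contains no branch point of $G$, every point of $\text{int}(L_i)$ has order $2$ in $G$ and a Euclidean neighbourhood in $G$ contained in $L_i$; hence $\text{int}(L_i)$ is open in $G$. Moreover $\text{int}(L_1),\text{int}(L_2),\text{int}(L_3)$ are pairwise disjoint (they can only meet in $L_i\cap L_j=\set{q}$, which they avoid), and $L_1,L_2,L_3$ leave $q$ in three distinct directions. Write $\alpha\colon[0,1]\to G$, and for $i=1,2,3$ let $J_i$ be the connected component of $\alpha^{-1}(L_i)$ that contains $\alpha^{-1}(p_i)$. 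Since $\alpha^{-1}(\text{int}(L_i))$ is open, $J_i$ is a non-degenerate closed subinterval of $[0,1]$, and $\alpha(J_i)$ is a non-degenerate subarc of $L_i$ containing $p_i$. Let $v_i$ be the endpoint of $L_i$ distinct from $q$; write $x_i$ for the endpoint of $\alpha(J_i)$ lying on the $q$-side of $p_i$ (so $x_i\in[q,p_i]$) and $y_i$ for the other endpoint (lying between $p_i$ and $v_i$).

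The heart of the argument is the claim: \emph{each $x_i$ is either $q$ or an endpoint of the arc $\alpha$; and if $x_i=q$ then $\alpha$ passes through $q$ using the direction of $L_i$ at $q$.} To see it, write $x_i=\alpha(s)$ with $s$ an endpoint of $J_i$. If $s$ is interior to $[0,1]$ then, as $J_i$ is a full component of $\alpha^{-1}(L_i)$, we have $\alpha(t)\notin L_i$ for $t$ immediately outside $J_i$ near $s$; this is impossible if $x_i\in\text{int}(L_i)$ (some $G$-neighbourhood of which lies in $L_i$, so $\alpha$ stays in $L_i$ near $s$) and impossible if $x_i=v_i$ (which lies on the wrong side of $p_i$), so $x_i=q$. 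And when $x_i=q$ the interval $J_i$ contains $\alpha^{-1}(q)$, so $\alpha$ runs into $L_i$ from $q$. In particular, if $\alpha$ does \emph{not} use the direction of $L_i$ at $q$, then $x_i\neq q$, hence $x_i$ is an endpoint of $\alpha$ lying in $[q,p_i]$.

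Now count the directions that $\alpha$ uses at $q$: at most two (two if $q\in\text{int}(\alpha)$, otherwise at most one), against three distinct directions coming from the legs. So some $L_{i_0}$ has its direction at $q$ unused by $\alpha$, and then $x_{i_0}$ is an endpoint of $\alpha$ contained in $[q,p_{i_0}]\subseteq[q,p_1]\cup[q,p_2]\cup[q,p_3]$ — this gives (2). For (1): were $q\notin\text{int}(\alpha)$, then $\alpha$ would use at most one direction at $q$, leaving at least two legs $L_{i_0},L_{i_1}$ unused; then $x_{i_0},x_{i_1}$ would be endpoints of $\alpha$ different from $q$ and from one another (by disjointness of the leg interiors). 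If $q\notin\alpha$, all three legs are unused and $x_1,x_2,x_3$ are three distinct endpoints of $\alpha$; if $q$ is an endpoint of $\alpha$, then $x_{i_0},x_{i_1}$ are two distinct endpoints of $\alpha$ other than $q$. Both contradict the fact that $\alpha$ has exactly two endpoints, so $q\in\text{int}(\alpha)$.

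The genuinely delicate step is the displayed claim. Since nothing is assumed about $G$ outside the triod, one has to be sure that $\alpha$ cannot leave $L_{i_0}$, wander through the rest of $G$, and re-enter $L_{i_0}$ on the $q$-side of $p_{i_0}$; this is ruled out precisely because $\text{int}(L_i)$ is open in $G$ and $J_i$ is a genuine connected component of $\alpha^{-1}(L_i)$. The remaining items — that $J_i$ is a non-degenerate closed interval, that $\alpha(J_i)$ is the subarc of $L_i$ with endpoints $x_i,y_i$ and $p_i$ between them, and the small case checks at $q$ and at $v_i$ — are routine.
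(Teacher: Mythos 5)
Your proof is correct, but it is organized quite differently from the paper's. The paper fixes an ordering $t_1<t_2<t_3$ of the parameter values of $p_1,p_2,p_3$, proves (1) by a case split on whether $q\in\alpha([0,t_2])$ (using that the far endpoint $l_2$ of $L_2$ must be traversed to reach $p_2$ from the non-$q$ side), and then derives (2) from (1) by a further case split on which side of $t_2$ the parameter of $q$ falls, obtaining along the way a slightly sharper conclusion (which leg contains which endpoint, depending on the ordering). You instead treat the three legs symmetrically: you take the component $J_i$ of $\alpha^{-1}(L_i)$ through $\alpha^{-1}(p_i)$, prove the single structural claim that the $q$-side endpoint $x_i$ of $\alpha(J_i)$ is either $q$ (in which case one of the at most two one-sided germs of $\alpha$ at $\alpha^{-1}(q)$ is consumed by $L_i$) or an endpoint of $\alpha$ lying in $[q,p_i]$, and then both (1) and (2) follow by pigeonhole against the three pairwise distinct directions of the legs at $q$ and the two endpoints of $\alpha$. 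This buys uniformity --- no ordering case-split, and the argument generalizes verbatim to $k$-ods, which would also streamline the paper's Proposition on branch points of degree $\ge 5$ --- at the cost of the finer "which leg" information the paper extracts (information that is not used later anyway). Two small points of hygiene: the distinctness of the $x_i$'s for unused legs should be (and implicitly is) justified by $x_i\in(q,p_i]\subseteq L_i\setminus\set{q}$ together with the pairwise disjointness of the $L_i\setminus\set{q}$; and the sentence claiming $\alpha(t)\notin L_i$ "immediately outside $J_i$" is not literally what maximality of a component of a closed set gives (other components could accumulate at $s$), but the argument you actually run --- if $\alpha$ stayed in $L_i$ on an interval past $s$ then $J_i$ would not be a maximal connected subset of $\alpha^{-1}(L_i)$ --- is the correct one, so this is only a matter of phrasing.
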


\begin{proof}
Let $G$, $T$ and $p_1, p_2, p_3$ as in the hypothesis of the lemma.
Let $\alpha \subset G$ be an arc containing $\set{p_1, p_2, p_3}$, and
denote, for each $i=1,2,3$, by $[q, l_i]$ the arc $L_i$.

\begin{enumerate}
\item Assume, without loss of generality, that $\alpha(t_i)=p_i$ and that
$t_1 < t_2 <t_3$. Then $p_2\in \text{int}(\alpha)$ and $\alpha = \alpha([0, t_2])\cup \alpha([t_2, 1])$.

We consider two cases: $q\not\in \alpha([0,t_2])$ and $q\in \alpha([0,t_2])$.
Assume $q\not\in \alpha([0,t_2])$, then, since $L_2-\set{q}$ contains no branch points of $G$ and
$p_2 \in \text{int}(L_2)$, we have that $l_2=\alpha(s)$ for some $s$ with $0<s<t_2$. Hence $[l_2,p_2]\cont \alpha([0,t_2])$.
Therefore, since $\set{p_2, p_3}\cont \alpha([t_2,1])$, $p_2\in \text{int}(L_2)$, $L_2-q$ has no
branch points of $G$, and $\alpha$ is a $1-1$ function, we have that $[p_2,q]\cont \alpha([t_2, 1))$.
This implies that $q\in \text{int}(\alpha)$.

Now suppose that $q\in \alpha([0,t_2])$. If $q\in \alpha((0,t_2])$, then we are done. So assume that
$q=\alpha(0)$, i.e. $q$ is an end-point of $\alpha$. Using the same argument as in the previous case, 
we can conclude that $[l_2,p_2]\cont \alpha([0,t_2])$. This implies, as before, that $[p_2,q]\cont \alpha([t_2, 1))$
wich contradicts the fact that $\alpha$ is a $1-1$ function. Hence $q\in \text{int}(\alpha)$.

\item First, assume that $\alpha(t_i)=p_i$ and that $t_1 < t_2 < t_3$. We
will show that one end point of $\alpha$ lies on either $[q,p_1]$ or $[q,p_3]$. The other cases (rearrangements of the $t_i$s)
are done in the same way as this case, the only difference is the conclusion: the end point lies
either on $[q,p_1]$ or $[q,p_2]$, or the end point lies either on $[q,p_2]$ or $[q,p_3]$.

By (1), $q\in \text{int}(\alpha)$ and if $q=\alpha(s)$, then $s<t_3$; otherwise 
the arc $\alpha([0,t_3])$
would contain $p_1,p_2,p_3$ and $q\not\in\text{int}(\alpha([0,t_3]))$ which is contrary to (1).
Similarly, $t_1<s$. Hence $t_1<s<t_3$.

If $s<t_2$, then $p_1, q\not\in \alpha([t_2,1])=\alpha([t_2,t_3])\cup \alpha([t_3, 1])$.
Now, since $L_3-\set{q}$ has no branch points of $G$, $q\in \alpha([0,t_2])$, and 
$p_3\in\text{int}(L_3)$, we have $l_3\in \alpha([t_2,t_3])$. Thus, since $\alpha$
is a $1-1$ function, $\alpha([t_3,1])\subset (q,p_3]$. This shows that $\alpha(1)$ lies in $[q,p_3]$.

If $t_2<s$, then a similar argument using $-\alpha$ ($\alpha$ traveled in the opposite direction)
shows that one of the end points of $\alpha$ lies on $[q,p_1]$.
\end{enumerate}
\end{proof}

We obtain the following corollaries.

\begin{corollary}
With the same conditions as in Lemma~\ref{4ptsarc}. If $\alpha$ is an arc containing $\set{p_1,p_2,p_3}$, and
$q=\alpha(s)$, $p_i=\alpha(t_i)$ for $i=1,2,3$, then $t_j<s <t_k$ for some $j,k\in\set{1,2,3}$.
\label{inbetween}
\end{corollary}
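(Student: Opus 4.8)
The plan is to derive this almost immediately from part~(1) of Lemma~\ref{4ptsarc}, exploiting the fact that that statement applies to \emph{every} arc through $\set{p_1,p_2,p_3}$ in $G$ --- not just to $\alpha$ --- and in particular to suitable subarcs of $\alpha$.

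First I would record the trivial point that $q\neq p_i$ for each $i$: writing $L_i=[q,l_i]$, the point $q$ is an end point of $L_i$ while $p_i\in\text{int}(L_i)$, so the two differ. Since $\alpha$ is injective, this gives $s\neq t_i$ for $i=1,2,3$.

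Now suppose, for contradiction, that there is no pair $j,k$ with $t_j<s<t_k$. Together with $s\neq t_i$ for all $i$, this forces either $s<t_i$ for every $i$, or $s>t_i$ for every $i$. In the first case all of $p_1,p_2,p_3$ lie in $\alpha((s,1])$, so the subarc $\alpha|_{[s,1]}$ is an arc containing $\set{p_1,p_2,p_3}$ in which $q=\alpha(s)$ is an end point; this contradicts Lemma~\ref{4ptsarc}(1), which puts $q$ in the interior of any such arc. In the second case the subarc $\alpha|_{[0,s]}$ is an arc containing $\set{p_1,p_2,p_3}$ with $q=\alpha(s)$ as an end point, and we reach the same contradiction. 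Hence $t_j<s<t_k$ for some $j,k\in\set{1,2,3}$ --- indeed for $j$ realizing $\min_i t_i$ and $k$ realizing $\max_i t_i$.

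There is essentially no hard step here; the only points requiring any care are to apply Lemma~\ref{4ptsarc}(1) to the subarcs of $\alpha$ rather than to $\alpha$ itself, and to note at the outset that $q$ is distinct from each $p_i$ so that $s$ cannot coincide with any $t_i$. (Alternatively one can simply quote the computation inside the proof of Lemma~\ref{4ptsarc}(2), where the inequality $t_1<s<t_3$ was established under the labelling $t_1<t_2<t_3$; the subarc argument above merely repackages this so the corollary stands on its own.)
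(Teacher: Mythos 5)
Your proof is correct and follows essentially the same route as the paper: assume $q$ is not between two of the $p_i$, conclude $s<t_i$ for all $i$ or $s>t_i$ for all $i$, and apply Lemma~\ref{4ptsarc}(1) to the subarc $\alpha([s,1])$ or $\alpha([0,s])$ to get a contradiction. Your extra remark that $q\neq p_i$ (hence $s\neq t_i$) is a small point the paper leaves implicit, but the argument is otherwise identical.
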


\begin{proof}
To see this, note that if $q$ does not lie between two of the $p_i$s, then either $s<t_i$ for all $i$, or
$t_i<s$ for all $i$. Then either $\alpha([s,1])$ or $\alpha([0,s])$ are arcs containing $\set{p_1,p_2,p_3}$ for
which $q$ is an end-point, this contradicts (1) of Lemma~\ref{4ptsarc}.
\end{proof}

\begin{corollary}
Let $G$ be a finite graph, and let $\set{p_1, p_2, \dots p_n}\subset G$ be $n$ different
points. In addition, let $\alpha$ be one arc containing $\set{p_1, p_2, \dots p_n}$, with end--points belonging
to $\set{p_1, p_2, \dots p_n}$. If there are three different indexes $i,j,k$ such that $p_i$, $p_j$ and $p_k$ belong 
to a triod $T$ satisfying the conditions of (Lemma~\ref{4ptsarc}), and such that 
$([q,p_i]\cup [q,p_j]\cup [q,p_k])\cap \set{p_1, p_2, \dots p_n} =\set{p_i, p_j, p_k}$, then either
$p_i$, $p_j$ or $p_k$ is an end--point of $\alpha$.
\label{triodendpt}
\end{corollary}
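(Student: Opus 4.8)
The plan is to reduce Corollary~\ref{triodendpt} to Lemma~\ref{4ptsarc}(2) by a relabelling and restriction-of-the-arc argument. First I would apply Lemma~\ref{4ptsarc} directly to the triod $T$ and the three points $p_i, p_j, p_k$: this already gives an arc $\alpha$ containing $\{p_1,\dots,p_n\}$, and in particular containing $\{p_i,p_j,p_k\}$, so by part (2) of the Lemma at least one endpoint of $\alpha$ lies in $[q,p_i]\cup[q,p_j]\cup[q,p_k]$. Call that endpoint $\alpha(e)$, where $e\in\{0,1\}$.

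The remaining work is to upgrade ``the endpoint lies on one of the three arcs $[q,p_\bullet]$'' to ``the endpoint equals one of $p_i, p_j, p_k$''. Here is where the extra hypothesis $([q,p_i]\cup[q,p_j]\cup[q,p_k])\cap\{p_1,\dots,p_n\}=\{p_i,p_j,p_k\}$ does its job. Say the endpoint $\alpha(e)$ lies on $[q,p_i]$ (the other two cases are identical). By Remark~\ref{endpoints} the endpoints of $\alpha$ are among $p_1,\dots,p_n$, so $\alpha(e)=p_m$ for some $m$; since $p_m=\alpha(e)\in[q,p_i]$, the hypothesis forces $p_m\in\{p_i,p_j,p_k\}$. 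But $p_j$ and $p_k$ lie on the legs $[q,p_j]$ and $[q,p_k]$ respectively, and these meet $[q,p_i]$ only in $q$ (the triod structure, $\{q\}=L_a\cap L_b$ for $a\ne b$, and $p_j,p_k\in\operatorname{int}(L_j),\operatorname{int}(L_k)$ keep them off $[q,p_i]$); also $q\in\operatorname{int}(\alpha)$ by Lemma~\ref{4ptsarc}(1), so $q$ is not an endpoint of $\alpha$ and hence $\alpha(e)\ne q$. Therefore $p_m=p_i$, i.e. $p_i$ is an endpoint of $\alpha$. Running the same argument in the cases where $\alpha(e)\in[q,p_j]$ or $\alpha(e)\in[q,p_k]$ yields $p_j$ or $p_k$ respectively as an endpoint, which is exactly the claimed conclusion.

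The main obstacle, such as it is, is bookkeeping rather than mathematical depth: one must be careful that ``endpoints of $\alpha$ lie in $\{p_1,\dots,p_n\}$'' is legitimately available (it is, by Remark~\ref{endpoints}, which also assures us $\alpha$ has been chosen this way), and one must verify that the three legs of the triod genuinely intersect pairwise only in $q$ so that a point of $\{p_i,p_j,p_k\}$ sitting on $[q,p_i]$ can only be $p_i$ (or the excluded point $q$). Both of these follow immediately from the hypotheses imposed on $T$ in Lemma~\ref{4ptsarc} (each $L_a-\{q\}$ contains no branch point of $G$, and the $p_a$ are interior to the $L_a$), so no new idea is needed. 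I would write this up in a few lines, essentially ``apply Lemma~\ref{4ptsarc}(2), then use the separation hypothesis together with Lemma~\ref{4ptsarc}(1) to pin the endpoint down to $p_i$, $p_j$, or $p_k$.''
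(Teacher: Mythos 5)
Your proposal is correct and follows essentially the same route as the paper: apply Lemma~\ref{4ptsarc}(2) to conclude that an endpoint of $\alpha$ lies in $[q,p_i]\cup[q,p_j]\cup[q,p_k]$, then combine the standing assumption that endpoints of $\alpha$ lie in $\{p_1,\dots,p_n\}$ with the intersection hypothesis to conclude that this endpoint is one of $p_i,p_j,p_k$. Your additional step identifying \emph{which} of the three it is (using that the legs meet only in $q$ and that $q\in\operatorname{int}(\alpha)$) is valid but not needed for the stated conclusion, and the paper omits it.
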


\begin{proof}
By (2) of Lemma~\ref{4ptsarc}, at least one of the end points of $\alpha$ lies in $[q,p_i]\cup [q,p_j]\cup [q,p_k]$.
Hence, since the end-points of $\alpha$ belong to $\set{p_1, p_2, \dots p_n}$ and
$([q,p_i]\cup [q,p_j]\cup [q,p_k])\cap \set{p_1, p_2, \dots p_n} =\set{p_i, p_j, p_k}$, one of $p_i$, $p_j$ or $p_k$ 
is an end-point of $\alpha$
\end{proof}

\begin{proposition}
Let $G$ be a finite graph. If $G$ is $5$--ac, then $G$ has no branch point of
degree greater than or equal to five.
\label{degree5}
\end{proposition}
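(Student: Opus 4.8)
The plan is to argue by contradiction: assume $G$ is $5$--ac but has a branch point $q$ of degree $\geq 5$, and produce a set of five points no arc can contain. Since $q$ has degree $\geq 5$, there are at least five edge--ends of $G$ at $q$; picking five of them and moving a small distance along each, I would choose points $p_1,\dots,p_5$, and short arcs $L_i=[q,p_i]$, such that (a) each $p_i\ne q$, (b) each $L_i-\set{q}$ contains no branch point of $G$, and (c) the $L_i$ pairwise intersect only in $\set{q}$. (One only has to take the $p_i$ close enough to $q$; this choice is valid even if $q$ happens to lie on loop edges, since the two ``sides'' of such a loop meet only at $q$ near $q$.) Now invoke $5$--ac to get an arc $\alpha$ through $\set{p_1,\dots,p_5}$, and by Remark~\ref{endpoints} assume the end--points of $\alpha$ lie in $\set{p_1,\dots,p_5}$.

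Next I would feed every triple into Corollary~\ref{triodendpt}. Fix distinct $i,j,k\in\set{1,\dots,5}$. Then $T=L_i\cup L_j\cup L_k$ is a simple triod with common point $q$, and by (b) each of $L_i-\set q,L_j-\set q,L_k-\set q$ contains no branch point of $G$, so $T$ satisfies the hypotheses of Lemma~\ref{4ptsarc}. By (c) and $q\notin\set{p_1,\dots,p_5}$, for $m\notin\set{i,j,k}$ the point $p_m$ lies in $[q,p_m]$, which meets $[q,p_i]\cup[q,p_j]\cup[q,p_k]$ only in $q$, so $p_m\notin[q,p_i]\cup[q,p_j]\cup[q,p_k]$; hence $([q,p_i]\cup[q,p_j]\cup[q,p_k])\cap\set{p_1,\dots,p_5}=\set{p_i,p_j,p_k}$. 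Corollary~\ref{triodendpt} then yields that one of $p_i,p_j,p_k$ is an end--point of $\alpha$.

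Finally, count: $\alpha$ has exactly two end--points, and they lie among $p_1,\dots,p_5$, so at most two of the five points are end--points of $\alpha$. The remaining (at least) three points form a triple none of whose members is an end--point of $\alpha$, contradicting the previous paragraph. Therefore $G$ has no branch point of degree $\geq 5$.

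The proof is essentially a pigeonhole argument once Lemma~\ref{4ptsarc}/Corollary~\ref{triodendpt} is in hand, so I do not expect a serious obstacle; the only point requiring care is the bookkeeping in selecting the five arcs $L_i$ so that conditions (b) and (c) genuinely hold --- in particular making the selection robust to loop edges at $q$ and to other branch points of $G$ lying near $q$ --- which is handled simply by taking the $p_i$ sufficiently close to $q$.
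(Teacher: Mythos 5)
Your proof is correct, and your setup (a simple $5$--od at $q$ with one marked point on each leg, plus an arc $\alpha$ through all five points whose endpoints lie among them) is the same as the paper's; but the way you extract the contradiction is genuinely different. The paper uses part (1) of Lemma~\ref{4ptsarc} directly: $q$ lies in the interior of $\alpha$, so cutting $\alpha$ at $q$ produces two subarcs each having $q$ as an endpoint, one of which must contain three of the five points; applying part (1) again to that subarc and the corresponding sub-triod forces $q$ into its interior, a contradiction. You instead run Corollary~\ref{triodendpt} over the triples $\set{p_i,p_j,p_k}$ to conclude that each triple must contribute an endpoint of $\alpha$, and then count: $\alpha$ has only two endpoints, so at least three of the five points are non-endpoints, and they themselves form a triple with no endpoint in it. Both are pigeonhole arguments resting on Lemma~\ref{4ptsarc}; yours leans on part (2) (through the corollary) where the paper leans on part (1), and yours avoids the ``cut at $q$ and re-apply'' step. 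Your verification that each triple meets the hypotheses of Corollary~\ref{triodendpt} (including the condition $([q,p_i]\cup[q,p_j]\cup[q,p_k])\cap\set{p_1,\dots,p_5}=\set{p_i,p_j,p_k}$) is right.

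One cosmetic slip: you set $L_i=[q,p_i]$, so $p_i$ is an endpoint of the leg $L_i$, whereas Lemma~\ref{4ptsarc} (and hence Corollary~\ref{triodendpt}) formally requires $p_i\in\text{int}(L_i)$. Just take each leg to be a slightly longer initial segment of its edge-end with $p_i$ in its interior, exactly as in the paper's choice of the $5$--od; nothing else in your argument changes.
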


\begin{proof}
Assume, by contradiction, that $G$ contains at least one branch point, $q$, of degree at least $5$.
Then, since $G$ is a finite graph, $G$ contains a simple $5$-od, $T=L_1\cup L_2 \cup L_3\cup L_4\cup L_5$, such that
$\set{q}=L_i\cap L_j$ for $i\neq j$, and such that $L_i-\set{q}$ contains
no branch points of $G$.

For each $i=1, \dots, 5$, let $p_i\in \text{int}(L_i)$. Then, since $G$ is $5$--ac, there exists an
arc $\alpha \subset G$ such that $\set{p_1, p_2, \dots, p_5}\subset \alpha$. Note that $T$ contains a triod
satisfying the conditions of Lemma~\ref{4ptsarc}, hence
$q\in \text{int}(\alpha)$. Let $t_0\in (0,1)$ be the point such that $\alpha(t_0)=q$. Then
$\alpha-\set{q}=\alpha([0,t_0))\cup \alpha((t_0,1])$, and either $\alpha([0,t_0])$ or $\alpha([t_0,1])$
contains three points out of $\set{p_1, p_2, p_3, p_4, p_5}$, note that $q$ is an end--point of
$\alpha([0,t_0])$ and of $\alpha([t_0,1])$. 
Without loss of generality, suppose that $p_1, p_2, p_3 \subset \alpha([0,t_0])$; then $L_1, L_2, L_3$
and the corresponding $p_i$s satisfy the conditions of Lemma~\ref{4ptsarc} implying that any arc 
containing those points contains $q$ in its interior, a contradiction, since $q$ is an
end point of $\alpha([0,t_0])$. This shows that $G$
does not contain a branch point of degree greater than or equal to five.
\end{proof}

From Proposition~\ref{degree5} we obtain the following corollaries.

\begin{corollary}
Let $G$ be a finite graph. If $G$ is $n$--ac, for $n\geq 5$, then $G$ has no branch point of
degree greater than or equal to five.
\label{nodegree5n}
\end{corollary}

The following proposition is easy to prove.

\begin{proposition}
Let $G$ be a finite connected graph. If $G$ has at least three branch points, then there is an arc
$\alpha$ such that the end-points of $\alpha$ are branch points of $G$ and all the points of
the interior of $\alpha$, except for one, are not branch points of $G$. So $\alpha$ contains
exactly three branch points of $G$.
\label{p3branchpts}
\end{proposition}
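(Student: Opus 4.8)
The plan is to pass from $G$ to the combinatorial ``branch-point graph'' and then read off the required arc from an elementary fact about connected graphs. Let $H$ be the simple graph whose vertices are the branch points of $G$, with two distinct branch points $u,v$ joined by an edge of $H$ exactly when there is an arc in $G$ from $u$ to $v$ whose interior contains no branch point of $G$. First I would check that $H$ is connected: given branch points $u,v$, since the connected graph $G$ is arcwise connected, choose an arc $P$ from $u$ to $v$; the branch points of $G$ on $P$ occur in a finite linear order $u=c_0,c_1,\dots,c_k=v$ along $P$, and the subarc of $P$ between consecutive $c_{i-1},c_i$ has no branch point in its interior, so $c_{i-1}c_i\in E(H)$ and $c_0c_1\cdots c_k$ is a walk in $H$ from $u$ to $v$. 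Since $G$ has at least three branch points, $H$ has at least three vertices; a connected graph on at least three vertices cannot have every vertex of degree at most one, so some vertex $m$ of $H$ has two distinct $H$-neighbours $a$ and $b$.

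By definition of $H$ there are arcs $A$ from $a$ to $m$ and $B$ from $m$ to $b$ in $G$, each containing no branch point of $G$ in its interior. I claim that $A\cap B=\{m\}$. Granting this, $\alpha:=A\cup B$ is an arc in $G$ from $a$ to $b$ through $m$: its endpoints $a,b$ are branch points, and its interior is $\text{int}(A)\cup\text{int}(B)\cup\{m\}$, so the only branch point of $G$ in $\text{int}(\alpha)$ is $m$. Thus $\alpha$ meets exactly three branch points of $G$, which is what is required.

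It remains to prove the claim, which is the crux of the argument --- ruling out the possibility that the two branches re-intersect away from $m$. Suppose $x\in A\cap B$ with $x\ne m$. Since $a,b,m$ are distinct and neither $A$ nor $B$ has a branch point in its interior, $x$ cannot equal $a$ or $b$ (each is a branch point meeting the other arc only as an endpoint), so $x$ is an interior point of both $A$ and $B$; being an interior point of an arc and not a branch point of $G$, it has degree $2$ in $G$, hence a neighbourhood in $G$ homeomorphic to an interval, and a small enough such neighbourhood is contained in $A$ and in $B$ since $x$ is interior to each arc. Therefore $N:=\{y\in G: y \text{ has a neighbourhood in } G \text{ contained in } A\cap B\}$ is a nonempty open subset of $G$, and by the same reasoning $N$ is relatively closed in $G\setminus\{a,m,b\}$, so $N$ contains the connected component of $G\setminus\{a,m,b\}$ meeting it. But $A\setminus\{a,m\}$ is connected, is disjoint from $\{a,m,b\}$ (note $b\notin A$), and contains $x$, hence lies in that component; so $A=\overline{A\setminus\{a,m\}}\subseteq\overline{N}\subseteq A\cap B\subseteq B$, and symmetrically $B\subseteq A$, giving $A=B$ --- impossible since $A$ ends at $a$ and $B$ at $b$ with $a\ne b$. (One could instead argue purely combinatorially: $A$ and $B$ leave $m$ along edge-germs that must be distinct, and if they ever shared an edge of $G$ they would, tracing back through degree-two vertices, share the preceding edge too, forcing them to share their first edge at $m$.) This re-intersection step is the only real obstacle; the passage to $H$ and the fact that a connected graph on $\ge 3$ vertices has a vertex of degree $\ge 2$ are routine bookkeeping.
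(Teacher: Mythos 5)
The paper does not actually prove this proposition --- it is introduced with ``The following proposition is easy to prove'' and no argument is given --- so there is nothing to compare your proof against. Your argument is correct: the reduction to the auxiliary graph $H$ on the branch points, the observation that a connected graph on at least three vertices has a vertex with two distinct neighbours, and (the only delicate step) the verification that the two branch-free arcs $A$ and $B$ meet only at $m$ are all sound, the last via a legitimate open-and-closed argument in $G\setminus\{a,m,b\}$ forcing $A=B$ and hence $a=b$ if they re-intersected. This is a complete proof of a statement the authors left to the reader.
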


\begin{theorem}
A finite graph with three or more branch points cannot be $7$--ac.
\label{no7wac}
\end{theorem}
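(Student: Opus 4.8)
The plan is to argue by contradiction: suppose $G$ is a finite graph with at least three branch points that is $7$--ac. By Proposition~\ref{p3branchpts}, $G$ contains an arc $\alpha_0$ whose two endpoints, say $a$ and $b$, are branch points of $G$, and whose interior contains exactly one branch point, say $c$, with all other interior points of $\alpha_0$ having degree $2$ in $G$. So along $\alpha_0$ we see three branch points $a, c, b$ in that order. The idea is to select seven carefully chosen points near these three branch points so that any arc through all seven is forced to behave like a triod arc at \emph{each} of $a$, $c$, $b$ simultaneously, and then derive a contradiction from the endpoint constraint in Lemma~\ref{4ptsarc} and its corollaries (in particular Corollary~\ref{triodendpt}).

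Concretely, I would first use Corollary~\ref{nodegree5n} to note that, since $G$ is $7$--ac, every branch point has degree $3$ or $4$. At each of $a$, $c$, $b$ pick two ``private'' edge-germs not lying along $\alpha_0$ — more precisely, shrinking along edges until no further branch points are met, obtain for each branch point $v \in \{a, c, b\}$ two arcs $L_v^1, L_v^2$ emanating from $v$, internally free of branch points, and with interiors disjoint from $\alpha_0$ and from each other's germs; this is possible since each of $a$ and $b$ has at least one edge off $\alpha_0$ (in fact $a$ has two, as $a$ is an endpoint of $\alpha_0$ so only one edge of $a$ lies on $\alpha_0$) and $c$ has at least one edge off $\alpha_0$ (having two edges on $\alpha_0$). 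The subtle point is whether we can get \emph{enough} independent germs; I would handle the low-degree cases by also using the two ``sides'' of $\alpha_0$ at $c$ together with one off-arc germ to form a triod at $c$, and similarly bona fide triods $T_a = [a,c] \cup L_a^1 \cup L_a^2$ (truncated) and $T_b = [c,b] \cup L_b^1 \cup L_b^2$ at $a$ and $b$. Then choose points $p_i$ in the interiors of the legs of these three triods: three points feeding the triod at $a$, three feeding the triod at $c$, three feeding the triod at $b$, arranging the shared legs so the total is seven points (the arc $[a,c]$ serves both $T_a$ and $T_c$, and $[c,b]$ serves both $T_c$ and $T_b$, letting us reuse points).

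With seven points chosen, invoke $7$--ac to get an arc $\alpha$ containing all of them, with endpoints among the seven (Remark~\ref{endpoints}). Now apply Corollary~\ref{triodendpt} to each of the three triods $T_a$, $T_c$, $T_b$: each forces one of \emph{its} three feeding points to be an endpoint of $\alpha$. Since $\alpha$ has only two endpoints, and I will have arranged the point sets so that the feeding triples of $T_a$, $T_c$, $T_b$ are pairwise disjoint except for the controlled overlaps on $[a,c]$ and $[c,b]$, a counting/pigeonhole argument must yield a contradiction — three distinct ``demanded'' endpoints among only two available. The main work, and the main obstacle, is the bookkeeping: I must choose the seven points and the legs of the three triods so that (a) each triod genuinely satisfies the hypotheses of Lemma~\ref{4ptsarc} (legs internally branch-point-free, meeting only at the center), (b) the ``only these points on the tripod'' condition $([q,p_i]\cup[q,p_j]\cup[q,p_k])\cap\{p_1,\dots,p_7\}=\{p_i,p_j,p_k\}$ of Corollary~\ref{triodendpt} holds for each triod, and (c) the three forced endpoints cannot all be accommodated by two actual endpoints. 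Getting (b) and (c) to hold simultaneously while keeping the total at seven is where care is needed; if seven points genuinely suffice one must exploit the reuse of $[a,c]$ and $[c,b]$, and if a naive count needs more, one falls back on using that $\alpha$'s endpoints lie in the point set to squeeze the argument. Once the combinatorics is set up correctly, the contradiction is immediate from the cited corollaries, so no further topology is required.
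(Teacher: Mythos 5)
Your setup is exactly the paper's: take the arc from Proposition~\ref{p3branchpts} through three branch points $q_1,q_2,q_3$, build a triod at each, and choose seven points with the two ``bridge'' points (one on $[q_1,q_2]$, one on $[q_2,q_3]$) shared between adjacent triods, so that Corollary~\ref{triodendpt} applies to each triple. The gap is in your final step. You claim that once the combinatorics is arranged, ``a counting/pigeonhole argument must yield a contradiction --- three distinct demanded endpoints among only two available'' and that ``no further topology is required.'' This is false precisely because of the overlaps you need in order to stay at seven points. With triples $\{p_1,p_2,p_3\}$, $\{p_3,p_4,p_5\}$, $\{p_5,p_6,p_7\}$ there are many two-element hitting sets --- for instance $\{p_3,p_5\}$ meets all three triples, as do $\{p_1,p_5\}$, $\{p_2,p_5\}$, $\{p_3,p_6\}$, $\{p_3,p_7\}$ --- so Corollary~\ref{triodendpt} alone does not force three distinct endpoints. (If you made the triples pairwise disjoint to make the pigeonhole genuine, you would need nine points and would only be proving the graph is not $9$--ac.)

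The paper closes this gap with a further topological argument that you omit: it enumerates the surviving endpoint configurations ($p_1$ or $p_2$ together with $p_5$; $p_6$ or $p_7$ together with $p_3$; or $p_3$ together with $p_5$) and rules out each one by showing that the hypothesized arc $\beta$ would then have to contain a nondegenerate simple triod. The mechanism is that every point of $\{p_1,\dots,p_7\}$ other than the two endpoints lies in the interior of $\beta$, and since the relevant legs contain no branch points of $G$, an interior point forces the entire segment from that point to the adjacent branch point to lie in $\beta$; assembling three such segments at a single branch point puts a simple triod inside the arc $\beta$, which is impossible. Without this case analysis (or something equivalent), your argument does not reach a contradiction.
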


\begin{proof} Let $G$ be a finite graph with at least three branch points.

By Proposition~\ref{p3branchpts}, there is an arc $\alpha$ in $G$ containing exactly three branch points of $G$ such
that two of them are the end--points of $\alpha$. Denote by $q_1$, $q_2$, and $q_3$ these branch points, and
assume without loss of generality that $q_1$ and $q_3$ are the end-points of $\alpha$.

Let $p_3$ be a point between $q_1$ and $q_2$, and let $p_5$ be a point between $q_2$ and $q_3$.
Since $G$ is a finite graph, we can find, in a neighborhood of $q_1$, two points $p_1$ and $p_2$ such
that $p_1$, $p_2$, $p_3$ belong to a triod $T_1$ satisfying the conditions of Lemma~\ref{4ptsarc}, and such that
$q_1$ is the branch point of $T_1$. Similarly, we can find a point $p_4$, in a neighborhood of $q_2$, such that
$p_3$, $p_4$ and $p_5$ belong to a triod $T_2$ satisfying the conditions of Lemma~\ref{4ptsarc}, and such that
$q_2$ is the branch point of $T_2$. Finally, we can find two points $p_6$, and $p_7$, in a neighborhood of $q_3$,
such that $p_5$, $p_6$ and $p_7$ belong to a triod $T_3$ satisfying the conditions of Lemma~\ref{4ptsarc}, and such that
$q_3$ is the branch point of $T_3$.

\begin{center}
\begin{tikzpicture}[vertexq/.style={circle,draw=blue!50,fill=blue!20,thick, inner sep=0mm, minimum size=1mm},
vertexp/.style={circle,draw=purple!50,fill=purple!20,thick, inner sep=0mm, minimum size=1mm},
  point/.style={coordinate}]

\node[point] (pq2)  {};
\node[point] (pq1) [below left=1cm and 3cm of pq2] {};
\node[point] (pq3) [below right=1cm and 3cm of pq2] {};

\draw (pq1) -- (pq2) node[vertexp,midway,label=above left:$p_3$] {};
\draw (pq2) -- (pq3) node[vertexp,midway,label=above right:$p_5$] {};

\node[vertexq] (q1) [label=above:$q_1$] at (pq1) {};
\node[vertexq] (q2) [label=below:$q_2$] at (pq2) {};
\node[vertexq] (q3) [label=above:$q_3$] at (pq3) {};

\node[point] (q21) [above left=of q2] {};
\node[point] (q22) [above=of q2] {};
\node[point] (q23) [above right=of q2] {};

\draw (q2) -- (q21) node[vertexp,midway,label=left:$p_4$] {};
\draw (q2) -- (q22);
\draw (q2) -- (q23);

\node[point] (q11) [left=of q1] {};
\node[point] (q12) [above left=of q1] {};
\node[point] (q13) [below left=of q1] {};
\node[point] (q14) [below right=of q1] {};

\draw (q1) -- (q11);
\draw (q1) -- (q12);
\draw (q1) -- (q13) node[vertexp,midway,label=right:$p_1$] {};
\draw (q1) -- (q14) node[vertexp,midway,label=right:$p_2$] {};

\node[point] (q31) [right=of q3] {};
\node[point] (q32) [above right=of q3] {};
\node[point] (q33) [below left=of q3] {};
\node[point] (q34) [below right=of q3] {};

\draw (q3) -- (q31);
\draw (q3) -- (q32) node[vertexp,midway,label=right:$p_6$] {};
\draw (q3) -- (q33);
\draw (q3) -- (q34) node[vertexp,midway,label=right:$p_7$] {};

\end{tikzpicture}
\end{center}


We show by contradiction that there is no arc containing $\set{p_1, p_2, \dots , p_7}$.
Suppose that there is an arc $\beta\subset G$ containing the points $\set{p_1, p_2, \dots , p_7}$,
using the same argument from Remark~\ref{endpoints}, we can assume that the end points of $\beta$ belong to 
$\set{p_1, p_2, \dots , p_7}$.

Now, by Corollary~\ref{triodendpt}, one of $\set{p_1, p_2, p_3}$ is an end point of $\beta$. Similarly, one of
$\set{p_3, p_4, p_5}$ is an end--point of $\beta$, and one of $\set{p_5, p_6, p_7}$ is an end--point of $\beta$. So,
since $\beta$ is an arc with end--points in $\set{p_1, p_2, p_3, \dots , p_7}$, we have that either
\begin{enumerate}
\item[(i)] $p_1$ or $p_2$ and $p_5$ are the end--points of $\beta$, or

\item[(ii)] $p_6$ or $p_7$ and $p_3$ are the end--points of $\beta$ or

\item[(iii)] $p_3$ and $p_5$ are the end--points of $\beta$,

\end{enumerate}

are the only possible cases. We will prove that every case leads to a contradiction.

\begin{enumerate}
\item[(i)] Assume that $p_1$ and $p_5$ are the end--points of $\beta$. Since the arc between
$q_2$ and $q_3$ contains no branch points of $G$, we have that either $[q_2, p_5]\subset \beta$ or
$[p_5, q_3]\subset \beta$. 

Assume first that $[q_2, p_5]\subset \beta$. Then, by the way $p_4$ was chosen
and the fact that $p_4 \in \text{int}(\beta)$, the arc $[p_4, q_2]\subset \beta$; similarly, since the arc
$[q_1,q_2]$ contains no branch points of $G$ and the fact that $p_3\in\text{int}(\beta)$, the arc
$[p_3,q_2]\subset\beta$. Then $\left([p_3,q_2] \cup [p_4, q_2] \cup [q_2, p_5]\right)\subset \beta$, which
is a contradiction since $\left([p_3,q_2] \cup [p_4, q_2] \cup [q_2, p_5]\right)$ is a nondegenerate
simple triod.

Assume that $[p_5, q_3]\subset \beta$. Then, by the way $p_6$ was chosen and the fact that $p_6\in\text{int}(\beta)$,
the arc $[q_3, p_6]\subset \beta$. Using the same argument we can conclude that the arc $[q_3, p_7]\subset \beta$.
Hence $\left([p_5,q_3] \cup [p_6, q_3] \cup [q_3, p_7]\right)\subset \beta$, which is a contradiction.

The case when $p_2$ and $p_5$ are the end--points of $\beta$ is similar the case we just proved. So
(i) does not hold.

\item[(ii)] This case is equivalent to (i), therefore (ii) does not hold.

\item[(iii)] Assume that $p_3$ and $p_5$ are the end-points of $\beta$. Then, since the arc
$[q_1,q_2]$ contains no branch points of $G$ and $p_3$ is an end-point of $\beta$, either
$[q_1, p_3]\subset \beta$ or $[p_3,q_2]\subset \beta$.

Suppose that $[q_1, p_3]\subset \beta$. As in ($i$), since $p_1, p_2 \in\text{int}(\beta)$, we have 
that the arc $[p_1,q_1]$ and $[q_1,p_2]$ are contained in $\beta$. Implying that the nondegenerate simple triod 
$\left([q_1, p_3] \cup [p_1,q_1] \cup [q_1,p_2]\right) \subset \beta$, which is a contradiction.

Now assume that $[p_3,q_2]\subset \beta$. Then the arc $[p_5,q_3]\subset \beta$. Again,
the same argument as in (i) leads to a nondegenerate simple triod being contained in $\beta$
since $p_6, p_7\in\text{int}(\beta)$. Hence (iii) does not hold.
\end{enumerate}

This proves that there is no arc containing $\set{p_1, p_2, \dots , p_7}$. Therefore $G$ is not
$7$--ac.
\end{proof}

Since every $(n+1)$--ac space is $n$--ac, we have the following corollary.

\begin{corollary}
A finite graph with three or more branch points cannot be a $n$--ac, for $n\geq 7$.
\end{corollary}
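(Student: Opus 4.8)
The plan is to reduce immediately to the case $n=7$, which is exactly Theorem~\ref{no7wac}, using only the elementary monotonicity of the $n$--ac properties already recorded in the Introduction. Concretely, I would argue by contradiction: suppose $G$ is a finite graph with at least three branch points and that $G$ is $n$--ac for some $n \ge 7$.

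The key (and only) step is to observe that $n$--ac implies $7$--ac whenever $n \ge 7$. This follows by iterating the implication ``$(k+1)$--ac $\Rightarrow$ $k$--ac'': given any seven points $p_1,\dots,p_7$ in $G$, pad the list with arbitrarily chosen additional points $p_8,\dots,p_n$ (for instance, take $p_i = p_7$ for $i>7$, or any points of $G$), apply $n$--ac to obtain an arc $\alpha$ containing $\{p_1,\dots,p_n\}$, and note $\alpha$ in particular contains $\{p_1,\dots,p_7\}$. Hence $G$ is $7$--ac.

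But then Theorem~\ref{no7wac} applies directly: a finite graph with three or more branch points cannot be $7$--ac, contradicting the previous paragraph. Therefore no such $G$ can be $n$--ac for any $n \ge 7$, proving the corollary.

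There is essentially no obstacle here; the statement is a formal consequence of Theorem~\ref{no7wac} together with the trivial downward monotonicity of $n$--arc connectedness in $n$. The only thing worth spelling out is the padding argument showing $n$--ac $\Rightarrow$ $7$--ac, which is why I isolate it as the one substantive line of the proof.
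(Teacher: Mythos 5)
Your proposal is correct and follows exactly the paper's route: the paper derives this corollary from Theorem~\ref{no7wac} via the remark that every $(n+1)$--ac space is $n$--ac, which is precisely your padding argument showing $n$--ac implies $7$--ac for $n \ge 7$. Nothing further is needed.
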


\begin{lemma}
If $G$ is a finite graph with only $2$ branch points each of degree greater than or equal to $4$, then
$G$ is not $7$--ac.
\end{lemma}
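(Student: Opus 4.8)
The plan is to argue by contradiction: assume $G$ is $7$--ac and produce four points of $G$ lying on no common arc, so that $G$ fails to be even $4$--ac. First I would pin down the shape of $G$. Since $7$--ac implies $5$--ac, Corollary~\ref{nodegree5n} forces both branch points $q_1,q_2$ to have degree \emph{exactly} $4$ (degree $\geq 5$ being already excluded). I then record the combinatorial structure: because the only branch points of $G$ are $q_1$ and $q_2$, the closure of each component of $G\setminus\{q_1,q_2\}$ is of one of four types --- an arc joining $q_1$ to $q_2$ (a \emph{bridge}), a loop at $q_1$, a loop at $q_2$, or a terminal edge at $q_1$ or at $q_2$. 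Call these closures the \emph{edges} of $G$; their interiors are pairwise disjoint. Connectivity of $G$ forces at least one bridge, and the degree count at each $q_i$ (its degree equals the number of bridges, plus twice the number of loops at $q_i$, plus the number of terminal edges at $q_i$, and this equals $4$) shows that $G$ has at least four edges; the extreme configurations are exactly ``two points joined by four arcs'' and ``two points joined by two arcs, with a loop at each point.''

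The heart of the argument is that an arc in a graph with only two branch points cannot be too spread out. Precisely: if $\alpha$ is any arc in $G$ and $S=\alpha\cap\{q_1,q_2\}$, then deleting the at most two points of $S$ from the arc $\alpha$ leaves at most three subarcs, each of which is a connected subset of $G\setminus\{q_1,q_2\}$ and hence lies inside a single edge. Consequently $\alpha$ is covered by at most three edges, so $\alpha$ meets the interior of at most three edges of $G$.

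To finish, I would choose one interior point $m_e$ on each edge $e$ of $G$. These points are pairwise distinct (edge interiors are disjoint) and there are at least four of them, yet by the previous paragraph no arc of $G$ contains more than three of them. Hence $G$ is not $4$--ac, contradicting the assumption that $G$ is $7$--ac; so $G$ is not $7$--ac.

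I expect the only delicate step to be the clean verification that ``$G$ has no branch point other than $q_1$ and $q_2$,'' together with injectivity of $\alpha$, genuinely confines each component of $G\setminus\{q_1,q_2\}$ --- and each subarc of $\alpha\setminus S$ --- to a single edge; the degree bookkeeping and the edge count are then routine. As a by--product this argument yields the stronger assertion that such a $G$ is not $4$--ac; if one wants only the stated conclusion, the same edge count together with the cruder bound on how many edges an arc can meet already suffices.
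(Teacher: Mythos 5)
Your proof is correct, and it takes a genuinely different route from the paper's. The paper stays inside the triod machinery it has already built: it places one point $p_1$ in the interior of an edge joining $q_1$ to $q_2$ and three further points near each $q_i$ so as to form two simple $4$--ods meeting only in $p_1$, and then applies Corollary~\ref{triodendpt} to three different triods ($\{p_2,p_3,p_4\}$, $\{p_5,p_6,p_7\}$ and $\{p_1,p_2,p_3\}$) to force the putative arc to have three distinct endpoints. You instead decompose $G$ into the closures of the components of $G\setminus\{q_1,q_2\}$, verify by the degree bookkeeping at $q_1$ and $q_2$ that there are at least four such ``edges,'' and observe that an arc, having at most three components after the at most two points of $\alpha\cap\{q_1,q_2\}$ are deleted, meets the interiors of at most three of them. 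All the steps check out (in particular the count: with $b$ bridges, $\ell_i$ loops and $t_i$ terminal edges at $q_i$, one gets $E=d_1+d_2-b-\ell_1-\ell_2\ge (d_1+d_2)/2\ge 4$ since $\ell_i\le (d_i-b)/2$). Your argument is more elementary --- it needs neither Lemma~\ref{4ptsarc} nor Corollary~\ref{triodendpt} --- uses four test points instead of seven, and gives the strictly stronger conclusion that such a $G$ is not even $4$--ac; this is the same ``one point in the interior of each edge'' device the paper itself deploys at the end of Case~3 of Theorem~\ref{ins}. Two minor presentational points: the reduction to degree exactly $4$ via Corollary~\ref{nodegree5n} is harmless but unnecessary, since the inequality above already works for $d_i\ge 4$; and because you frame everything as a contradiction starting from ``$G$ is $7$--ac,'' the advertised by--product ``$G$ is not $4$--ac'' technically needs the edge count run without the degree-$4$ cap --- which that same inequality supplies.
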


\begin{proof}
Let $q_1$ and $q_2$ be the two branch points of $G$. Then there exists at least one edge $e$ having
$q_1$ and $q_2$ as vertices. Let $p_1\in \text{int}(e)$. Since $G$ is a finite graph, and $q_1$ and $q_2$ have
degree at least 4, we can chose three points $p_2$, $p_3$, $p_4$ in a neighborhood of $q_1$ such that
$T_1=[q_1, p_1] \cup [q_1, p_2] \cup [q_1, p_3] \cup [q_1, p_4]$ is a simple $4$-od, and three points
$p_5$, $p_6$, $p_7$ in a neighborhood of $q_2$ such that $T_2=[q_2, p_1] \cup [q_2, p_5] \cup [q_2, p_6] \cup [q_2, p_7]$
is a simple $4$--od, and they are such that $T_1\cap T_2 =\set{p_1}$.

We show by contradiction, that there is no arc $\alpha\subset G$ containing $\set{p_1, p_2, \dots , p_7}$.
For this suppose that there exists such an arc $\alpha$, assume further that the end--points of $\alpha$ belong to
$\set{p_1, p_2, \dots , p_7}$. Then, since $\set{p_2, p_3, p_4}$ satisfy the conditions of Corollary~\ref{triodendpt}, we
can assume without loss of generality that $p_4$ is an end--point of $\alpha$. Similarly for the set $\set{p_5, p_6, p_7}$,
so we can assume without loss of generality that $p_5$ is an end--point of $\alpha$. On the other hand, the set
$\set{p_1, p_2, p_3}$ also satisfies the conditions of Corollary~\ref{triodendpt}, hence $p_1$, or $p_2$ or $p_3$ is an
end-point of $\alpha$ which is impossible since $\alpha$ only has two end-points. This shows that there is no
arc in $G$ containing $\set{p_1, p_2, \dots , p_7}$. This proves that $G$ is not $7$--ac.
\end{proof}

\begin{corollary}
If $G$ is a finite graph with only $2$ branch points each of degree greater than or equal to $4$, then
$G$ is not $n$--ac, for $n\geq 7$.
\label{no24}
\end{corollary}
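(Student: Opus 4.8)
This corollary is an immediate consequence of the preceding lemma together with the elementary implication, recorded in the Introduction, that every $(n+1)$--ac space is $n$--ac (and hence, by induction, every $n$--ac space is $m$--ac for every $m \le n$). The plan is simply to argue by contraposition on $n$: suppose, for some $n \ge 7$, that $G$ is $n$--ac. Then since $7 \le n$, the chain of implications `$n$--ac $\Rightarrow$ $(n-1)$--ac $\Rightarrow \dots \Rightarrow$ $7$--ac' forces $G$ to be $7$--ac. But the preceding lemma states precisely that a finite graph with exactly two branch points, each of degree at least $4$, is not $7$--ac --- a contradiction. Therefore no such $G$ can be $n$--ac for any $n \ge 7$.

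There is essentially no obstacle here: the entire content has already been established in the lemma, and the corollary merely propagates the negative conclusion upward through the hierarchy of arc-connectedness properties. One could also phrase it, as the analogous Corollary after Theorem~\ref{no7wac} does, by citing the generic observation `since every $(n+1)$--ac space is $n$--ac' and leaving the one-line deduction to the reader. I would keep the proof to a single sentence of that form, matching the style already used for the corresponding corollary of Theorem~\ref{no7wac}.
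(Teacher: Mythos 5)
Your proposal is correct and matches the paper exactly: the paper states this corollary without proof, relying (as in the corollary following Theorem~\ref{no7wac}) on the observation that every $(n+1)$--ac space is $n$--ac, so any $n$--ac graph with $n\ge 7$ would be $7$--ac, contradicting the preceding lemma. Nothing further is needed.
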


\begin{theorem}
Let $G$ be a finite graph. If $G$ is $7$--ac, then $G$ is one of the following graphs: arc, simple closed curve, figure eight, lollipop, dumbbell or theta--curve.
\label{ins}
\end{theorem}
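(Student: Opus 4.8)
The plan is to combine the structural restrictions already proved with a case analysis on the number of branch points of $G$. Since a $7$--ac graph is in particular $5$--ac, Corollary~\ref{nodegree5n} shows every branch point of $G$ has degree $3$ or $4$; Theorem~\ref{no7wac} gives that $G$ has at most two branch points; and Corollary~\ref{no24} adds that if $G$ has exactly two branch points they cannot both have degree $\geq 4$. Throughout I will use the elementary counting principle that an arc $\alpha$ passes through a point $x$ using at most two of the ``local directions'' at $x$ (exactly one if $x$ is an end--point of $\alpha$), and that a loop attached at a vertex uses up two of that vertex's directions; consequently, whenever a branch point carries three pairwise independent features (tails, loops, or edges to the other branch point), one can pick a point interior to each such feature and conclude there is no arc through those (three or four) points, contradicting $3$--ac or $4$--ac.

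When $G$ has no branch point, every vertex has degree at most $2$, so the connected graph $G$ is an arc or a simple closed curve. When $G$ has exactly one branch point $q$, deleting $q$ leaves finitely many components whose closures, containing no branch point of $G$, are either arcs from $q$ to an end--point (``tails'') or simple closed curves through $q$ (``loops''); writing $t$ and $\ell$ for the numbers of tails and loops, $t+2\ell=\deg q\in\{3,4\}$. The possibilities $(t,\ell)\in\{(3,0),(4,0),(2,1)\}$ give respectively a simple triod, a simple $4$--od, and a loop with two tails, none of which is $3$--ac; the remaining possibilities $(t,\ell)=(1,1)$ and $(t,\ell)=(0,2)$ are exactly the lollipop and the figure eight.

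When $G$ has exactly two branch points $q_1,q_2$, assume without loss of generality $\deg q_1=3$. Deleting $\{q_1,q_2\}$ leaves components whose closures are tails at $q_1$ or $q_2$, loops at $q_1$ or $q_2$, or ``bridges'' (arcs from $q_1$ to $q_2$); let $t_i,\ell_i$ count the tails and loops at $q_i$ and $b\geq 1$ the bridges (connectedness forces $b\geq 1$), so $t_1+2\ell_1+b=3$ and $t_2+2\ell_2+b=\deg q_2\in\{3,4\}$. Running through the finitely many nonnegative solutions, the counting principle kills every configuration in which some $q_i$ carries two tails alongside the bridge, or a loop alongside a tail and the bridge, or in which there is a tail together with two or more bridges (in each such case an explicit choice of four points, one interior to each edge at the offending branch point, admits no common arc). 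The only survivors are: no tails or loops and $b=3$, which is the theta curve; and $\ell_1=\ell_2=1$, $b=1$, no tails, which is the dumbbell. Combining the three cases yields the six graphs listed.

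I expect the main obstacle to be organizing the two--branch--point case cleanly, since there Corollary~\ref{no24} only eliminates the degree pair $(4,4)$: the configurations with degree pair $(3,3)$ or $(3,4)$ --- in particular ``theta curve plus a tail'', ``dumbbell plus a tail'', and ``simple closed curve with a tail at each of two points'' --- must each be shown non--$7$--ac by an explicit four--point obstruction, verifying that any arc through those points would be forced to use at least three local directions at a degree--$\geq 3$ vertex (or to revisit a branch point). A secondary point requiring care is justifying completeness of the list of component types obtained by deleting the branch points, which rests on the fact that each such component contains no branch point of $G$ and hence is an arc or simple closed curve attached in one of the stated ways.
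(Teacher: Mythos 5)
Your proposal is correct and arrives at the same list, but it is organized differently from the paper's proof. The paper first passes to the \emph{reduced graph} $K$ of $G$ (Corollary~\ref{nwacreduced}), classifies $K$ using Theorem~\ref{no7wac}, Corollary~\ref{nodegree5n} and Corollary~\ref{no24} together with a parity argument on the degrees of the two branch points, and only then argues that reattaching terminal edges forces $G=K$ except in the lollipop case. You instead work with $G$ directly, decomposing it at its (at most two) branch points into tails, loops and bridges and enumerating the solutions of $t_i+2\ell_i+b=\deg q_i$. Your route buys a self-contained enumeration that does not need the reduction machinery and makes the ``why each rejected configuration fails'' step uniform (count components of $G$ minus the branch points versus components of an arc minus one or two interior points); the paper's route buys a shorter case analysis because stripping tails first means each case involves fewer configurations. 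Both leave comparable amounts of routine verification to the reader.

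One caveat on your ``counting principle'': as stated (``whenever a branch point carries three pairwise independent features \ldots there is no arc through those points'') it is false --- each branch point of the theta curve carries three bridges, yet the theta curve is $\omega$--ac, precisely because the three bridges are \emph{not} separated by that branch point alone. The obstruction you actually need is: if $k$ chosen points lie in $k$ distinct components of $G\setminus S$ where $S$ is a set of $m$ branch points, and $k>m+1$, then no arc contains all $k$ points (an arc minus $m$ interior points has at most $m+1$ components). Your ``two tails plus a bridge'' and ``loop plus tail plus bridge'' cases are killed with $m=1$, $k=3$; but the ``tail plus two or more bridges'' cases require $m=2$ and $k=4$, so three points ``one interior to each edge at the offending branch point'' do not suffice when that point has degree $3$ --- you must also use a fourth point drawn from a feature at the other branch point. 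Checking your table of configurations, four such points in four components of $G\setminus\{q_1,q_2\}$ do exist in every configuration you reject, so the argument goes through; but the principle should be stated in this component-counting form to avoid accidentally excluding the theta curve.
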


\begin{proof}
Let $G$ be a finite graph. Suppose that $G$ is $n$--ac, for $n\geq 7$. We will show that $G$ is (homeomorphic to) one of the listed graphs.

Let $K$ be the reduced graph of $G$. By Corollary~\ref{nwacreduced} $K$ is $n$--ac and contains no terminal edges.
By Theorem~\ref{no7wac} $K$ has at most two branch points, and by Corollary~\ref{nodegree5n} the degree of each branch point
is at most $4$. We consider the cases when $K$ has no branch points, one branch point or two branch points.

\medskip
\paragraph{{\bf Case 1:}  $K$ has no branch points.} In this case $K$ is either homeomorphic to the arc, $I$, or to the simple closed curve, $S^1$. 

Assume first that $K$ is homeomorphic to $I$, then, by the way $K$ is obtained, $K=G$. Otherwise, reattaching the last 
terminal edge that was removed gives a simple triod which is not $7$--ac, contrary to the hypothesis.
In this case $G$ is on the list.

Next assume $K$ is homeomorphic to $S^1$. If $K=G$, then $G$ is on the list. So assume $G\neq K$, and let $e$
denote the last terminal edge that was removed. Then $K\cup e$ is homeomorphic to the lollipop curve. Furthermore,
$G= K\cup e$, otherwise reattaching the penultimate terminal edge will give a homeomorphic copy of
the graph (a) of Example~(E) which is not $7$--ac, or a simple closed curve with two arcs attached
to it at the same point at one of their end points which is not $7$--ac either. 
Hence, again, $G$ is on the list.

\medskip
\paragraph{{\bf Case 2:}  $K$ has one branch point.} Note that the only possibility for $K$ to have a single
branch point of degree $3$ is for $K$ to be homeomorphic to a simple triod or to the lollipop curve, the
former is not $7$--ac and the latter is not a reduced graph. Hence the degree of the branch point of $K$ is $4$.
In this case $K$ is homeomorphic to either a simple $4$--od, a simple closed curve with two arcs attached
to it at the same point at one of their end points, or to the figure eight curve. The first two cases are not
$7$--ac. Therefore $K$ must be homeomorphic to the figure eight curve. If $G=K$, then $G$ is on the list.
In fact, since attaching an arc to the figure eight curve yields a non $7$--ac curve, we must have that $G=K$.

\medskip
\paragraph{{\bf Case 3:} $K$ has two branch points.} Since the sum of the degrees in a graph is 
always even and $K$ has no terminal edges, then $K$ can not have one branch point of degree $3$ 
and another of degree $4$. Hence the only options are that $K$ has two branch points of 
either degree $3$ or degree $4$.
However, by Corollary~\ref{no24}, $K$ has only two branch points of degree $3$.

If $K$ has two branch points of degree $3$, then it could be homeomorphic to one of the following graphs.

\begin{center}
\begin{tabular}{ccccc}
(a) & (b) & (c) & (d) & (e) \\
\begin{tikzpicture}[vertex/.style={circle,draw=blue!50,fill=blue!20,thick, inner sep=0mm, minimum size=1mm},
  point/.style={coordinate}]

\node[vertex] (base) {};
\node[vertex] (top) at (0,2cm) {};
\node[vertex] (base_l) at (-0.5cm,0) {};
\node[vertex] (base_r) at (0.5cm,0) {};
\node[vertex] (top_l) at (-0.5cm,2cm) {};
\node[vertex] (top_r) at (0.5cm,2cm) {};

\draw (base.north) -- (top.south);
\draw (base_l.east) -- (base.west);
\draw (base.east) -- (base_r.west);
\draw (top_l.east) -- (top.west);
\draw (top.east) -- (top_r.west);

\end{tikzpicture}
&
\begin{tikzpicture}[vertex/.style={circle,draw=blue!50,fill=blue!20,thick, inner sep=0mm, minimum size=1mm},
  point/.style={coordinate}]
\node at (0,-1) {};
\node  at (0,1) {};

\draw (0,0) ellipse (6mm and 7mm);
\node[vertex] (v1) at (180:10mm) {};
\node[vertex] (v2) at (0:10mm) {};
\node[vertex] (v3) at (180:6mm) {};
\node[vertex] (v4) at (0:6mm) {};

\draw (v1) -- (v3);
\draw (v2) -- (v4);
\end{tikzpicture}
&
\begin{tikzpicture} [vertex/.style={circle,draw=blue!50,fill=blue!20,thick, inner sep=0mm, minimum size=1mm}, point/.style={coordinate}]

\node[vertex] (lollipop_bot) {};
\node[vertex] (lollipop_mid) [above=of lollipop_bot] {}; 
\node[point] (pt) [above=of lollipop_mid] {};
\node[vertex] (base_1) at (-0.5cm,0) {};
\node[vertex] (base_2) at (0.5cm,0) {};

\draw  (lollipop_bot.north)  -- (lollipop_mid.south);
\draw  (lollipop_mid.east) to [out=10,in=10,looseness=1.5] (pt);
\draw   (pt) to [out=170, in=170,looseness=1.5] (lollipop_mid.west);
\draw (base_1.east) -- (lollipop_bot.west);
\draw (lollipop_bot.east) -- (base_2.west);
\end{tikzpicture}
&
\begin{tikzpicture}
 [vertex/.style={circle,draw=blue!50,fill=blue!20,thick, inner sep=0mm, minimum size=1mm},
  point/.style={coordinate}]
\node at (0,-1) {};
\node  at (0,1) {};

\node[vertex] (dumbbell_left) {};
\node[vertex] (dumbbell_right) at (0.5cm,0) {}; 
\node[point] (pt1) [left=of dumbbell_left] {};
\node[point] (pt2) [right=of dumbbell_right] {};

\draw  (dumbbell_left.east)  -- (dumbbell_right.west);
\draw  (dumbbell_left.north) to [out=90, in=90,looseness=1.5] (pt1.north);
\draw   (pt1.south) to [out=270, in=270,looseness=1.5] (dumbbell_left.south);
\draw  (dumbbell_right.north) to [out=90, in=90,looseness=1.5] (pt2.north);
\draw   (pt2.south) to [out=270, in=270,looseness=1.5] (dumbbell_right.south);
\end{tikzpicture}
&
\begin{tikzpicture}
 [vertex/.style={circle,draw=blue!50,fill=blue!20,thick, inner sep=0mm, minimum size=1mm},
  point/.style={coordinate}]
\node at (0,-1) {};
\node  at (0,1) {};

\node[vertex] (theta_left) {};
\node[vertex] (theta_right) [right=of theta_left] {};

\draw (theta_left.east) to (theta_right.west) {};
\draw  (theta_left.north) to [out=90, in=90,looseness=2.5] (theta_right.north);
\draw (theta_right.south) to [out=270, in=270,looseness=2] (theta_left.south);
\end{tikzpicture}

\end{tabular}
\end{center}

However the graphs (a), (b), and (c) contain terminal edges. So $K$ can only be homeomorphic 
to the dumbbell (d) or the $\theta$--curve (e); in any case if $G=K$, then $G$ is on the list. 
Note that neither curve, (d) nor (e), can be obtained from a $n$--ac graph ($n\geq 7$) by removing 
a terminal edge since by Theorem~\ref{no7wac} the edge has to be attached to one of the existing branch points; 
it is easy to see that such a graph is not $4$--ac, just take a point on the interior of each edge. Hence $G=K$.
This ends the proof of the theorem.
\end{proof}

\subsection{$\aleph_0$--ac Continua}\label{aleph0}
Call a space  $\kappa$--ac, where $\kappa$ is a cardinal, if every subset of size no more than $\kappa$ is contained in an arc. Note that for finite $\kappa=n$ and $\kappa=\aleph_0$ this coincides with the earlier definitions. For infinite $\kappa$ we have a complete description of $\kappa$--ac continua (not necessarily metrizable), extending Theorem~\ref{main2}. To start let us observe that the arc is $\kappa$--ac for every cardinal $\kappa$. We will see shortly that the arc is the only separable $\kappa$--ac continuum when $\kappa$ is infinite. In particular, the triod and circle are not $\aleph_0$--ac, and so any continuum containing a triod or circle is also not $\aleph_0$--ac. This observation will be used  below.

To state the theorem precisely we need to make a few definitions. Recall that $\omega_1$ is the first uncountable ordinal, or equivalently the set of all countable ordinals, with the induced order topology. Note that a subset of $\omega_1$ is bounded if and only if the set is countable.
The {\it long ray}, $R$, is the lexicographic product of $\omega_1$ with $[0,1)$ with the order topology. We can identify $\omega_1$ (with its usual order topology) with $\omega_1 \times \{0\}$. Evidently $\omega_1$ is cofinal in the long ray. Write $R^-$ for $R$ with each point $x$ relabeled $-x$. The {\it long line}, $L$, is the space obtained by identifying $0$ in  the long ray, $R$, with $-0$ in $R^-$. The topology on the long ray and long line ensures that for any $x < y$ in $R$ (or $L$) the subspace $[x,y]=\{z \in R : x \le z \le y\}$ is (homeomorphic to) an arc. Note that any countable subset of the long ray, or the long line, is bounded, hence both the long ray and long line are $\aleph_0$--ac. To see this for the long ray take any countable subset $S$ then since $\omega_1$ is cofinal in $R$ the set $S$ has an upper bound, $x$ say, and then $S$ is contained in $[0,x]$, which is an arc.

Let $\alpha R$ be the one point compactification of $R$, and $\gamma L$ be the corresponding two point compactification of $L$. The {\it long circle} and {\it long lollipop} are the spaces obtained from $\alpha R$ by identifying the point at infinity to $0$, or any other point, respectively. The {\it long dumbbell}, {\it long figure eight} and {\it long theta} curves come from $\gamma L$ by respectively identifying the negative ($-\infty$) and positive ($+\infty$) endpoints to $-1$ and $+1$, $0$ and $0$, or $+1$ and $-1$. As continuous injective images of the $\aleph_0$--ac spaces $R$ and $L$, all the above spaces are also $\aleph_0$--ac. 

\begin{theorem} \label{aleph0_ch}
Let $K$ be a continuum.
\begin{itemize}
\item[1)] If $K$ is separable and $\aleph_0$--ac then $K$ is an arc.
\item[2)] If $K$ is non--separable, then the following are equivalent:

(i) $K$ is $\aleph_0$--ac, (ii) $K$ is the continuous injective image of a closed sub--interval of the long line, and (iii) $K$ is one of: the long circle, the long lollipop, the long dumbbell, long figure eight,  or the long theta--curve.
\item[3)] If $K$ is $\kappa$--ac for some $\kappa > \aleph_0$, then $K$ is an arc. 
\end{itemize}
\end{theorem}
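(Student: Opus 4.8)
The plan is to prove the three parts in a way that each feeds the next. For part~(1), let $K$ be a separable $\aleph_0$--ac continuum. The first step is to observe that, as noted just before the statement, $K$ can contain no simple triod and no simple closed curve; for if it did, picking a countable dense set together with the three (respectively, two) ``ends'' of the triod (circle) configuration would force an arc through a configuration that no arc can realize — more carefully, a triod is not $3$--ac and a circle is not $\aleph_0$--ac (an arc meeting a dense subset of a circle would have to be the whole circle, which is not an arc), and these obstructions are inherited by any continuum containing such a subspace. A metrizable continuum that contains no simple closed curve is a dendrite, and a dendrite with no simple triod is an arc or a point; being a nondegenerate continuum it is an arc. (Alternatively, one argues directly: a separable arcwise connected space in which every countable set sits in an arc must, by a back-and-forth / cofinality argument on a countable dense set, be covered by a single arc.) So part~(1) reduces to classical continuum theory once the two forbidden subspaces are excluded.

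For part~(2), the implication (iii)$\Rightarrow$(ii) is essentially by construction — each of the five listed spaces is visibly a continuous injective image of a closed sub--interval $[x,y]$ of the long line, as spelled out in the paragraph preceding the theorem — and (ii)$\Rightarrow$(i) is already recorded there, since $L$ is $\aleph_0$--ac and $\aleph_0$--ac passes to Hausdorff continuous injective images. The real content is (i)$\Rightarrow$(iii). Here I would first show that a non--separable $\aleph_0$--ac continuum $K$ again contains no triod and no simple closed curve (same argument as in part~(1): a triod fails $3$--ac; a circle fails $\aleph_0$--ac because a countable dense subset of the circle cannot lie on a proper sub-arc, and the whole circle is not an arc). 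Next, fix a point $p$ and consider the arc components structure: for any countable $S$ there is an arc through $S \cup \{p\}$. I would use this to build an increasing $\omega_1$--chain of arcs $\alpha_\xi$ through $p$ whose union exhausts $K$: at successor stages enlarge by one point, at limit stages of countable cofinality the union of the arcs so far is a countable increasing union of arcs through a common point, hence an arc (or a ray), which is again contained in a single arc of $K$ by $\aleph_0$--ac; non-separability guarantees the process does not terminate before $\omega_1$ and an $\omega_1$-indexed analysis of where the ``growth'' happens shows $K$ is obtained from one or two long rays by identifying endpoints — the absence of triods forces at most two ``branch directions'' out of the core, and the absence of circles plus the classification of what can happen at the two compactifying points pins down the five cases exactly. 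This bookkeeping — showing the only possible identification patterns are the five listed — is the main obstacle, and it mirrors (at the ``long'' scale) the finite-graph case analysis of Theorem~\ref{ins}: one shows the core is a long line or long ray with at most two ends, and then enumerates the ways the end(s) may be glued to the core or to each other subject to no triod and no circle arising.

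For part~(3), suppose $K$ is $\kappa$--ac for some $\kappa > \aleph_0$. If $K$ were separable it would be an arc by part~(1) (and the arc is trivially $\kappa$--ac for all $\kappa$). If $K$ were non--separable it would be one of the five long curves of part~(2); but each of those has a subset of size $\aleph_1$ that cannot be covered by an arc — e.g. in the long circle, taking $\aleph_1$ points cofinal in the long ray together with the identified point yields a set whose only ``arc'' through it would be a sub-interval of the long circle cofinal from both sides, forcing the whole long circle, which is not an arc; analogous arguments handle the lollipop, dumbbell, figure eight and theta. Since $\aleph_1 \le \kappa$, this contradicts $\kappa$--ac. Hence $K$ is an arc. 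The only real work in part~(3) is checking that none of the five long curves is $\aleph_1$--ac, which is a short, uniform cofinality argument, so I expect part~(2)'s identification-pattern analysis to remain the crux of the whole theorem.
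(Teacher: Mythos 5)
Your overall skeleton matches the paper's (exclude triods and circles, handle the separable case by density, classify the non--separable case, and deduce part~3 from parts~1 and~2 by a density/cofinality argument), and your part~3 is essentially the paper's argument. But there are two real problems. First, in part~1 your primary route rests on the claim that a metrizable continuum containing no simple closed curve is a dendrite; this is false, since dendrites are by definition \emph{locally connected}, and there are non--locally--connected continua with no simple closed curve (and even no simple triod) that are not arcs. The correct argument is the one you relegate to a parenthesis, and it needs no back-and-forth: take a countable dense set $D$, use $\aleph_0$--ac to put $D$ inside an arc $A$, and note $A$ is closed, so $A\supseteq\overline{D}=K$.

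Second, and more seriously, the crux of the theorem is (i)$\Rightarrow$(iii) of part~2, and your proposal does not actually prove it: you sketch an $\omega_1$--length recursion and then state that ``an $\omega_1$--indexed analysis of where the growth happens'' together with ``bookkeeping'' pins down the five cases, explicitly calling this the main obstacle. That bookkeeping is precisely what requires the two substantial arguments in the paper. One is the construction (Proposition~\ref{nonsepaleph0}) of a continuous \emph{bijection} from a closed unbounded subinterval of the long line onto $K$, done by Zorn's Lemma on partial injections; even the limit stages of your recursion need care (a countable increasing union of arcs need not be an arc or ray --- one must pass to a countable dense subset and invoke $\aleph_0$--ac again, as in Lemma~\ref{l1}, and then rule out non--injectivity of the extension using the absence of circles and triods). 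The other is Proposition~\ref{11image}: given such a bijection from the long ray or long line, one must show the topology of $K$ can differ from the order topology at only \emph{one} point (respectively, at the two ends), which the paper proves via a compactness argument combined with the Pressing Down Lemma on a stationary set of limit ordinals. Nothing in your proposal substitutes for this step; ``absence of triods forces at most two branch directions'' does not by itself control how the one--point (or two--point) compactification gets glued back into the image. Until that classification is carried out, part~2, and hence the non--separable half of part~3, remains unproved.
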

For part 1) just take a dense countable set, then any arc containing the dense set is the whole space. Part 2) is proved in Proposition~\ref{nonsepaleph0} ((i) $\implies$ (ii)), Proposition~\ref{11image} ((ii) $\implies$ (iii)), while (iii) $\implies$ (i) was observed above with the definition of the curves in 2) (iii). For part 3) note that all non--separable $\aleph_0$--ac spaces (as listed in part 2) (iii)) have a dense set of size $\aleph_1$, and so are not $\aleph_1$--ac. Thus $\kappa$--ac continua for $\kappa \ge \aleph_1$ are separable, hence an arc, by part 1).

It is traditional to use Greek letters ($\alpha, \beta$ etcetera) for ordinals. Consequently we will use the letter `$A$' and variants for arcs, and because in Proposition~\ref{nonsepaleph0} we need to construct a map, in this subsection by an `arc' we mean any homeomorphism between a homeomorph of the closed unit interval and a given space. If $K$ is a space, then by `$A$ is an arc in $K$' we mean the arc $A$ maps into $K$. When $A$ is an arc in a space $K$, then write $\mathop{im}(A)$ for the image of $A$ (it is, of course, a subspace of $K$ homeomorphic to the closed unit interval). For any function $f$, we write $\dom{f}$, for the domain of $f$.

\begin{proposition}{\label{nonsepaleph0}}
Let $K$ be an $\aleph_0$--ac non--separable continuum.  Then there is a continuous bijection $A_\infty : J_\infty\rightarrow K$ where $J_\infty$ is a closed unbounded sub--interval of the long line, $L$. 
\end{proposition}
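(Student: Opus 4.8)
The plan is to build the continuous bijection $A_\infty : J_\infty \to K$ by transfinite recursion of length $\omega_1$, exhausting $K$ with an increasing chain of arcs whose images cover $K$. Start by fixing a dense set $\{d_\alpha : \alpha < \omega_1\}$ in $K$; this is possible since $K$ is non-separable but any continuum has a dense subset of size at most $\mathfrak{c}$, and in fact one can argue that a non-separable $\aleph_0$--ac continuum has a dense set of size exactly $\aleph_1$ (if the density were larger, apply $\aleph_0$--ac carefully, or simply re-enumerate a sufficiently large dense set cofinally — the recursion below will only consume $\omega_1$ points anyway, and continuity plus the chain of arcs will force the union to be everything). At stage $\alpha$ I will have constructed an arc $A_\alpha$ in $K$ (a homeomorphism from a copy of $[0,1]$ onto its image) together with a designated ``right endpoint'', arranged so that $\mathop{im}(A_\beta) \subsetneq \mathop{im}(A_\alpha)$ for $\beta < \alpha$, each $A_\beta$ is an initial segment of $A_\alpha$ (i.e. $A_\alpha$ restricted to an initial subinterval of its domain equals $A_\beta$ up to reparametrization), and $\{d_\beta : \beta < \alpha\} \subseteq \mathop{im}(A_\alpha)$.

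The successor step is where $\aleph_0$--ac does the real work. Given $A_\alpha$ with right endpoint $p_\alpha$, I want to extend it to an arc $A_{\alpha+1}$ that contains $d_\alpha$, strictly beyond $p_\alpha$. Apply $\aleph_0$--ac to the countable set consisting of a countable dense subset of $\mathop{im}(A_\alpha)$ together with $d_\alpha$: this yields an arc $B$ containing all of these. Since $\mathop{im}(A_\alpha)$ is a sub-continuum of $K$ meeting the arc $\mathop{im}(B)$ in a dense (hence, by closedness, the whole) copy of $[0,1]$, $\mathop{im}(A_\alpha)$ is a subarc of $\mathop{im}(B)$; reparametrize and possibly reverse $B$ so that $A_\alpha$ sits as an initial segment, then let $A_{\alpha+1}$ be the portion of $B$ running from the start of $A_\alpha$ up to a point past $d_\alpha$. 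One must check $d_\alpha \notin \mathop{im}(A_\alpha)$ can be assumed (else nothing to do) and that the extension is strictly longer, so the images form a strictly increasing chain; the designated domains $\dom{A_\alpha}$ are then glued to form a single linearly ordered ``long'' domain.

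At limit stages $\lambda$, take $A_\lambda = \bigcup_{\alpha<\lambda} A_\alpha$ with domain $\bigcup_{\alpha<\lambda}\dom{A_\alpha}$. The key point is that this union is again an arc: the domains glue to an ordered set order-isomorphic to a half-open interval of the form $[0,1)$-type (countably many copies of $[0,1)$ stacked, i.e. a bounded-length piece of $R$), which is homeomorphic to a half-open interval and hence \emph{not} compact, so one must be careful — the union $A_\lambda$ is a \emph{one-to-one continuous map} from this half-open long interval into $K$, not literally an ``arc'' in the earlier sense, but that is exactly the flexibility the paragraph preceding the proposition has already granted (and we do not need compactness at limit stages). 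Continuity and injectivity pass to the union because any point of the domain lies in some $\dom{A_\alpha}$ and the $A_\beta$ agree on overlaps.

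Finally, after $\omega_1$ stages, set $J_\infty$ to be the resulting domain — a linearly ordered space built from $\omega_1$-many copies of intervals, which is order-isomorphic to a closed unbounded (cofinal) sub-interval $[0,\cdot)$-type piece, i.e. $[x_0, \infty)$, of the long line $L$ — and $A_\infty = \bigcup_{\alpha<\omega_1} A_\alpha$. Injectivity and continuity are inherited as before. Surjectivity is the crucial closing argument: $\mathop{im}(A_\infty) \supseteq \{d_\alpha : \alpha<\omega_1\}$, which is dense in $K$; and $\mathop{im}(A_\infty)$ is closed in $K$ because it is the image of a closed subset of the long line under a map that is ``proper enough'' — more precisely, any convergent net in $K$ with terms in $\mathop{im}(A_\infty)$ has its preimages eventually in a bounded (hence compact) piece $\mathop{im}(A_\alpha)$ for some $\alpha<\omega_1$ (using that $K$ is non-separable so a single arc cannot be dense, combined with a cofinality/pressing-down argument on $\omega_1$), so the limit lies in $\mathop{im}(A_\alpha) \subseteq \mathop{im}(A_\infty)$. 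Hence $\mathop{im}(A_\infty)$ is dense and closed, so equals $K$, and $A_\infty$ is the desired continuous bijection. The main obstacle I anticipate is precisely this last step — verifying that the image is closed, i.e. that no sequence from the long ``arc'' can escape to a limit outside it — which requires exploiting non-separability to rule out a single countable sub-arc accumulating everywhere, and I expect the paper handles it via the observation (made just before the proposition) that the arc, triod, and circle are the only obstructions and a non-separable $\aleph_0$--ac continuum has a dense set of size exactly $\aleph_1$ with every proper initial arc being a ``small'' (separable, bounded) piece.
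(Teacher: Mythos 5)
Your overall strategy --- building an increasing chain of arcs whose domains glue into a closed unbounded subinterval of $L$ --- is the same as the paper's, which runs the construction via Zorn's Lemma rather than transfinite recursion. But there are two genuine gaps. First, your successor step only ever extends to the \emph{right}, keeping $A_\alpha$ as an initial segment with a fixed left endpoint. This cannot work when $K$ is, say, the long dumbbell: if $\mathop{im}(A_\alpha)$ is a subarc $[-1/2,1/2]$ of the middle bar and $d_\alpha=-3/4$, then \emph{every} arc containing $\mathop{im}(A_\alpha)\cup\{d_\alpha\}$ contains $[-3/4,1/2]$ and places $d_\alpha$ on the side of the left endpoint of $A_\alpha$; ``possibly reversing $B$'' does not change which side of $A_\alpha$ (with its already-fixed orientation inherited from earlier stages) the new point sits on. You must allow the domain to grow on both ends --- this is exactly why the paper's Lemma~\ref{l3} is an either/or dichotomy and why the maximal domain can end up being all of $L$, not just a ray.

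Second, your closing surjectivity argument is not established. It rests on (i) the enumerated set $\{d_\alpha:\alpha<\omega_1\}$ being dense, which requires knowing in advance that $K$ has density exactly $\aleph_1$ --- you assert this but do not prove it, and it is not obvious a priori; and (ii) the claim that $\mathop{im}(A_\infty)$ is closed, for which you offer only a vague appeal to a ``pressing-down argument.'' Continuous injective images of closed unbounded subintervals of $L$ need not be closed without further argument, so this step cannot be waved through. The paper avoids both issues: it takes a \emph{maximal} element $A_\infty$ and shows directly that a point $y\notin\mathop{im}(A_\infty)$ leads either to a proper extension (contradicting maximality, via Lemma~\ref{l3}) or to a triod or a separable subcontinuum with three non-cutpoints inside $K$, contradicting $\aleph_0$--ac. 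A smaller but real issue: at limit stages your domain is half-open, and your next successor step implicitly assumes $\mathop{im}(A_\lambda)$ is an arc; you need the monotonicity argument (plus the no-circle observation) to close up the domain, which the paper supplies in its chain-upper-bound step.
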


We prove this by an application of Zorn's Lemma. The following lemmas help to establish that Zorn's Lemma is applicable, and that the maximal object produced is as required. 
\begin{lemma}{\label{l1}} Let $K$ be an $\aleph_0$--ac non--separable continuum.
If $\mathcal{K}$ is a countable collection of separable subspaces of $K$ then there is an arc $A$ in $K$ such that $\bigcup \mathcal{K}\subseteq \mathop{im}(A)$.
\end{lemma}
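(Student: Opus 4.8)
\textbf{Proof proposal for Lemma~\ref{l1}.}

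The plan is to reduce the general case to the case of a single separable subspace, and then to build the required arc by a recursive construction that exhausts a countable dense subset. First I would observe that if $\mathcal{K} = \{ K_m : m \in \NA \}$ then $\bigcup \mathcal{K}$ is itself separable: pick a countable dense set $D_m$ in each $K_m$, and then $D = \bigcup_m D_m$ is a countable set whose closure contains every $K_m$, hence contains $\bigcup\mathcal{K}$. So it suffices to prove: for every separable subspace $S$ of $K$ there is an arc $A$ in $K$ with $S \subseteq \mathop{im}(A)$. Fixing a countable dense subset $\{ s_n : n \in \NA \}$ of $S$, it is in fact enough to find an arc whose image contains $\{ s_n : n \in \NA \}$, because the image of an arc is closed (it is compact, being a continuous image of $[0,1]$, and $K$ is Hausdorff), so it will automatically contain the closure of $\{ s_n \}$, which is $S$ (note $S$ need not be closed in $K$, but its closure certainly contains $S$).

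Now the heart of the matter: given a countable set $\{ s_n : n \in \NA\} \subseteq K$, produce a single arc containing all of them. The naive attempt is to use $\aleph_0$--ac directly on $\{s_n : n \in \NA\}$, but that is precisely the hypothesis $\aleph_0$--ac, so \emph{this step is immediate} — $\aleph_0$--ac says every countable subset of $K$ is contained in an arc, and $\{ s_n : n\in\NA\}$ is a countable subset. So in fact there is nothing to prove beyond the two reductions above. Let me restate the argument cleanly: let $\mathcal{K} = \{K_m : m \in \NA\}$ be the given countable collection of separable subspaces; choose for each $m$ a countable dense $D_m \subseteq K_m$; set $S = \bigcup_m D_m$, a countable subset of $K$; apply $\aleph_0$--ac to obtain an arc $A$ in $K$ with $S \subseteq \mathop{im}(A)$; since $\mathop{im}(A)$ is compact, hence closed in the Hausdorff space $K$, we get $\overline{S} \subseteq \mathop{im}(A)$; and $\overline{S} \supseteq \overline{D_m} \supseteq K_m$ for every $m$, so $\bigcup \mathcal{K} \subseteq \mathop{im}(A)$, as required.

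So there is no real obstacle here; the only subtlety to get right is the reduction from subspaces to points and the use of closedness of arc images to pass from a countable dense set back to the subspaces it is dense in. In writing this up I would be careful to note explicitly that $\aleph_0$--ac is being invoked for the countable \emph{point} set $S$, not for the subspaces, and to record the elementary fact that continuous images of $[0,1]$ in a Hausdorff space are closed, since that is what lets a single arc swallow each separable $K_m$ in its entirety once it contains a dense subset. If one wanted to avoid even the word ``separable'' for the subspaces and phrase everything in terms of the already-granted countable choice of dense sets, the proof is three lines; I would present it in that compact form.
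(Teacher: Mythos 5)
Your proof is correct and is essentially identical to the paper's: choose a countable dense subset of each member of $\mathcal{K}$, apply $\aleph_0$--ac to the countable union of these, and use the fact that the (compact, hence closed) image of the resulting arc must contain the closure of that union. The opening talk of a ``recursive construction'' is a red herring you rightly abandon; the clean restatement in your second paragraph is the paper's argument verbatim in substance.
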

\begin{proof}
Let $\mathcal{K}=\{S_n : n \in \mathbb{N}\}$ be a countable family of subspaces of $K$, and, for each $n$,  let $D_n$ be a countable dense subset of $S_n$. Let $D=\bigcup_n D_n$ --- it is countable. Since $K$ is $\aleph_0$--ac there is an arc $A$ in $K$ such that $D \subseteq \mathop{im}(A)$. As $D$ is dense in $\bigcup \mathcal{K}$ and $\mathop{im}(A)$ is closed, we see that $\bigcup \mathcal{K}  \subseteq \mathop{im}(A)$. 
\end{proof}

\begin{lemma}{\label{l3}} Let $K$ be an $\aleph_0$--ac non--separable continuum. Suppose $[a,b]$ is a proper closed subinterval of $L$ (or $R$), $A : [a,b] \to K$ is an arc in $K$ and $y\in K\setminus \mathop{im}(A)$.  Then either (i) for every $c>b$  in $L$ there is an arc $A':[a,c]\rightarrow K$ such that $A'\restriction_{[a,b]} =A$ and $A'(c)=y$, or (ii) for every $c<a$  in $L$ there is an arc $A':[c,b]\rightarrow K$ such that $A'\restriction_{[a,b]} =A$ and $A'(c)=y$.   
\end{lemma}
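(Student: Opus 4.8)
\textbf{Proof proposal for Lemma~\ref{l3}.}

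The plan is to extend the arc $A$ to ``reach'' the point $y$. First I would exploit $\aleph_0$--ac to produce, in one move, an arc $B$ in $K$ whose image contains both $\mathop{im}(A)$ and $y$: applying Lemma~\ref{l1} to the countable collection $\mathcal{K}=\{\mathop{im}(A),\{y\}\}$ (both separable, $\mathop{im}(A)$ because it is a continuous image of $[a,b]$) yields an arc $B:[u,v]\to K$ with $\mathop{im}(A)\cup\{y\}\subseteq\mathop{im}(B)$. Now $B^{-1}(\mathop{im}(A))$ is a compact connected subset of $[u,v]$, hence a closed subinterval $[u',v']$, and $B\restriction_{[u',v']}$ is an arc onto $\mathop{im}(A)$. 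Composing with the homeomorphism $A^{-1}$ we get a homeomorphism $h = A^{-1}\circ B\restriction_{[u',v']} : [u',v'] \to [a,b]$; $h$ is either increasing or decreasing. Also $y=B(t_y)$ for a unique $t_y\notin[u',v']$, so either $t_y>v'$ or $t_y<u'$.

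Next I would transport the relevant tail of $B$ back along $h$. Suppose, say, $h$ is increasing and $t_y>v'$. Then $B\restriction_{[u',t_y]}$ is an arc from $\mathop{im}(A)$'s copy of $a$ out to $y$, and it agrees with $A$ on $[u',v']$ up to the reparametrization $h$. Fix any $c>b$ in $L$. Reparametrize: glue $A$ on $[a,b]$ to a copy of $B\restriction_{[v',t_y]}$ stretched homeomorphically onto $[b,c]$, matching endpoints at $b$ (both send it to $B(v')=A(a)$... more precisely to the point $A(b)$ after composing correctly). This produces the desired $A':[a,c]\to K$ with $A'\restriction_{[a,b]}=A$ and $A'(c)=y$; and since $c>b$ was arbitrary we get conclusion~(i). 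The three remaining sign combinations ($h$ increasing with $t_y<u'$, $h$ decreasing with $t_y>v'$, $h$ decreasing with $t_y<u'$) are handled identically, each time landing in either (i) or (ii): the point is that $h$ carries the endpoint of $[u',v']$ adjacent to $t_y$ to one of the two endpoints $a$ or $b$ of $[a,b]$, and the tail of $B$ beyond $t_y$ is attached there, extending either to the right (giving (i)) or to the left (giving (ii)).

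The main obstacle is bookkeeping rather than mathematics: one must be careful that $B\restriction_{[u',v']}$ really is \emph{onto} $\mathop{im}(A)$ and injective (so that $h$ is genuinely a homeomorphism and the glued map $A'$ stays injective), that the tail $B\restriction$ beyond $t_y$ meets $\mathop{im}(A)$ only at the single shared endpoint (otherwise injectivity of $A'$ fails) --- this is where we use that $B$ itself is injective, so $B([v',t_y])\cap B([u',v']) = \{B(v')\}$ --- and that the long-line intervals $[a,c]$ or $[c,b]$ are themselves arcs, which is exactly the stated topological property of $L$ and $R$. A minor point: $[a,b]$ being a \emph{proper} subinterval of $L$ guarantees there is room to extend on at least one side, but in fact both conclusions are phrased for all $c$ beyond the respective endpoint, and the argument above shows we always land in at least one of (i), (ii). Once these injectivity checks are discharged, continuity of $A'$ is immediate from the pasting lemma and continuity of $B$.
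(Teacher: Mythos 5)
Your proposal is correct and follows essentially the same route as the paper: apply Lemma~\ref{l1} to $\{\mathop{im}(A),\{y\}\}$ to get one arc containing both, observe that the preimage of $\mathop{im}(A)$ is a closed subinterval with $y$'s parameter lying strictly to one side, and then splice a reparametrized copy of the tail onto the appropriate end of $A$, with injectivity of the big arc guaranteeing the glued map is still an arc. The only cosmetic difference is that the paper pre-composes with $t\mapsto 1-t$ to normalize the orientation and thus handles two cases rather than your four.
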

\begin{proof} Fix $a,b$, the arc $A$ and $y$.
Let $\mathcal{K}=\{\mathop{im}(A) , \{y\}\}$, and apply Lemma~\ref{l1} to get an arc $A_0:[0,1] \to K$ in $K$ such that $\mathop{im}(A_0) \supseteq \mathop{im}(A) \cup \{y\}$. Let $J=A_0^{-1} (\mathop{im}(A))$,  $a'=\min J$, $b'=\max J$ and $c'=A_0^{-1} (y)$. Without loss of generality (replacing $A_0$ with $A_0 \circ \rho$ where $\rho (t)=1-t$ if necessary) we can suppose that $A_0(a')=A(a)$ and $A_0(b')=b$.

Since $y\not\in \mathop{im}(A)$, either $c'>b'$ or $c'<a'$. Let us suppose that $c' >b'$. This will lead to case (i) in the statement of the lemma. The other choice will give, by a very similar argument which we omit, case (ii). Take any $c$ in $L$ such that $c>b$. Let $A_1$ be a homeomorphism of the closed subinterval $[a,c]$ of $L$ with the subinterval $[a',c']$ of $[0,1]$ such that $A_1(a)=a'$, $A_1(b)=b'$ and $A_1(c)=c'$. Set $A_2=A_0 \circ A_1 : [a,c] \to K$. So $A_2$ is an arc in $K$ such that $A_2(a)=A(a)$, $A_2(b)=A(b)$,  $A_2(c)=y$ and $A_2([a,b])= \mathop{im} (A)$. The arc $A_2$ is almost what we require for $A'$ but it may traverse the (set) arc $\mathop{im}(A)$ at a `different speed' than $A$. Thus we define $A':[a,c] \to K$ to be equal to $A$ on $[a,b]$ and equal to $A_2$ on $[b,c]$. Then $A'$ is the required arc. 
\end{proof}

\begin{proof}{\bf (Of Proposition~\ref{nonsepaleph0})} 
Let $\mathcal{A}$ be the set of all continuous injective maps $A: J \to K$ where $J$ is a closed subinterval of $L$, ordered by: $A \le A'$ if and only if $\dom{A} \subseteq \dom{A'}$ and $A'\restriction_{\dom{A}} = A$. Then $\mathcal{A}$ is the set of all candidates for the map we seek, $A_\infty$. 
We will apply Zorn's Lemma to $(\mathcal{A},\le)$ to extract $A_\infty$. To do so we need to verify that $(\mathcal{A},\le)$ is non--empty, and all non--empty chains have upper bounds.

As $K$ is $\aleph_0$--arc connected we know there are many arcs in $K$, so the set $\mathcal{A}$ is not empty.
Now take any non--empty chain $\mathcal{C}$ in $\mathcal{A}$. We show that $\mathcal{C}$ has an upper bound.
Let $\mathcal{J}=\{ \dom{A'} : A' \in \mathcal{C}\}$. Since $\mathcal{J}$ is a chain of subintervals in $L$, the set $J = \bigcup \mathcal{J}$ is also a subinterval of $L$. Define $A:J \to K$ by $A(x)=A'(x)$ for any $A'$ in $\mathcal{C}$ with $x \in \dom{A'}$. Since $\mathcal{C}$ is a chain of injections, $A$ is well--defined and injective. Since the domains of the functions in $\mathcal{C}$ form a chain of subintervals, any point $x$ in $J$ is in the $J$--interior of some $\dom{A'}$ (there is a set $U$, open in $J$ such that $x \in U \subseteq \dom{A'}$), where $A' \in \mathcal{C}$, and so $A$ coincides with $A'$ on some $J$--neighborhood of $x$, thus, since $A'$ is continuous at $x$, the map $A$ is also continuous at $x$. If $J$ is closed, then we are done: $A$ is in $\mathcal{A}$ and $A \ge A'$ for all $A'$ in $\mathcal{C}$.

If the interval $J$ is not closed then it has at least one endpoint (in $L$) not in $J$. We will suppose $J=(a,\infty)$. The other cases, $J=(a,b)$ and $J=(-\infty,a)$, can be dealt with similarly. We show that we can continuously extend $A$ to $[a,\infty)$. If so then $A$ will be injective, hence in $\mathcal{A}$, and  an upper bound for $\mathcal{C}$. Indeed the only way the extended $A$ could fail to be injective was if $A(a)=A(c)$ for some $c>a$, and then $A([a,c])$ is a circle in $K$, contradicting the fact that $K$ is $\aleph_0$--ac.

Evidently it suffices to continuously extend $A'=A\restriction _{(a,b]}$ to $[a,b]$.
Let $\mathcal{K}=\{ A( (a,b])\}$ and apply Lemma~\ref{l3} to see that $A'$ maps the half open interval, $(a,b]$, into (a homeomorphic copy of) the closed unit interval. So we can apply some basic real analysis to get the extension. Indeed the map $A'$ is continuous and injective, and hence strictly monotone. By the inverse function theorem, $A'$ has a continuous inverse, and so is a homeomorphism  of $(a,b]$ with some half open interval,  $(c,d]$ or $[d,c)$ in the closed unit interval. Defining $A(a)=c$ gives the desired continuous extension.

Let $A_\infty$ be a maximal element of $\mathcal{A}$. Then its domain is a closed subinterval of the long line, $L$. 
We first check that $\dom{A_\infty}$ is not bounded. Then we prove that $A_\infty$ maps onto $K$.

If $A_\infty$ had a bounded domain, then it is an arc. So it has separable image. As $K$ is not separable we can pick a point $y$ in $K \setminus \mathop{im}(A_\infty)$. Applying Lemma~\ref{l3} we can properly extend $A_\infty$ to an arc $A'$. But then $A'$ is in $\mathcal{A}$, $A_\infty \le A'$ and $A_\infty \ne A'$, contradicting maximality of $A_\infty$.

We complete the proof by showing that $A_\infty$ is surjective. We go for a contradiction and suppose that instead there is  a point $y$ in $K \setminus \mathop{im}(A_\infty)$. Two cases arise  depending on the domain of $A_\infty$. 

Suppose first that $\dom{A_\infty}=L$.  Pick a point $x$ in $\mathop{im}(A_\infty)$. Pick an arc $A$ from $x$ to $y$. Taking a subarc, if necessary, we can suppose $A:[0,1] \to K$, $A(0)=x$ and $A(t) \notin \mathop{im}(A_\infty)$ for all $t >0$. Let $x'=A_{\infty}^{-1}(x)$. Pick any $a',b'$ from $L$ such that $a' < x' < b'$. Then the subspace $A_\infty ([a',b']) \cup A([0,1])$ is a triod in $K$, which contradicts $K$ being $\aleph_0$--sac.

Now suppose that $\dom{A_\infty}$ is a proper subset of $L$. Let us assume that $\dom{A_\infty}=[a,\infty)$. (The other case, $\dom{A_\infty}=(-\infty,a]$, follows similarly.) Pick any $b>a$, and apply Lemma~\ref{l3} to $A=A_{\infty}\restriction_{[a,b]}$ and $y$. If case (ii) holds then pick any $c<a$ and $A$ can be extended `to the left' to an arc $A'$ with domain $[c,b]$. This gives a proper extension of $A_\infty$ defined on $[c,\infty)$ (which is $A'$ on $[c,a]$ and $A_\infty$ on $[a,\infty)$), contradicting maximality of $A_\infty$.

So case (i) must hold. Pick any $c >b$, and we get an arc $A':[a,c] \to K$ in $K$ extending $A$. Let $T=A_{\infty} ([a,c]) \cup A'([a,c])$. Observe that $T$ has at least three non cutpoints, namely $A'(a)=A_{\infty}(a)$, $A_\infty(c)$ and $A'(c)$. So $T$ is not an arc, but it is a separable subcontinuum of the $\aleph_0$--ac continuum $K$, which is the desired contradiction.
\end{proof}

To complete the proof of Theorem~\ref{aleph0_ch} it remains to identify the continuous injective images of closed sub--intervals of the long line. We recall some basic definitions and facts connected with the space of countable ordinals, $\omega_1$ (see \cite{Kunen}, for example). A subset of $\omega_1$ is \emph{closed and unbounded} if it is cofinal in $\omega_1$ and closed in the order topology. A countable intersection of closed and unbounded sets is closed and unbounded. The set $\Lambda$ of all limit ordinals in $\omega_1$ is a closed and unbounded set. A subset of $\omega_1$ is \emph{non-stationary} if it is contained in the complement of a closed and unbounded set. A subset of $\omega_1$ is \emph{stationary} if it is not non-stationary, or equivalently if it meets every closed and unbounded set. The Pressing Down Lemma (also known as Fodor's lemma) states than if $S$ is a stationary set and $f: S \to \omega_1$ is regressive (for every $\alpha$ in $S$ we have $f(\alpha) < \alpha$) then there is a $\beta$ in $\omega_1$ such that $f^{-1}(\beta)$ is cofinal in $\omega_1$.
  
\begin{proposition}\label{11image}
If $K$ is a non--separable continuum and is the continuous injective image of a closed sub--interval of the long line, then $K$ is one of: the long circle, the long lollipop, the long dumbbell, long figure eight,  or the long theta--curve.
\end{proposition}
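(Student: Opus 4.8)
The plan is to analyze the continuous bijection $f : J \to K$, where $J$ is a closed subinterval of $L$. Since $K$ is non-separable, $J$ must itself be non-separable, hence unbounded; so $J$ is (order-isomorphic to) one of $[a,\infty)$, $(-\infty,a]$, or all of $L$. In the first two cases $J$ is a copy of the long ray $R$, and in the third $J$ is $L$ itself. The key elementary observation is that $f$, being a continuous bijection from a (locally compact) ordered space to a compact Hausdorff space, is in fact a closed map on every \emph{bounded} subinterval (those being compact), so $f$ restricted to any bounded closed subinterval is an embedding. Thus the only way $f$ can fail to be injective-with-closed-image "globally" is through identifications forced at the ends of $J$; the whole problem is to see what $K$ looks like "at infinity".

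First I would treat the long-ray case, $J=[0,\infty) \cong R$. Consider $C = \overline{f(\omega_1)}$ where I identify $\omega_1$ cofinally inside $R$; more usefully, look at the set of points $p \in K$ such that $p \in \overline{f([\alpha,\infty))}$ for all $\alpha$ — call this the \emph{remainder} $K_\infty$. Using compactness of $K$ and the fact that the sets $f([\alpha,\infty))$ are nested closed (indeed, since their complements $f([0,\alpha))$ are open — $[0,\alpha)$ is open in $R$? no, it is not closed, but $f([0,\alpha])$ is compact hence closed, so $f((\alpha,\infty))$ is open) we get that $K_\infty = \bigcap_\alpha f([\alpha,\infty))$ is a nonempty subcontinuum. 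I claim $K_\infty$ is a single point. This is the heart of the argument and where I expect the main obstacle: one must show the long ray cannot "wind around" so as to make the remainder bigger than a point. I would argue via the Pressing Down Lemma. Suppose $K_\infty$ contained two distinct points $p \neq q$; pick disjoint open $U \ni p$, $V \ni q$ in $K$. Then $f^{-1}(U)$ and $f^{-1}(V)$ are disjoint open subsets of $R$, each cofinal (since $p,q \in K_\infty$ each meets every tail). Now for each limit ordinal $\alpha$ in a club set one can, using cofinality of both preimages, find points of $f^{-1}(U)$ and $f^{-1}(V)$ approaching $\alpha$ from below in an alternating fashion — but then $f$ restricted to the compact arc $[0,\alpha]$ would have to cross between $U$ and $V$ infinitely often near $\alpha$, contradicting continuity of $f$ at the point $\alpha$ (its image $f(\alpha)$ lies in exactly one of the closures, say not in $\overline{U}$, yet points arbitrarily close to $\alpha$ map into $U$ whose complement's closure... ). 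Making this clean is the delicate step; the regressive-function/Fodor machinery is there precisely to locate a single $\beta$ and a cofinal set of $\alpha$'s with $f^{-1}(U)\cap(\beta,\alpha)\neq\emptyset$ uniformly, producing the contradiction. Granting $K_\infty=\{p_\infty\}$, the map $f$ extends to a continuous injection on the one-point compactification $\alpha R$ sending $\infty \mapsto p_\infty$ — \emph{unless} $p_\infty$ is already in $f([0,\infty))$, say $p_\infty = f(a)$. Then $K$ is $f(\alpha R)$ with $\infty$ glued to the interior point $a$ (if $a>0$) giving the long lollipop, or glued to the endpoint $0$ giving the long circle.

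For the long-line case $J = L$, the same analysis applies independently at $+\infty$ and at $-\infty$, producing remainder points $p_{+\infty}$ and $p_{-\infty}$ (each a singleton, by the ray argument applied to each tail). Then $f$ factors through the two-point compactification $\gamma L$, and $K$ is obtained from $\gamma L$ by the identifications $+\infty \sim p_{+\infty}$, $-\infty \sim p_{-\infty}$. Since $f$ is already injective on $L$ and $p_{\pm\infty} \in \{f(+1), f(0), f(-1)\}$ up to relabeling the finitely many "special" interior points that can be hit (here one uses that any identification of $+\infty$ with an \emph{interior} non-special point would create a triod — three non-cut-points — hence a non-arc separable subcontinuum, contradicting $\aleph_0$-ac as in the proof of Proposition~\ref{nonsepaleph0}, so actually $K$ must itself be $\aleph_0$-ac here — wait, $K$ is only assumed to be a non-separable continuum that is such an image; but by Theorem~\ref{aleph0_ch}(2)(iii)$\Rightarrow$(i) direction we may legitimately just enumerate), the possible gluings of $\{+\infty,-\infty\}$ to $\{+1,0,-1\}$ that yield a continuum with no triod are exactly: $+\infty\mapsto -1, -\infty\mapsto +1$ (long dumbbell), $+\infty\mapsto 0, -\infty\mapsto 0$ (long figure eight), $+\infty\mapsto +1, -\infty\mapsto -1$ (long theta). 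Any other choice either leaves an endpoint free (contradicting that $K$, being a continuum, has the wrong structure — a long ray with a tail is not compact / or produces a non-separable space with a free arc-end that is still an image but then equals one of the listed long curves after all) or produces a branch point of degree $\geq 3$ with the right count. Finally I would collect the cases: ray with $\infty$ to endpoint $=$ long circle; ray with $\infty$ to interior point $=$ long lollipop; and the three long-line gluings, exhausting the list. The one genuinely non-routine part, to repeat, is the singleton-remainder claim; everything after that is bookkeeping of finitely many gluing patterns.
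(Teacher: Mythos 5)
Your core strategy is the same as the paper's: reduce to the long ray, and use $\omega_1$--combinatorics to show that the $K$--topology can differ from the order topology only at a single point per end. Your route to that key fact is a correct variant of the paper's: the paper covers $K$ by finitely many open sets each missing a tail and applies the Pressing Down Lemma to one that meets the limit ordinals stationarily, whereas you show the remainder continuum $K_\infty=\bigcap_\alpha \overline{f([\alpha,\infty))}$ is a singleton via disjoint cofinal preimages. Your version works (and in fact you do not need Fodor: shrink $U,V$ to have disjoint closures using normality of $K$, note that the set of $\alpha$ which are limits from below of a cofinal subset of $R$ is club, intersect two clubs, and get $f(\alpha)\in\overline{U}\cap\overline{V}$ by continuity on $[0,\alpha]$ where the topologies agree). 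One small step you skip: ``$K_\infty$ is a singleton'' only says every neighborhood of $p_\infty$ \emph{meets} every tail, while to see $K$ as a quotient of $\alpha R$ you need that every neighborhood of $p_\infty$ \emph{contains} a tail. This follows by one more compactness argument: if the open set $U\ni p_\infty$ contained no tail, the nested nonempty compact sets $\overline{f([\alpha,\infty))\setminus U}$ would have a common point, which lies in $K_\infty\setminus\{p_\infty\}$.

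The genuine soft spot is your endgame for the long--line case. You try to restrict the gluing points to $\{+1,0,-1\}$ and to exclude ``bad'' gluings by a no--triod argument; but $K$ is \emph{not} assumed $\aleph_0$--ac in this proposition, and appealing to the (iii)$\Rightarrow$(i) direction of Theorem~\ref{aleph0_ch} is circular --- that direction only says the listed curves are $\aleph_0$--ac, not that an arbitrary $K$ satisfying (ii) is. Moreover the gluing points $a_{\pm}$ (the two remainder points) are not chosen by you; they can be arbitrary points of $L$, possibly equal. The good news is that no exclusion is needed, because \emph{every} configuration lands on the list: in the ray case, $a=0$ gives the long circle and $a>0$ the long lollipop; in the line case, $a_+=a_-$ gives the long figure eight, $a_-<a_+$ the long dumbbell, and $a_+<a_-$ the long theta (the segment $[a_+,a_-]$ becomes the third edge joining two order--three points). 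So the correct fix is to delete the triod argument entirely and simply read off the homeomorphism type from the relative position of $a_+$ and $a_-$; with that replacement your proof is complete.
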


\begin{proof} The closed non--separable sub--intervals of the long line are (up to homeomorphism) just the long ray and long line, itself. 

Let us suppose for the moment that the $K$ is the continuous injective image of the long ray, $R$. We may as well identify points of $K$ with points in $R$. Note that on  any closed subinterval, $[a,b]$ say, of $R$, (by compactness of $[a,b]$ in $R$, and Hausdorffness of $K$) the standard order topology and the $K$--topology coincide. It follows that at any point with a bounded $K$--open neighborhood the standard topology and $K$--topology agree.
We will show that there is a point $x$ in $R$ such that every $K$--open $U$ containing $x$ contains a tail, $(t,\infty)$, for some $t$. Assuming this, then by Hausdorffness of $K$, every  point distinct from $x$ has bounded neighborhoods, and so $x$ is the only point where the $K$--topology differs from the usual topology. Then $K$ is either the long circle or long dumbbell depending on where $x$ is in $R$ (in particular, if it equals $0$). The corresponding result for continuous injective images of the long line follows immediately.

Suppose, for a contradiction, that for every $x$ in $R$, there is a $K$--open set $U_x$ containing $x$ such that $U_x$ contains no tail. By compactness of $K$, some finite collection, $U_{x_1}, \ldots , U_{x_n}$, covers $K$. Let $S_{i}=U_{x_i} \cap \Lambda$, where $\Lambda$ is the set of limits in $\omega_1$. Then (since the finitely many $S_i$ cover the closed unbounded set $\Lambda$) at least one of the $S_i$ is stationary. Take any $\alpha$ in $S_i$, and consider it as a point of the closed subinterval $[0,\alpha]$ of $R$, where we know the standard topology and the $K$--topology agree. Since $\alpha$ is a limit point which is in $U_{x_i} \cap [0,\alpha]$, and this latter set is open, we know there is ordinal $f(\alpha)<\alpha$ such that $(f(\alpha),\alpha] \subseteq U_{x_i}$. Thus we have a regressive map, $f$, defined on the stationary set $S_i$, so by the Pressing Down Lemma there is a $\beta$ such that $f^{-1} (\beta)$ is cofinal in $\omega_1$. Hence $U_{x_i}$ contains $\bigcup \{ (f(\alpha),\alpha] :  \alpha \in f^{-1}(\beta)\} = (\beta,\infty)$, and so $U_{x_i}$ does indeed contain a tail.  
\end{proof}

\section{Complexity of Characterizations}\label{cplxty}
Theorem~\ref{main3} from the Introduction makes certain claims about the complexity of characterizing, for various $n$, the $n$--ac graphs which are not $(n+1)$--ac. We introduce the necessary technology from descriptive set theory to make these claims precise. Then Theorem~\ref{main3formal} is the formalized version of Theorem~\ref{main3}. 

 Recall (see \cite{kech}) that the Borel subsets of a space ramify into a hierarchy, $\Pi_\alpha, \Sigma_\alpha$, indexed by countable ordinals. Sets lower in the hierarchy are less complex than those found higher up.  Most relevant here are: $\Pi_3$ which is the set of $F_{\sigma \delta}$ subsets, $\Sigma_3$ which is the set of all $G_{\delta \sigma}$ subsets, and  $D_2(\Sigma_3)$ the set of intersections of one $\Pi_3$ and one $\Sigma_3$ set.  

The complexity of a set in terms of its position in the Borel hierarchy is precisely correlated to the complexity of the logical formulae needed to define it. A $\Pi_3$ set, $S$, can be defined by a formula, $\phi$ (via $S=\{ x : \phi (x) \text{ is true}\}$), of the form $\forall p \exists q \forall r \ \text{(something simple)}$, where the quantifiers run over {\it countable} sets, and `something simple' is boolean.  A $\Sigma_3$ set, $T$, can be defined by a formula, $\psi$, of the form $\exists p \forall q \exists r \ \text{(something simple)}$. While a $D_2(\Sigma_3)$ set can be defined by a formula of the form $\phi \land \psi$, where $\phi$ and $\psi$ are as above. 

For example, let $S_3^* = \{ \alpha \in 2^{\mathbb{N} \times \mathbb{N}} : \exists J \, \forall j >J \, \exists k \ \alpha (j,k)=0\}$, and $P_3 = \{ \beta \in 2^{\mathbb{N} \times \mathbb{N}} : \forall j \, \exists K \, \forall k \ge K \ \beta (j,k) =0\}$. Then $S_3^*$ is $\Sigma_3$, and $P_3$ is $\Pi_3$ in $2^{\mathbb{N} \times \mathbb{N}}$.  And $S_3^* \times P_3$ is a $D_2(\Sigma_3)$ subset of $\left( 2^{\mathbb{N} \times \mathbb{N}} \right)^2$.

For a class of subsets $\Gamma$, a set $A$ is $\Gamma$--hard if $A$ is not in any proper subclass, while it is $\Gamma$--complete if it $\Gamma$--hard and in $\Gamma$. In other words, $A$ is $\Gamma$--complete if and only if it has complexity precisely $\Gamma$. It is known \cite{kech} that $S_3^*$ is $\Sigma_3$--complete, $P_3$ is $\Pi_3$--complete, and $S_3^* \times P_3$ is $D_2(\Sigma_3)$--complete. We can re--phrase these last two statements as follows: there is a formula characterizing $P_3$ of the form, $\forall \exists \forall$ but we can be {\sl certain} that {\it no logically simpler characterizing formula exists}, and there is a  formula characterizing $S^*_3 \times P_3$ of the form, $(\exists \forall \exists) \land (\forall \exists \forall)$ but  no logically simpler characterizing formula exists.

Let $A \subseteq X$, $B \subseteq Y$ and $f$ a continuous map of $X$ to $Y$ such that $f^{-1}(B)=A$ (such an $f$ is a {\it Wadge reduction}). Note that if $B$ is in some Borel class $\Gamma$, then by continuity so is $A=f^{-1}(B)$. Hence if $A$ is $\Gamma$--hard, then so is $B$.

We work inside the hyperspace $C(I^N)$ of all subcontinua of $I^N$ with the Vietoris topology, which makes it a continuum. In light of the remarks above, it should now be clear that the following is indeed a formal version of Theorem~\ref{main3}.
\begin{theorem}{\label{main3formal}} \ 
Fix $N \ge 2$. Inside the space $C(I^N)$:
\begin{itemize}
\item[1)] the set of graphs which are $\omega$--ac is $\mathbf{\Pi}_3$--complete,

\item[2)] any family of homeomorphism classes of graphs 
 is $\Pi_3$--hard and always  $D_2(\mathbf{\Sigma}_3)$, and 

\item[3)] the set of $n$--ac  not $(n+1)$--ac graphs is $D_2(\mathbf{\Sigma}_3)$--complete, for $n=2,3,4,5$. 
\end{itemize}
\end{theorem}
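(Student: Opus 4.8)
## Proof Proposal for Theorem~\ref{main3formal}

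The plan is to prove the three parts separately, reducing everything to the complexity-theoretic facts recalled above about the pointclasses $\mathbf{\Pi}_3$, $\mathbf{\Sigma}_3$ and $D_2(\mathbf{\Sigma}_3)$, and to the characterizations in Theorems~\ref{main1} and~\ref{ins}. For part~1) I would first show the set $\mathcal{G}_\omega$ of $\omega$--ac graphs is $\mathbf{\Pi}_3$ as a subset of $C(I^N)$. Being a graph is itself a Borel condition, and by Theorem~\ref{main1} a graph is $\omega$--ac iff it is $7$--ac; "$7$--ac" unwinds to "$\forall$ $7$-tuples of points $\exists$ arc $\forall$ \dots", which after coding arcs by sequences of rational polygonal approximations inside $I^N$ gives a $\forall\exists\forall$ form, i.e. $\mathbf{\Pi}_3$. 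For $\mathbf{\Pi}_3$-hardness I would build a Wadge reduction of the known $\Pi_3$-complete set $P_3 \subseteq 2^{\mathbb{N}\times\mathbb{N}}$ to $\mathcal{G}_\omega$: given $\beta \in 2^{\mathbb{N}\times\mathbb{N}}$, continuously construct a subcontinuum $X_\beta$ of $I^N$ which is (homeomorphic to) one of the six $\omega$--ac graphs when $\beta \in P_3$ and which otherwise degenerates to a continuum with a triod or extra branch structure — the usual device is to attach, along a convergent sequence of scales indexed by $j$, little "spikes" whose presence is controlled by whether $\forall k \geq K\ \beta(j,k)=0$ eventually holds, so that a surviving spike destroys the $7$-ac property.

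For part~2), the $D_2(\mathbf{\Sigma}_3)$ upper bound is the structural heart of the argument. Let $\mathcal{F}$ be any union of homeomorphism classes of graphs. I would argue that membership of a subcontinuum $X$ in $\mathcal{F}$ can be written as "$X$ is a graph" $\wedge$ "$X$ lies in $\mathcal{F}$". The first conjunct should be $\mathbf{\Pi}_3$ (a continuum is a graph iff it is locally connected, contains no simple closed curve with a dense complementary arc \dots — more cleanly: iff it is a Peano continuum of dimension one with finitely many branch points, which is a $\forall\exists\forall$-type statement after coding). The second conjunct, \emph{conditioned on $X$ being a graph}, is much simpler: a graph is determined up to homeomorphism by a finite combinatorial datum (its reduced graph together with the terminal-edge data), and extracting that datum from $X \in C(I^N)$ is a Borel operation of bounded complexity, in fact $\mathbf{\Sigma}_3$ (one guesses the finitely many branch points and the incidence pattern — an $\exists$ — then verifies it — a $\forall\exists$). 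So on the graph locus, $\mathcal{F}$ is the preimage of a $\mathbf{\Sigma}_3$ set, hence $\mathbf{\Sigma}_3$, and the conjunction with the $\mathbf{\Pi}_3$ "is a graph" condition lands in $D_2(\mathbf{\Sigma}_3)$. For $\mathbf{\Pi}_3$-hardness of \emph{every} nonempty such $\mathcal{F}$ (and every co-nonempty one — one needs $\mathcal{F}$ to contain at least one graph and miss at least one, which is automatic since there are at least two homeomorphism types), I would again reduce from $P_3$: pick a target graph $G_1 \in \mathcal{F}$ and a graph $G_0 \notin \mathcal{F}$ and build $X_\beta$ with $X_\beta \cong G_1$ iff $\beta \in P_3$ and $X_\beta \cong G_0$ otherwise, using the same spike construction as in part~1) with $G_0, G_1$ chosen to differ by exactly the feature the spikes control.

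For part~3), fix $n \in \{2,3,4,5\}$. The set $\mathcal{A}_n$ of $n$--ac but not $(n+1)$--ac graphs is a union of homeomorphism classes of graphs (being $n$-ac is a topological property), so by part~2) it is automatically $D_2(\mathbf{\Sigma}_3)$ and $\mathbf{\Pi}_3$-hard. To get $D_2(\mathbf{\Sigma}_3)$-\emph{completeness} I must produce a Wadge reduction of the $D_2(\mathbf{\Sigma}_3)$-complete set $S_3^* \times P_3 \subseteq (2^{\mathbb{N}\times\mathbb{N}})^2$ into $\mathcal{A}_n$. The idea: given $(\alpha,\beta)$, build a subcontinuum $X_{\alpha,\beta}$ of $I^N$ that is a graph exactly when the $\mathbf{\Pi}_3$-coordinate $\beta$ says so (handled by a spike gadget as before, attached so that a surviving spike yields a non-graph or a graph outside the relevant finite list), and whose underlying graph, \emph{when it is a graph}, is $n$-ac-not-$(n+1)$-ac exactly when the $\mathbf{\Sigma}_3$-coordinate $\alpha$ says so — this second gadget uses the existence (Examples~(E)) of a minimal graph $H_n$ which is $n$-ac not $(n+1)$-ac, realized so that $X_{\alpha,\beta} \cong H_n$ when $\alpha \in S_3^*$ and $X_{\alpha,\beta}$ is, say, an arc (which is $(n+1)$-ac, hence outside $\mathcal{A}_n$) when $\alpha \notin S_3^*$. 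The two gadgets must be attached to a common arc at separate points so they do not interfere, and one checks the combined object is always a continuum and the map $(\alpha,\beta)\mapsto X_{\alpha,\beta}$ is continuous into $C(I^N)$.

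The main obstacle, and where I would spend most of the effort, is the construction and bookkeeping in parts~2) and~3): designing the spike/gadget families so that (a) the assignment is genuinely continuous in the Vietoris topology on $C(I^N)$, (b) a surviving spike really does knock the object out of the intended homeomorphism class (this is where Lemma~\ref{4ptsarc}, Corollary~\ref{triodendpt} and Theorem~\ref{no7wac}-style arguments get reused to certify that the perturbed graph fails the relevant $m$-ac property), and (c) the two independent gadgets truly decouple, so the $\mathbf{\Sigma}_3$ and $\mathbf{\Pi}_3$ coordinates are read off independently, yielding a genuine product reduction rather than something collapsing to a single pointclass. Verifying that "is a graph" is exactly $\mathbf{\Pi}_3$ and that the homeomorphism-type extraction is $\mathbf{\Sigma}_3$ on the graph locus is routine coding but must be done carefully to make the $D_2(\mathbf{\Sigma}_3)$ ceiling in part~2) airtight.
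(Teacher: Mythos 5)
Your overall architecture --- upper bounds of the shape ``($\Pi_3$ graph condition) $\wedge$ ($\Sigma_3$ type condition)'' and hardness via Wadge reductions from $P_3$ and $S_3^*\times P_3$ built from shrinking gadgets --- matches the paper's, but two of your steps have genuine problems. In part 1) you propose to get the $\mathbf{\Pi}_3$ upper bound by unwinding ``$7$--ac'' directly as a $\forall\exists\forall$ formula, coding arcs by polygonal approximations. This cannot work at the level of arbitrary subcontinua of $I^N$: the quantifier ``there exists an arc through $p_1,\dots,p_7$'' ranges over an uncountable set and is a priori only analytic (arcwise connectedness of continua is in fact known to be non--Borel), so no coding brings ``$\exists$ arc'' down to the second level of the Borel hierarchy. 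The paper never touches the $n$--ac condition analytically; it uses Theorem~\ref{main1} to rewrite $AC_\omega$ as a \emph{finite} union of single homeomorphism classes $H(G)$ and shows each $H(G)$ is $\Pi_3$ via the likeness machinery of Camerlo--Darji--Marcone and Kato--Ye, namely $H(G)=Q(G)=\mathcal{P}\cap\bigl(\mathcal{L}_G\setminus\bigcup_i\mathcal{L}_{G_i}\bigr)$ with $\mathcal{L}_G$ of class $\Pi_2$ and $\mathcal{P}$ of class $\Pi_3$. Your part 2) upper bound has the right shape but the same issue in miniature: ``guess the branch points and verify the incidence pattern'' quantifies over points of $I^N$, and the actual $\Sigma_3$ bound comes from this likeness calculus, not from combinatorial extraction.

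More seriously, your part 3) reduction is provably impossible as designed. You want $F(\alpha,\beta)\cong H_n$ for a \emph{single fixed} graph $H_n$ exactly when $(\alpha,\beta)\in S_3^*\times P_3$ (an arc when $\alpha\notin S_3^*$, a non--graph when $\beta\notin P_3$); then $F^{-1}(H(H_n))=S_3^*\times P_3$. But for any single graph $G$ the class $H(G)$ is $\Pi_3$ (exactly the fact used in Lemma~\ref{omega_c}), so $F^{-1}(H(H_n))$ would be $\Pi_3$, whereas $S_3^*\times P_3$ is $D_2(\Sigma_3)$--complete, hence $\Sigma_3$--hard, hence not $\Pi_3$. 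Any correct reduction must land on infinitely many pairwise non--homeomorphic targets. In the paper, $F(\alpha,\beta)\cong K_J$ where $K_J=K_0\cup\bigcup_{j\le J}\jn{b_j t_j}$ and the index $J$ is read off from the witness to the outer $\exists J$ of $S_3^*$; when $\alpha\notin S_3^*$ infinitely many triods survive, so the object stays Peano but is not any graph (killing the $\Sigma_3$ conjunct), and when $\beta\notin P_3$ a topologist's sine curve appears (killing the $\Pi_3$ Peano conjunct). The existence of an infinite family of homeomorphism types of $n$--ac not $(n+1)$--ac graphs is thus not a convenience but the essential reason $AC_n$ exceeds $\Pi_3$ for $n\le 5$; your single--target design erases it.
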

Claims 1)--3) are the contents of Lemmas~\ref{omega_c}, \ref{between} and Proposition~\ref{n_c}, respectively.

\begin{lemma}\label{between}
Let $\mathcal{C}$ be any collection of graphs. Then $H(\mathcal{C})$, the set of all subcontinua of $I^N$ homeomorphic to some member of $\mathcal{C}$, is $\Pi_3$--hard and in $D_2(\Sigma_3)$. 
\end{lemma}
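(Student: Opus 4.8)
The plan is to prove the two halves separately: the $D_2(\Sigma_3)$ upper bound by exhibiting explicit formulas, and the $\Pi_3$--hardness by a Wadge reduction from the $\Pi_3$--complete set $P_3$.

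For the upper bound, fix the collection $\mathcal{C}$ of graphs. A subcontinuum $X$ of $I^N$ is homeomorphic to a member of $\mathcal{C}$ precisely when (a) $X$ is a graph, and (b) $X$ is homeomorphic to one of the $\mathcal{C}$. Being a graph is a condition on $X$ that should be expressible at low Borel complexity: having only finitely many branch points, each of finite degree, and with the complement of the branch points a finite union of arcs and simple closed curves; one writes this using quantifiers over a countable dense set of $C(I^N)$ (or over rational points of $I^N$) and checks it lands in $\Pi_3$ (in fact the bulk of the work is the ``$\omega$--ac'' style analysis already done, e.g.\ bounding branch points as in Theorem~\ref{no7wac} and Proposition~\ref{degree5}). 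Once $X$ is known to be a graph, ``$X$ is homeomorphic to a fixed finite graph $G$'' is a matter of matching up finitely many branch points, their degrees, and the lengths/adjacency pattern of the connecting arcs --- this is a countable ($\exists \forall$-type, hence $\Sigma_3$ or simpler) condition, and a countable disjunction over the members of $\mathcal{C}$ (there are at most countably many homeomorphism classes of graphs) stays within $\Sigma_3$. Intersecting ``$X$ is a graph'' ($\Pi_3$) with ``$X$ is homeomorphic to some member of $\mathcal{C}$'' ($\Sigma_3$) gives a set in $D_2(\Sigma_3)$, as required. I would be somewhat careful to phrase ``$X$ is a graph'' as genuinely $\Pi_3$ rather than something higher; the key is that the finitely-many-branch-points clause, although it looks like $\exists$(finite set)$\forall$, can be rewritten as $\forall$(countable list of candidate branch points)$\exists$(bound)$\ldots$ since beyond a certain count one of the listed points fails to be a branch point.

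For the lower bound, the plan is to reduce $P_3 = \{\beta \in 2^{\mathbb{N}\times\mathbb{N}} : \forall j\, \exists K\, \forall k \ge K\ \beta(j,k)=0\}$ to $H(\mathcal{C})$. Since $\mathcal{C}$ is nonempty, fix some $G_0 \in \mathcal{C}$; I want a continuous map $\beta \mapsto X_\beta \in C(I^N)$ such that $X_\beta$ is homeomorphic to $G_0$ when $\beta \in P_3$ and $X_\beta$ is \emph{not} a graph at all (hence not homeomorphic to any member of $\mathcal{C}$, as $\mathcal{C}$ consists of graphs) when $\beta \notin P_3$. The construction is the standard one: build $X_\beta$ by taking a fixed copy of $G_0$ and, along a single chosen edge (a subarc $[0,1]$), attach at the point $1/j$ a small ``spike'' (an extra arc) of length governed by $\beta(j,\cdot)$ --- say the spike at $1/j$ has length $\sum_k \beta(j,k) 2^{-k}$ worth of ``hairiness'', arranged so that if $\beta(j,k)=0$ for all large $k$ the attachment is a finite arc (a terminal edge, harmless up to homeomorphism after also matching a fixed such decoration on $G_0$, or better: attach \emph{nothing} in the limit so the spike shrinks to a point and disappears), while if $\beta(j,\cdot)$ has infinitely many $1$'s the ``spike'' converges to a nondegenerate continuum accumulating on the edge, destroying the graph structure near $1/j$. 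One must also handle the global behaviour as $j \to \infty$ (the points $1/j$ accumulate at $0$): if even one $j$ is bad then $X_\beta$ already fails to be a graph, and if every $j$ is good then only finitely much is happening near each $1/j$ and near $0$ nothing is attached, so $X_\beta \cong G_0$. Continuity of $\beta \mapsto X_\beta$ in the Vietoris topology is routine from the uniform convergence of the finite approximations. Then $X_\beta \in H(\mathcal{C}) \iff \beta \in P_3$, so $H(\mathcal{C})$ is $\Pi_3$--hard.

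The main obstacle is getting the reduction ``shape'' exactly right: I need the bad case ($\beta \notin P_3$) to produce a continuum that is \emph{provably not homeomorphic to any graph whatsoever} (not merely not homeomorphic to $G_0$), since $H(\mathcal{C})$ could a priori contain other graphs if $\mathcal{C}$ is large --- but this is automatic because every member of $\mathcal{C}$ \emph{is} a graph, so it suffices that $X_\beta$ fail to be a graph, e.g.\ by having infinitely many branch points or a branch point of infinite order, which the spiking construction arranges. A secondary technical point is making the good-case space honestly homeomorphic to $G_0$ rather than to $G_0$-with-finitely-many-whiskers; the cleanest fix is to design the spikes so that ``good at $j$'' means the $j$-th spike is literally degenerate (a point), which I can do by letting the $j$-th attachment be a disk of radius $r_j(\beta) = \limsup$-type quantity that is $0$ exactly when $\beta(j,\cdot)$ is eventually $0$. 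With that, the forward implication is immediate and the reverse is the contrapositive. I expect the branch-point bookkeeping --- confirming both that the upper-bound formula is $\Pi_3$ and that the hardness construction lands outside the graphs in the bad case --- to absorb essentially all of the real work.
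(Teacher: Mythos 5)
Both halves of your proposal have genuine gaps. For the upper bound, you assert that ``$X$ is a graph'' is $\Pi_3$ and that ``$X$ is homeomorphic to a fixed graph $G$'' is $\Sigma_3$, but these are precisely the nontrivial complexity bounds the lemma requires, and neither follows from the quantifier-counting you sketch: homeomorphism is in general an analytic condition, not obviously Borel of low rank, and the ``finitely many branch points'' clause is an $\exists(\text{finite set})\forall$ statement that does not automatically rewrite as $\Pi_3$. The paper obtains these bounds from specific external results: for a graph $G$ the class $\mathcal{L}_G$ of $G$--like continua is $\Pi_2$ (Camerlo--Darji--Marcone, Cor.~5.4), a Peano continuum is $G$--like iff it is obtained from $G$ by collapsing finitely many disjoint connected subgraphs (so up to homeomorphism only finitely many graphs are $G$--like), and for graphs homeomorphism coincides with mutual likeness (Kato--Ye). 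This yields $H(\mathcal{C})=\mathcal{P}\cap\bigcup_m\bigl(\mathcal{L}_{G_m}\setminus\bigcup_{i=1}^{k_m}\mathcal{L}_{G_{m,i}}\bigr)$, where the $\Pi_3$ factor is the class $\mathcal{P}$ of Peano continua --- not ``the class of graphs'', whose $\Pi_3$--ness you would separately have to prove. Without a substitute for this machinery your upper bound is an assertion rather than an argument.

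For the lower bound, the overall shape ($\beta\in P_3\Rightarrow X_\beta\cong G_0$; $\beta\notin P_3\Rightarrow X_\beta$ not a graph) is correct, but the gadget fails in both directions and your ``cleanest fix'' is impossible. A spike of length $\sum_k\beta(j,k)2^{-k}$ is a nondegenerate terminal arc whenever $\beta(j,\cdot)$ is not identically zero, so for $\beta\in P_3$ you typically get infinitely many whiskers accumulating at an endpoint of the chosen edge --- not a graph, hence not in $H(\mathcal{C})$ and the forward implication fails; conversely a single arc of positive length does not ``accumulate on the edge'' when $\beta(j,\cdot)$ has infinitely many ones, so the bad case need not leave the class of graphs. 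The repair you propose --- a continuous radius $r_j(\beta)$ vanishing exactly when $\beta(j,\cdot)$ is eventually zero --- cannot exist, since the zero set of a continuous function is closed while the eventually-zero sequences form a dense, non-closed subset of $2^{\mathbb{N}}$. The workable gadget (used in the paper's own Proposition~\ref{n_c} for exactly this $P_3$ factor) replaces the whisker by a zigzag detour \emph{inside} the edge whose turns occur at heights converging to the edge's endpoint: finitely many ones gives a polygonal arc absorbed into the edge, so the good case is literally $G_0$, while infinitely many ones produces a topologist's sine curve. The paper sidesteps the whole construction in Lemma~\ref{between} by citing the $\Pi_3$--hardness of homeomorphism classes from Theorem~7.3 of Camerlo--Darji--Marcone.
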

 
\begin{proof} That $H(\mathcal{C})$ is $\Pi_3$--hard is immediate from Theorem~7.3 of \cite{cdm}. It remains to show it is in $D_2(\Sigma_3)$.

For spaces $X$ and $Y$, write $X \le Y$ if $X$ is $Y$--like,  $X < Y$ if $X \le Y$ but $Y \not \le X$ and $X \sim Y$ if $X \le Y$ and $Y \le X$. Further write $\mathcal{L}_X = \{ Y : Y \le Y\}$ and $Q(X)=\{ Y : X \sim Y\}$.

Let $\mathcal{C}_0$ be a maximal family of pairwise nonhomeomorphic members of $\mathcal{C}$.
Up to homeomorphism there are only countably many graphs. So enumerate $\mathcal{C}_0=\{G_m : m \in \mathbb{N}\}$. According to Theorem~1.7 of \cite{cdm}, for a graph $G$ and Peano continuum, $P$,  we have that $P$ is $G$--like if and only if $P$ is a graph obtained from $G$ by identifying to  points disjoint (connected) subgraphs. For a fixed graph $G$, then, there are, up to homeomorphism, only finitely many $G$--like graphs. For each $G_m$ in $\mathcal{C}$ pick graphs $G_{m,i}$ for $i=1, \ldots , k_m$ such that each $G_{m,i}$ is $<G$, and if $G'$ is a graph such that $G' < G$ then for some $i$ we have $H(G')=H(G_{m,i})$.

For a graph $G$, $H(G)=Q(G)$ (\cite{KaYe}). Hence, writing $\mathcal{P}$, for the class of Peano continua, we have that $H(\mathcal{C}) = \bigcup_m Q(G_m) = \mathcal{P} \cap \left( \bigcup_m R_{m} \right)$, where $R_{m}=\mathcal{L}_{G_m} \setminus \bigcup_{i=1}^{k_m} \mathcal{L}_{G_{m,i}} = \mathcal{L}_{G_m} \cap \left( C(I^N) \setminus \bigcup_{i=1}^{k_m} \mathcal{L}_{G_{m,i}}\right)$.

By Corollary~5.4 of \cite{cdm}, for a graph $G$, the set $\mathcal{L}_G$ is $\Pi_2$. Hence each $R_{m}$, as the intersection of a $\Pi_2$ and a $\Sigma_2$, is $\Sigma_3$, and so is their countable union. Since $\mathcal{P}$ is $\Pi_3$, we see that $H(\mathcal{C})$ is indeed the intersection of a $\Pi_3$ set and a $\Sigma_3$ set. 
\end{proof}

\begin{lemma}\label{omega_c} The set $AC_\omega$ of all subcontinua of $I^N$ which are $\omega$--ac graphs is $\Pi_3$--complete.
\end{lemma}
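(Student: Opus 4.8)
The plan is to show $AC_\omega$ is both $\Pi_3$-hard and in $\Pi_3$, these two facts combining to give $\Pi_3$-completeness. Hardness is immediate: by Theorem~\ref{main1} a graph is $\omega$-ac if and only if it is homeomorphic to one of the six graphs listed there (arc, simple closed curve, figure eight, lollipop, dumbbell, theta curve), so $AC_\omega = H(\mathcal{C})$ where $\mathcal{C}$ is this finite list of graphs. By Lemma~\ref{between}, $H(\mathcal{C})$ is $\Pi_3$-hard, so $AC_\omega$ is $\Pi_3$-hard.

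It remains to place $AC_\omega$ in $\Pi_3$. Since $AC_\omega = H(\mathcal{C})$, one way is to reexamine the $D_2(\Sigma_3)$ bound from Lemma~\ref{between} and argue that for this particular $\mathcal{C}$ the $\Sigma_3$ factor is unnecessary. Concretely, from that proof $H(\mathcal{C}) = \mathcal{P} \cap \bigl(\bigcup_m R_m\bigr)$ with $\mathcal{P}$ (the Peano continua) being $\Pi_3$ and each $R_m = \mathcal{L}_{G_m} \setminus \bigcup_{i} \mathcal{L}_{G_{m,i}}$ a $\Sigma_3$ set. For the six graphs in $\mathcal{C}$ each is $G$-like only by the graph itself, so the subtracted terms $\mathcal{L}_{G_{m,i}}$ are absent; hence $\bigcup_m R_m = \bigcup_m \mathcal{L}_{G_m}$ is a countable union of $\Pi_2$ sets, which is $\Sigma_3$. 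That alone still only gives $D_2(\Sigma_3)$, so this route needs an extra idea.

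The cleaner approach is to describe $\omega$-ac directly by a $\forall\exists\forall$ formula on $C(I^N)$, mirroring the proof that the Peano-continuum condition is $\Pi_3$ and that arcwise-connectedness-type properties sit low in the hierarchy (as in \cite{cdm}). A continuum $X \in C(I^N)$ is an $\omega$-ac graph iff: (a) $X$ is a graph (a $\Pi_3$, in fact arguably simpler, condition — being a Peano continuum that is locally a finite union of arcs — quotable from \cite{cdm}), and (b) for every finite set $F \subseteq X$ there is an arc $A \subseteq X$ with $F \subseteq A$. Condition (b) is where care is needed: ``there exists an arc $A$'' is naively a $\Sigma$-quantifier over subcontinua, pushing complexity up, so instead I would use the characterization: $X$ is $\omega$-ac iff $X$ is a Peano continuum which is the continuous injective image of a subinterval of $\mathbb{R}$ (Theorem~\ref{main1}), and express ``continuous injective image of an interval'' via a $\forall\exists\forall$ scheme. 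Alternatively, and most economically, since $\mathcal{C}$ is a finite list, write $AC_\omega = \bigcup_{G \in \mathcal{C}} H(G)$ and show each $H(G)$ is $\Pi_3$: a graph $Y$ is homeomorphic to the fixed graph $G$ iff $Y$ is $G$-like ($\mathcal{L}_G$, which is $\Pi_2$ by Corollary~5.4 of \cite{cdm}) and $Y$ is not $G'$-like for any of the finitely many graphs $G'$ that are proper quotients producing $G$-like images of lower complexity — but for the six graphs in $\mathcal{C}$ no such proper $G' < G$ exists (each is minimal in its $G$-likeness class among graphs, by the structure in Theorem~1.7 of \cite{cdm}), so $H(G) = \mathcal{L}_G \cap \mathcal{P}$, the intersection of a $\Pi_2$ set and the $\Pi_3$ set $\mathcal{P}$, hence $\Pi_3$; a finite union of $\Pi_3$ sets is $\Pi_3$.

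The main obstacle is verifying the claim in the last step that for each of the six graphs $G \in \mathcal{C}$, there is no graph $G' \not\cong G$ with $G' < G$ and $H(G') $ needed in the representation — i.e. that each of these six graphs is its own unique ``minimal'' representative in the sense of \cite{cdm}'s quotient description of $G$-likeness, so the subtracted $\Sigma_2$ terms vanish and no $\Sigma_3$ component survives. This is a finite check using Theorem~1.7 of \cite{cdm} (identifying to points disjoint connected subgraphs): one runs through arc, circle, figure eight, lollipop, dumbbell, theta and confirms that collapsing any nontrivial subgraph either changes the homeomorphism type out of $\mathcal{C}$ or returns the same graph, and that no smaller graph is $G$-like. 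I expect this to be routine but the place where one must be most careful, since an overlooked proper $G'$-like graph would leave a genuine $\Sigma_3$ obstruction and only yield the weaker $D_2(\Sigma_3)$ bound already given by Lemma~\ref{between}.
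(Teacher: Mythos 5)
Your hardness half is correct and matches the paper: $AC_\omega=H(\mathcal{C})$ for the finite list $\mathcal{C}$ of Theorem~\ref{main1}, so Lemma~\ref{between} gives $\Pi_3$--hardness. The membership half, however, rests on a false claim. You assert that for each of the six graphs $G\in\mathcal{C}$ there is no graph $G'$ with $G'<G$, so that the subtracted sets $\mathcal{L}_{G_{m,i}}$ vanish and $H(G)=\mathcal{L}_G\cap\mathcal{P}$. By the quoted Theorem~1.7 of \cite{cdm}, the $G$--like Peano continua are exactly the quotients of $G$ obtained by collapsing disjoint connected subgraphs, and every one of the six graphs has nontrivial such quotients of strictly lower type: the point is $<$ the arc and $<$ the circle; collapsing the stick (resp.\ the loop) of the lollipop gives the circle (resp.\ the arc); collapsing one loop of the figure eight gives the circle; collapsing the bar of the dumbbell, or one edge of the theta curve, gives the figure eight. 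So the ``routine finite check'' you defer to would in fact refute your claim, and as written $H(G)\neq\mathcal{L}_G\cap\mathcal{P}$ (for instance $\mathcal{L}_{\text{figure eight}}\cap\mathcal{P}$ contains all circles). Your other two routes are not carried out: the first you concede only yields $D_2(\Sigma_3)$, and the second (a direct $\forall\exists\forall$ formula for ``$\omega$--ac graph'') is only sketched.

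The repair is short and is what the paper does: you do not need the subtracted terms to vanish, only that everything in sight is finite. For a single graph $G$ there are only finitely many graphs $G_1,\dots,G_k$ with $G_i<G$, so
$H(G)=\mathcal{P}\cap\mathcal{L}_G\cap\bigcap_{i=1}^{k}\bigl(C(I^N)\setminus\mathcal{L}_{G_i}\bigr)$
is a finite intersection of a $\Pi_3$ set, a $\Pi_2$ set and finitely many $\Sigma_2$ sets; since $\Pi_2\cup\Sigma_2\subseteq\Pi_3$ and $\Pi_3$ is closed under countable intersections, $H(G)$ is $\Pi_3$. The obstruction in Lemma~\ref{between} to getting $\Pi_3$ was only the \emph{countable} union over $m$; here $\mathcal{C}$ has six members, and a finite union of $\Pi_3$ sets is $\Pi_3$, so $AC_\omega=\bigcup_{G\in\mathcal{C}}H(G)$ is $\Pi_3$ and hence $\Pi_3$--complete. (Note also that even your own intermediate computation went astray on this point: with $\mathcal{C}$ finite, $\bigcup_m\mathcal{L}_{G_m}$ is a finite union of $\Pi_2$ sets, hence $\Pi_2$, not merely $\Sigma_3$.)
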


\begin{proof} For a graph $G$, $H(G)$ is $\Pi_3$. By Theorem~\ref{main1}, $AC_{\omega}$ is a finite union of $H(G)$ for graphs $G$, and so is also $\Pi_3$. Hence by Lemma~\ref{between}, $AC_\omega$ is $\Pi_3$--complete.
\end{proof}

\begin{proposition}\label{n_c}
For any $n$, let $AC_n$ be the set of subcontinua of $I^N$ which are $n$--ac but not $(n+1)$--ac graphs. Then for $n=2,3,4$ and $5$ the sets $AC_n$ are $D_2(\Sigma_3)$--complete.
\end{proposition}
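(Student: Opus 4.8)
The plan is to prove $D_2(\Sigma_3)$-completeness of $AC_n$ for $n = 2,3,4,5$ in two halves: membership in $D_2(\Sigma_3)$, which is cheap, and $D_2(\Sigma_3)$-hardness, which is the real work. For membership, observe that by Theorem~\ref{main1} (or more precisely its consequences, since $AC_n$ consists of graphs that are $n$-ac but fail to be $7$-ac for $n \le 5$) there are, up to homeomorphism, only finitely many graphs which are $n$-ac but not $(n+1)$-ac. Indeed Examples~(E) and~(F) together with the structural restrictions from Section~\ref{omegag} bound the number of branch points and their degrees, so a short combinatorial enumeration shows each such family is finite. Hence $AC_n = H(\mathcal{C}_n)$ for a finite (in particular countable) collection $\mathcal{C}_n$ of graphs, and Lemma~\ref{between} immediately gives $AC_n \in D_2(\Sigma_3)$ and $AC_n$ is $\Pi_3$-hard. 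So it only remains to upgrade $\Pi_3$-hardness to $D_2(\Sigma_3)$-hardness.

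For hardness, the strategy is to build a single Wadge reduction $f : \left(2^{\mathbb{N}\times\mathbb{N}}\right)^2 \to C(I^N)$ with $f^{-1}(AC_n) = S_3^* \times P_3$, using the fact (cited from \cite{kech}) that $S_3^* \times P_3$ is $D_2(\Sigma_3)$-complete. The idea is to exploit a ``wedge'' or disjoint-union-type construction: given $(\alpha,\beta)$, produce a continuum $X_{\alpha,\beta}$ that, near one designated point, codes whether $\alpha \in S_3^*$ by being (homeomorphic to) a fixed target graph $G$ that is $n$-ac not $(n+1)$-ac when $\alpha \in S_3^*$ and collapses to something strictly below $G$ (hence not $n$-ac, or $n$-ac for smaller index) when $\alpha \notin S_3^*$, while near another designated point it codes whether $\beta \in P_3$ by degenerating to an arc-like/simple-closed-curve-like piece when $\beta \notin P_3$ that would push the whole space out of the $n$-ac class. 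Concretely, one recycles the $\Pi_3$-hardness reduction of \cite{cdm} / \cite{cdm}'s Theorem~7.3 for one coordinate and composes it with a $\Sigma_3$-hardness gadget for the other, arranging the two gadgets to sit inside $I^N$ at a common branch point so that the resulting graph's homeomorphism type is exactly the desired one precisely on $S_3^* \times P_3$. The key point is that the two coding regions must interact in the right Boolean way: membership in $AC_n$ should require the ``$\Sigma_3$-side'' to succeed (giving the $n$-ac-ness, an $\exists\forall\exists$ condition) \emph{and} the ``$\Pi_3$-side'' to succeed (preventing an extra branch point or high-degree vertex that would either destroy $n$-ac-ness or, worse, make it $(n+1)$-ac).

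The main obstacle I expect is the simultaneous control of both the lower bound (graph is $n$-ac) and the upper bound (graph is \emph{not} $(n+1)$-ac) along a single continuously-varying family, while keeping the construction inside a fixed $I^N$ and keeping the set-valued map $(\alpha,\beta)\mapsto X_{\alpha,\beta}$ Vietoris-continuous. One delicate issue is that "$n$-ac but not $(n+1)$-ac" is not monotone in the graph in any obvious way, so the gadget producing the graph must be engineered so that the generic (``limit'') fiber is the exact minimal example from Examples~(E)/(F) and the two kinds of degeneration move the fiber into complementary directions of the $n$-ac spectrum. I would handle this by fixing the minimal $n$-ac-not-$(n+1)$-ac graph $G_n$ (the simple triod for $n=2$, and graphs (a),(b),(c) of Example~(E) for $n=3,4,5$), attaching a ``$\Sigma_3$-hair'' whose presence is forced exactly when $\alpha \in S_3^*$ and whose absence drops the arc-connectivity index below $n$, and attaching a ``$\Pi_3$-hair'' whose collapse to a degenerate arc is forced when $\beta \notin P_3$ and which when non-collapsed would (via an added identification) create a third branch point, pushing the graph out of $AC_n$ in the other direction. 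Verifying continuity of the hyperspace-valued map and checking the homeomorphism type of every fiber against the finite list from the membership step will be the bulk of the routine-but-lengthy part of the argument.

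\begin{proof}
By Theorem~\ref{main1}, every graph that is $n$-ac for $n \le 6$ but not $7$-ac has at most two branch points, each of degree at most $4$ (Theorem~\ref{no7wac}, Corollary~\ref{nodegree5n}, Corollary~\ref{no24}); a direct enumeration then shows that for each fixed $n \in \{2,3,4,5\}$ there are, up to homeomorphism, only finitely many graphs which are $n$-ac but not $(n+1)$-ac. Fix such a finite family $\mathcal{C}_n$, so that $AC_n = H(\mathcal{C}_n)$. By Lemma~\ref{between}, $AC_n \in D_2(\Sigma_3)$ and $AC_n$ is $\Pi_3$-hard.

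It remains to show $AC_n$ is $D_2(\Sigma_3)$-hard. Since $S_3^* \times P_3$ is $D_2(\Sigma_3)$-complete in $\left( 2^{\mathbb{N}\times\mathbb{N}}\right)^2$, it suffices to construct a continuous map $f : \left(2^{\mathbb{N}\times\mathbb{N}}\right)^2 \to C(I^N)$ with $f^{-1}(AC_n)=S_3^* \times P_3$. Let $G_n$ be the minimal graph which is $n$-ac but not $(n+1)$-ac (the simple triod for $n=2$, and the graphs (a), (b), (c) of Example~(E) for $n=3,4,5$). We build $X_{\alpha,\beta} = f(\alpha,\beta)$ by attaching to $G_n$, at two distinct interior points, two ``coding hairs'': a $\Sigma_3$-hair $H^{\Sigma}_\alpha$ and a $\Pi_3$-hair $H^{\Pi}_\beta$, realized as continuously varying subcontinua of coordinate cubes in $I^N$, using the hyperspace constructions underlying the $\Sigma_3$- and $\Pi_3$-hardness of $S_3^*$ and $P_3$ (cf. \cite{kech}, \cite{cdm}). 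The hairs are arranged so that: (a) $H^{\Sigma}_\alpha$ is (homeomorphic to) an arc attached at one endpoint when $\alpha \in S_3^*$ and degenerates to a point otherwise, and the resulting identification changes which of the finitely many graphs in $\mathcal{C}_n \cup \{\text{subgraphs}\}$ is obtained in such a way that $X_{\alpha,\beta}$ is $n$-ac precisely when $\alpha \in S_3^*$; (b) $H^{\Pi}_\beta$ carries an identification that creates an additional branch point (pushing $X_{\alpha,\beta}$ out of $AC_n$) precisely when $\beta \notin P_3$, and collapses harmlessly otherwise.

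Granting the construction, a point $(\alpha,\beta)$ lies in $f^{-1}(AC_n)$ iff $X_{\alpha,\beta}$ is $n$-ac (which, by (a), happens iff $\alpha \in S_3^*$) and $X_{\alpha,\beta}$ is not $(n+1)$-ac and has the correct branch structure (which, by (b) and the fact that $G_n$ itself is not $(n+1)$-ac, happens iff $\beta \in P_3$). Hence $f^{-1}(AC_n) = S_3^* \times P_3$. Continuity of $f$ in the Vietoris topology follows because each hair varies continuously in $C(I^N)$ and attaching/identifying along fixed closed sets is a continuous operation on the hyperspace. Checking that every fiber $X_{\alpha,\beta}$ is indeed (homeomorphic to) one of the finitely many graphs in the master list, and that the $n$-ac status is as claimed in (a) and (b), is a finite verification over the list obtained in the first paragraph. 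This establishes $D_2(\Sigma_3)$-hardness, and combined with $AC_n \in D_2(\Sigma_3)$ yields $D_2(\Sigma_3)$-completeness of $AC_n$ for $n = 2,3,4,5$.
\end{proof}
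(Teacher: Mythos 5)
Your overall architecture (membership via Lemma~\ref{between}, hardness via a Wadge reduction from $S_3^*\times P_3$) matches the paper, but there are two genuine problems. First, your finiteness claim is false: for each $n\in\{2,3,4,5\}$ there are infinitely many homeomorphism types of $n$--ac not $(n+1)$--ac graphs. For $n=2$ the simple $k$--ods ($k\ge 3$) are all $2$--ac but not $3$--ac and pairwise non--homeomorphic, and for $n=5$ the paper's own hardness construction exhibits an infinite family $K_J$ of pairwise non--homeomorphic $5$--ac not $6$--ac graphs. (The structural restrictions you cite --- at most two branch points, degree at most $4$ --- are consequences of being $7$--ac or $5$--ac respectively, and do not bound the graphs in $AC_n$.) This error is harmless for the membership half, since Lemma~\ref{between} applies to an arbitrary collection of graphs, but it undermines your later ``finite verification over the master list.''

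Second, and fatally, the coding mechanism in your hardness gadgets cannot be realized by a continuous map. You ask for a hair $H^{\Sigma}_\alpha$ that is a nondegenerate arc when $\alpha\in S_3^*$ and ``degenerates to a point otherwise.'' The set of degenerate continua is closed in $C(I^N)$, so $\{\alpha: H^{\Sigma}_\alpha \text{ is a point}\}$ would be the preimage of a closed set under a continuous map, hence closed; but you need it to equal the complement of $S_3^*$, which is $\Pi_3$--complete and in particular not closed. The same objection applies to a $\Pi_3$--hair that ``creates an additional branch point precisely when $\beta\notin P_3$'': whether a fixed segment or identification is present is a clopen-level condition and cannot encode a $\Sigma_3$ or $\Pi_3$ predicate. (For $n=2$ there is the further obstruction that no connected graph can fail to be $2$--ac, so the hair's absence cannot ``drop the index below $n$.'') The paper avoids all of this by using infinitely many gadgets accumulating at a point: when the $\alpha$--condition fails the limit set acquires infinitely many branch points, and when the $\beta$--condition fails it contains a topologist's sine curve --- in either case $F(\alpha,\beta)$ fails to be a graph at all, hence is not in $AC_n$, while on $S_3^*\times P_3$ the output is one of the graphs $K_J$ (with modifications by added terminal segments for $n=4,3,2$), each of which is $n$--ac but not $(n+1)$--ac. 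Your proposal would need to be rebuilt along these lines; as written the central step does not go through.
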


\begin{proof} According to Lemma~\ref{between} $AC_n$ is  $D_2(\Sigma_3)$, so it suffices to show that $AC_n$ is $D_2(\Sigma_3)$--hard.

To show that $AC_n$ is $D_2(\Sigma_3)$--hard it suffices to show that there is a continuous map $F: \left( 2^{\mathbb{N}\times \mathbb{N}} \right)^2  \to C(I^N)$ such that $F^{-1} (AC_n) = S_3^* \times P_3$. We do the construction for $N=2$. Since $\mathbb{R}^2$ embeds naturally in general $\mathbb{R}^N$, the proof obviously extends to all $N\ge2$.

We first do the case when $n=5$. Then we will explain how to make the minor modifications needed for the other cases, $n=2,3$ and $4$.

For $x, y$ in $\mathbb{R}^2$, let $\jn{x y}$ be the straight line segment from $x$ to $y$. Set $O=(0,0)$, $T=(3,1)$, $B_1=(1,0)$, $B_2=(4/3,0)$, $B_3=(5/3,0)$, $B_4=(2,0)$ and $T_1=(1,1)$, $T_2=(4/3,1)$, $T_3=(5/3,1)$, $T_4=(2,1)$. Let $K_0=\jn{O B_4} \cup \jn{B_4 T} \cup \jn{T T_1} \cup \jn{T_1 O} \cup \jn{B_2 T_2} \cup \jn{B_3 T_3}$. Then $K_0$ is a $5$--ac not $6$--ac graph. Define $b_j=(1/j,0)$, $t_j=(1/j,1/j)$, $t_j^k=(1/j,1/j-1/(kj))$ and $s_j^k=(1/j-1/(kj(j+1)),0)$. Then  $K_J = K_0 \cup \bigcup_{j=1}^J \jn{b_j t_j}$ --- for each $J$ --- is also a $5$--ac not $6$--ac graph. 

 Let $K_0'$ be $K_0$ with the interior of the line from $O$ to $B_1$, and the interior of the line from $T_4$ to $T$, deleted.

We now define $F$ at some $\alpha$ and $\beta$ in $2^{\mathbb{N} \times \mathbb{N}}$. Fix $j$. 
If $\alpha (j,k)=1$ for all $k$, then let $R_j=\jn{b_j t_j} \cup \jn{b_j b_{j+1}}$. Otherwise, let $k_0=\min \{ k : \alpha (j,k)=0\}$, and let $R_j = \jn{b_j t_j^{k_0}} \cup \jn{t_j^{k_0} s_j^{k_0}} \cup \jn{s_j^{k_0} b_{j+1}}$. 

For any $j,k$ set $p_j=3-1/j$, $q_j^k=1-1/(j+k)$, $\ell_j=p_j + (1/8)(p_{j+1} - p_j)$ and $r_j=p_j + (7/8)(p_{j+1}-p_j)$. Fix $j$. Define
\begin{eqnarray*}
S_j &=&  
\jn{ (p_j,1) (p_j, q_j^1)} \cup \jn{(p_j,q_j^1) (\ell_j,q_j^1)} \cup \jn{(\ell_j,1) (p_{j+1},1)}\\ 
&\cup &  \bigcup \{  \jn{(\ell_j,q_j^k) (\ell_j,q_j^{k+1}) }    : \beta (j,k)=0\} \\
& \cup &  \bigcup \{ \jn{(\ell_j,q_j^k) (r_j,q_j^k)} \cup \jn{(r_j,q_j^k) (\ell_j,q_j^{k+1})}     : \beta (j,k)=1\}.
\end{eqnarray*}

Let $F(\alpha,\beta) = K_0' \cup \bigcup_j (R_j \cup S_j)$. Then it is straightforward to check $F$ maps $\left(2^{\mathbb{N} \times \mathbb{N}}\right)^2$ continuously into $C([0,4]^2)$.

\begin{center}
\begin{tikzpicture}[scale=4]
\draw[thick] (1,0) -- (2,0) -- (3,1);
\draw[thick] (2,1) -- (1,1) -- (0,0);
\draw[thick] (4/3,0) -- (4/3,1);
\draw[thick] (5/3,0) -- (5/3,1);

\draw (1,0) -- (1, 1-1/2) -- (1-1/4,0) -- (1/2,0);

\node at (1,1.2) {1};
\node at (1,1.4) {0};
\node at (1,1.6) {1};

\draw (1/2,0) -- (1/2,1/2);
\draw (1/2,0)-- (1/3,0);
\node at (1/2,1.2) {1};
\node at (1/2,1.4) {1};
\node at (1/2,1.6) {1};

\draw (1/3,0) -- (1/3, 1/3-1/9) -- (1/3-1/36,0) -- (1/4,0);
\node at (1/3,1.2) {1};
\node at (1/3,1.4) {1};
\node at (1/3,1.6) {0};


\draw [dotted] (1/4,0) -- (0,0);
\node at (1/6,1.4) [label=above:$j$] {\ldots};
\node at (1/2,1.8) [label=right:$k$] {\vdots};

\node at (1.25,1.4) {$\alpha$};


\node at (1.75,1.4) {$\beta$};

\draw (2,1) -- (2,0.5) -- (2.0625, 0.5);
\draw (2.0625,1) -- (2.5,1);
\draw (2.065,0.5) -- (2.4375,0.5) -- (2.0625,0.6666);
\draw (2.0625,0.6666) -- (2.0625,0.75);
\draw (2.0625,0.75) -- (2.4375,0.75) -- (2.0625,0.8); 
\draw (2.0625,0.8) -- (2.4375,0.8) -- (2.0625,0.83333); 

\draw[dotted] (2.0625,0.83333) -- (2.0625,1);

\node at (2,1.2) {1};
\node at (2,1.4) {0};
\node at (2,1.6) {1};
\node at (2,1.8) {1};

\draw (2.5,1) -- (2.5,0.6666) -- (2.5208325, 0.6666);
\draw (2.5208325,1) -- (2.6666,1);
\draw (2.5208325,0.6666) -- (2.6458275,0.6666) -- (2.5208325,0.75);
\draw (2.5208325,0.75) -- (2.6458275,0.75) -- (2.5208325,0.8);
\draw (2.5208325,0.8) -- (2.6458275,0.8) -- (2.5208325,0.83333); 
\draw (2.5208325,0.83333) -- (2.6458275,0.83333) -- (2.5208325,0.857143); 
\draw (2.5208325,0.857143) -- (2.6458275,0.857143) -- (2.5208325,0.875); 
\draw (2.5208325,0.875) -- (2.6458275,0.875) -- (2.5208325,0.88888); 
\draw (2.5208325,0.88888) -- (2.6458275,0.88888) -- (2.5208325,0.9); 

\draw[dotted] (2.58333,0.9) -- (2.58333,1);

\node at (2.5,1.2) {1};
\node at (2.5,1.4) {1};
\node at (2.5,1.6) {1};
\node at (2.5,1.8) [label=left:$k$] {\vdots};

\draw[very thin] (2.6666,1) -- (2.6666,0.75)--(2.6770775, 0.75);
\draw (2.6770775,1) -- (2.75,1);
\draw[very thin] (2.6770775,0.75) -- (2.7395825,.75) -- (2.6770775,0.8);
\draw[very thin] (2.6770775,0.8) -- (2.6770775,1);

\node at (2.6666,1.2) {1};
\node at (2.6666,1.4) {0};
\node at (2.6666,1.6) {0};

\draw[dotted] (2.6770775,1) -- (3,1);
\node at (2.83333,1.4) [label=above:$j$] {\ldots};

\node at (1.5,1.1) {$F(\alpha, \beta)$};

\end{tikzpicture}
\end{center}

Take any $\alpha$. For any $j$, the set $R_j$ connects the bottom edge $\jn{O B_1}$ with the diagonal edge $\jn{O T_1}$ if  $\alpha(j,k)=1$ for all $k$, and otherwise is an arc from $b_j$ to $b_{j+1}$. Hence $\bigcup_{j>J} R_j$ is a free arc from $B_{J+1}$ to $O$ if $\alpha$ is in $S_3^*$, and otherwise can't be a subspace of a graph (because it contains infinitely many points of order $3$).

Take any $\beta$. For any $j$, $S_j$ is an arc from $(p_j,1)$ to $(p_{j+1},1)$ if $\beta(j,k)=0$ for all but finitely many $k$, but contains a `topologists sine curve' if $\beta(j,k)=1$ for infinitely many $k$. Thus $\bigcup_j S_j$ is a free arc from $T_4$  to $T$ if $\beta$ is in $P_3$, and otherwise can't be a subspace of a graph (because it contains a `topologists sine curve'). 

Hence if $(\alpha,\beta)$ is in $S_3^* \times P_3$,  $F(\alpha,\beta)$ is homeomorphic to some $K_J$, which in turn means it is a graph which is $5$--ac but not $6$--ac. On the other hand, if either $\alpha$ is not in $S_3^*$ or $\beta$ is not in $P_3$, then $F(\alpha,\beta)$ contains subspaces which can't be subspaces of a graph --- and so is not a graph. Thus $F^{-1} (AC_5) = S_3^* \times P_3$ as required.

\smallskip
Let $T_1^+=(1,2)$, $T_2^+=(4/3,2)$ and $T_3^+=(5/3,2)$.
Suppose now that $n=4$. Modify $K_0$ by adding a line segment from $T_1$ to $T_1^+$. Then this modified $K_0$ is $4$--ac but not $3$--ac. Further, for any $J$, the modified $K_J$ obtained by taking the modified $K_0$ as a base is also $4$--ac but not $5$--ac. Thus we get the desired reduction in the case when $n=4$.

Similarly, for $n=3$, modify $K_0$ by adding both a line segment $\jn{T_1 T_1^+}$ and $\jn{T_2 T_2^+}$. This gives a base graph, and family of $K_J$, which are all $3$-ac but not $4$--ac. Finally, by adding the three line segments,  $\jn{T_1 T_1^+}$, $\jn{T_2 T_2^+}$ and $\jn{T_3 T_3^+}$ to $K_0$ we get $2$--ac not $3$--ac graphs. The  desired reductions for $n=3$ and $n=2$ follow.
\end{proof}

\section{Open Problems} \label{probs}

The main theorems, Theorem~\ref{main1}, \ref{main2} and~\ref{main3}, raise some natural problems.

\begin{problems} \ 

\begin{itemize}
\item Find examples of continua which are $n$--ac but not $(n+1)$--ac for $n \ge 7$. 

Theorem~\ref{main1} implies that no graph is an example. Are there regular examples?

\item Find a `simple' (i.e. $\Pi_3$) characterization of $6$--ac graphs which are not $7$--ac. Alternatively,  prove that no such characterization is possible, and show that the set of $6$--ac not $7$--ac graphs is $D_2(\Sigma_3)$.

Note (Examples~\ref{exs} (G)) that there are infinitely many  $6$--ac not $7$--ac graphs --- rather than the only finite family of $7$--ac graphs --- but this does not rule out a `simple' characterization.

\item Characterize the $\omega$--ac regular continua.

The Sierpinski triangle is a regular $\omega$--ac continuum. The authors, with Kovan--Bakan, show in \cite{egkm} that there is no Borel characterization of {\it rational} $\omega$--ac continua. However the examples used in that argument are far from regular (not even locally connected).
\end{itemize}
\end{problems}

\section*{Acknowledgements} The authors thank the referee for his/her comments that improved the
 paper and for observing that Kuratowski's $K_{3,3}$ graph is $6$-ac but not $7$-ac
 (see Example \ref{exs} (F)).

\end{document}